\newcommand\scalemath[2]{\scalebox{#1}{\mbox{\ensuremath{\displaystyle #2}}}}
\theoremstyle{definition}
\newtheorem{lemma}{Lemma}[section]
\newtheorem{prop}[lemma]{Proposition}
\newtheorem{proposition}[lemma]{Proposition}
\newtheorem{thm}[lemma]{Theorem}
\newtheorem{theorem}[lemma]{Theorem}
\newtheorem{de}[lemma]{Definition}
\newtheorem{example}[lemma]{Example}
\newtheorem{remark}[lemma]{Remark}
\newtheorem*{thm*}{Theorem}
\newtheorem*{cor*}{Corollary}
\newcommand{\CC}{{\mathbb {C}}}
\newcommand{\ZZ}{{\mathbb {Z}}}
\newcommand{\ch}{{\operatorname{ch}}}
\DeclareMathOperator{\Gr}{Gr}
\begin{document}

\title{Combinatorial model for $m$-cluster categories in type $E$}

\author[Bing Duan, Lisa Lamberti, Jian-Rong Li]{Bing Duan, Lisa Lamberti, Jian-Rong Li}

\address{Bing Duan, School of Mathematics and Statistics, Lanzhou University, Lanzhou, China}
\email{duan890818@163.com}
\address{Lisa Lamberti, Department of Biosystems Science and Engineering, ETH Z\"{u}rich, Basel, Switzerland; SIB Swiss Institute of Bioinformatics, Basel Switzerland}
\email{lisa.lamberti@bsse.ethz.ch}
\address{Jian-Rong Li, Institute of Mathematics and Scientific Computing, University of Graz, Graz, Austria}
\email{lijr07@gmail.com}

\begin{abstract}
In this paper, we revisit the geometric description of 
cluster categories in type $E$ in terms of 
colored diagonals in a polygon given in \cite{L3}.
We then explain how the model generalizes to 
$m$-cluster categories of the same type. As an application,
we relate colored diagonals in a polygon 
to semi-standard Young tableaux, in type $E_6,E_7,E_8$.
This provides a new compatibility description of semi--standard
Young tableaux in Grassmannian cluster algebras in type
 $E_6, E_8$ and in a sub-cluster algebra of type $E_7$.
\end{abstract}

\maketitle
\tableofcontents

\addtocontents{toc}{\setcounter{tocdepth}{1}}%

\section{Introduction}

Understanding combinatorial patterns governing 
colored almost positive roots, i.e.~copies of positive roots  together with 
negative simple roots, in root systems is an interesting problem in the 
theory of cluster algebras. This problem has a wide range of implications, 
for instance, finding such descriptions allows to approximate 
similar patterns in larger root systems.
The patterns we are referring to, are the ones arising in generalized cluster complexes and Coxeter combinatorics, or in $m$-cluster categories (also called higher cluster categories) explored in a series of papers, see for example \cite{FR,Th07,Zhu08,BM,BMd}.
Despite various advancements in the field, to describe the usually infinite collection of maximally compatible sets of almost positive roots, resp.~ of colored almost positive roots in general remains a difficult task, well understood only for classical type root systems. 

In this paper, we make some progress in
extending to the exceptional root systems in type $E$ and $F$, 
combinatorial results obtained for the classical types $ABCD$.
Specifically, in the first part (Section 3 and 4) we explore the link between colored almost positive
roots in type $E$ and colored oriented $m$-diagonals in polygons. 
In this way, we generalize to $m$-cluster categories of type $E$ work of Thomas \cite{Th07} and Baur--Marsh \cite{BM}, 
relating $m$-diagonals in a polytope to $m$-cluster categories of type $A$,
and work of Lamberti \cite{L3} describing cluster 
categories of type $E$ in similar geometric terms. 

In the second part, we revisit the relatively simple geometric model of colored diagonals suggested in \cite{L3} and related it to Young tableaux models. More specifically, Jensen, King, and Su in \cite{JKS} gave a description of cluster variables in types $E_6, E_8$ using some combinatorial objects called profiles. We  describe cluster variables in types $E_6, E_7, E_8$ using semi-standard Young tableaux, see Section \ref{sec:tableaux and colored oriented diagonals}. In particular, we present how the geometric model can be used to deduce compatibility of tableaux (Section 5 together with Section 6). Compatibility for almost positive roots is called cluster adjacency in physics, \cite{DFG, LPSV, MSSV} and has important applications to the theory of scattering amplitude in physics. 

\subsection*{Acknowledgements}

B.~Duan is supported by the National Natural Science Foundation of China (no. 11771191). L.~Lamberti would like to thank B.~Zhu for a question asked at the Korean Institute for Advanced Studies during a Conference on Cluster Algebras and Representation Theory in 2014 leading to the geometric model for $m$-cluster categories presented in Section \ref{se:geo2}. J.-R.~Li is supported by the Austrian Science Fund (FWF): M 2633-N32 Meitner Program.

\section{Preliminaries on $m$-cluster categories and Grassmannian cluster algebras}

In this section we define orbit categories of 
representations of valued quivers following \cite{Zhu08} closely.
In this work, however, we will mostly focus on 
representations of ordinary simply-laced Dynkin quivers in types $E_6, E_7$ and $E_8$. 
Results for $m$-cluster categories in type $F_4$ are 
deduced by a folding argument.

\subsection{Definition of $m$-cluster categories}\label{se:rep}
Let $(G,d)$ be a valued graph. That is, 
a finite set of vertices $[n]=\{1, \ldots, n\}$ together with nonnegative integers $d_{ij}$ for all pairs of vertices
$i,j\in [n]\times[n]$ such that $d_{ii}=0$ and positive integers $\{\epsilon_i\}_{i\in G}$ satisfying
\[
d_{ij}\epsilon_j=d_{ji}\epsilon_i 
\]
for all $i,j\in [n]$.
An edge in $(G,d)$ is a pair $\{i,j\}$ with $d_{ij}\neq0$. An orientation $\Sigma$ of $(G,d)$ is given by 
assigning to each edge in $(G,d)$ an order. Denote an oriented valued quiver by
$(G,\Sigma)$. Throughout it is assumed that $(G,\Sigma)$ has no oriented cycles. 
Let $\Phi$ be the root system of the Kac--Moody Lie algebra corresponding to $G$. 
If for each arrow in $(G,\Sigma)$ one has that $\epsilon_j=\epsilon_i$,
$(G,\Sigma)$ is an ordinary quiver, simply denoted by $Q$.

Let $k$ be an algebraically closed field and let $M=(F_i,{}_iM_j)_{i,j\in G}$ be a reduced $k$-species of 
$(G,\Sigma)$. That is, for all $i,j\in [n]$,
${}_iM_j$ is an $F_i-F_j$-bimodule, where $F_i$ and $F_j$
are division rings which are finite dimensional vector spaces over $k$ and $\mathrm{dim}({}_iM_j)=d_{i,j}$ and $\dim_k F_i=\epsilon_i$. Let the category of finite-dimensional
representations of $(G,\Sigma, M)$ be denoted by
$\mathcal{H}$. Let $\mathcal{D}:=\mathcal{D}(\mathcal{H})$ 
be the bounded derived category of
the abelian category $\mathcal{H}$ 
endowed with shift functor
$\Sigma:\mathcal{D}\rightarrow\mathcal{D}$ and 
Auslander--Reiten translation $\tau:\mathcal{D}\rightarrow \mathcal{D}$,
defined as $\mathrm{Hom}_{\mathcal{D}}(X,-)^*\cong\mathrm{Hom}_{\mathcal{D}}(-,\Sigma\tau X),$ for all $X\in\mathcal{D}$. 

For $m\in\mathbb Z_{\geq1}$, define the $m$-cluster category of type $\Phi$ as the orbit category:
\[
\mathcal{C}^m_{\Phi}=\mathcal{D}/\langle \tau^{-1}\Sigma^{m} \rangle.
\]
Objects are the $(\tau^{-1}\Sigma^{m})$-orbits of objects in $\mathcal{D}$ and morphisms are defined by
\[
\mathrm{Hom}_{\mathcal{C}^m_{\Phi}}(\widetilde{X},\widetilde{Y})=\bigoplus_{i\in \mathbb Z}
\mathrm{Hom}_{\mathcal{D}}\Big((\tau^{-1}\Sigma^{m})^iX,Y\Big)
\]
where $\widetilde{X}$ and $\widetilde{Y}$
are representatives of the  $\tau^{-1}\Sigma^{m}$-orbits of $X$ and $Y$ respectively.
In the above, $\Sigma^{m}$ denotes the composition of $\Sigma$ with itself $m$-times.
It is known that $ \mathcal{C}^m_{\Phi}$ is again a triangulated category, \cite[Theorem 1]{Kell08}, with shift $\Sigma$ and Serre functor $\tau\Sigma$  induced from $\mathcal{D}$. Moreover, $\mathcal{C}^m_{\Phi}$ is Krull-Schmidt with finite dimensional $\mathrm{Hom}$-spaces and $(m+1)$-Calabi Yau, see for instance \cite[Prop 2.2]{Zhu08}.

\subsection{Isomorphisms of stable translation quivers}

A quiver $\Gamma$ without loops nor multiple edges, together with a bijective map $\tau:\Gamma\rightarrow\Gamma$, is a
stable translation quiver $(\Gamma,\tau)$, in the sense of \cite{Riedtmann},
if $\tau$ is such that for all vertices $x$ in $\Gamma$
the set of starting points of arrows which end
in $x$ is equal to the set of end points of arrows which start at $\tau(x)$. The map $\tau$
is called translation. For a stable translation quiver
 $(\Gamma,\tau)$ one defines the mesh category of $(\Gamma,\tau)$
as the quotient category of the additive path category of $\Gamma$ by the mesh ideal,
see \cite{K2} for details on this construction.

Let $Q$ be an ordinary quiver. Let $(\mathbb Z Q,\tau)$ be the stable translation quiver given by the repetitive quiver of $Q$, defined as in \cite[I, 5.6]{Happel2}.
Let $\tau: \mathbb Z Q\rightarrow\mathbb Z Q$ be 
defined on the vertices $(n,i)$ of $\mathbb Z Q$ by
$\tau(n,i)=(n-1,i)$, for $n\in\mathbb Z$, $i$ a vertex in $Q$.

We recall, the Auslander--Reiten quiver of any Krull--Schmidt category $\mathcal{K}$ is a
quiver whose vertices are the (isomorphism classes of) indecomposable objects in $\mathcal{K}$.
The number of arrows between two vertices $X$ and $Y$ is given by the
dimension of the space of irreducible morphisms between $X$ and $Y$:
\[
\mathrm{Irr}_{\mathcal K}(X,Y ) := \mathrm{rad}(X, Y)/\mathrm{rad}_2(X,Y).
\]
Here $\mathrm{rad}(X,Y)\subset\mathrm{Hom}_{\mathcal K}(X,Y)$ consists of all non-isomorphisms,
and $\mathrm{rad}_2(X,Y)\subset\mathrm{rad}(X,Y)$ consists
of non-isomorphisms admitting a non-trivial factorization.
Denote the Auslander--Reiten quiver 
of the orbit category $\mathcal{C}^m_{\Phi}$ with 
Auslander--Reiten translation $\tau$ by $(\Gamma_{\Phi}^m,\tau)$.
If $m=1$, the index $m$ will be omitted.

For $Q$ of type $E_6$ let $\rho:\mathbb Z {E_6}\rightarrow\mathbb Z {E_6}$ 
be the order two involution given by the horizontal
reflexion along the center line of $\mathbb Z E_6$.
For $m\in\mathbb Z_{\geq 1},$ let $\rho^m$ be the 
composition of $\rho$ with itself $m$-times.

\begin{lemma}\label{lemma:AR}
Let $Q$ be of type $E_6,E_7,E_8$, then the following are
isomorphisms of stable translation quivers:
\begin{itemize}
\setlength\itemsep{1em}
\item $(\Gamma_{E_6}^{m},\tau)\cong (\mathbb Z E_6/\tau^{-(6m+1)}\rho^{m},\tau)$
\item $(\Gamma_{E_7}^{m},\tau)\cong (\mathbb Z E_7/\tau^{-(9m+1)},\tau)$
\item $(\Gamma_{E_8}^{m},\tau)\cong (\mathbb Z E_8/\tau^{-(15m+1)},\tau)$
\end{itemize}
\end{lemma}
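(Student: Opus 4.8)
The plan is to realise both sides as quotients of the repetitive quiver $\mathbb{Z}\Delta$, with $\Delta\in\{E_6,E_7,E_8\}$, and to reduce the lemma to computing, as an automorphism of the stable translation quiver $(\mathbb{Z}\Delta,\tau)$, the functor $F:=\tau^{-1}\Sigma^{m}$ that defines the orbit category.

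First I would invoke Happel's description of $\mathcal{D}=\mathcal{D}(\mathcal{H})$: for $\mathcal{H}$ hereditary of Dynkin type $\Delta$, the Auslander--Reiten quiver of $\mathcal{D}$ is the repetitive quiver $(\mathbb{Z}\Delta,\tau)$ with $\tau(n,i)=(n-1,i)$, and the shift $\Sigma$ induces an automorphism of $(\mathbb{Z}\Delta,\tau)$. Since $\mathcal{C}^m_{\Phi}=\mathcal{D}/\langle F\rangle$ is by definition the orbit category, and since it is Krull--Schmidt with Auslander--Reiten theory as recalled above, its AR quiver $(\Gamma^m_{\Phi},\tau)$ is the quotient $(\mathbb{Z}\Delta/\langle \overline{F}\rangle,\tau)$ of $(\mathbb{Z}\Delta,\tau)$ by the induced automorphism $\overline{F}$, provided $\overline{F}$ acts admissibly; this passage from orbit categories to quotients of stable translation quivers is the one already used in the cluster ($m=1$) case and its higher analogues. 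Thus everything reduces to identifying $\overline{F}$.

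The key step, and the main obstacle, is to pin down the automorphism of $\mathbb{Z}\Delta$ induced by $\Sigma$. Here I would use that the Serre functor of $\mathcal{D}$ is $S=\tau\Sigma$ together with the fractional Calabi--Yau property of Dynkin derived categories (equivalently $S^{h}\cong\Sigma^{h-2}$), which yields $\Sigma^{2}\cong\tau^{-h}$, where $h$ is the Coxeter number, so $h=12,18,30$ for $E_6,E_7,E_8$. A square root of $\tau^{-h}$ inside the automorphism group of $(\mathbb{Z}\Delta,\tau)$ is $\tau^{-h/2}$ twisted by a diagram automorphism, the relevant twist being the one induced by $-w_0$, with $w_0$ the longest element of the Weyl group. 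For $E_7$ and $E_8$ one has $w_0=-1$, so no twist occurs and $\Sigma=\tau^{-9}$ and $\Sigma=\tau^{-15}$ respectively; for $E_6$ one has $-w_0=\rho$, the order two diagram involution, so $\Sigma=\tau^{-6}\rho$. I would fix the sign and confirm the presence of $\rho$ by two independent checks: tracking the action on $K_0(\mathcal{D})$, where $\Sigma$ acts as $-1$ and $\tau$ as the Coxeter transformation of order $h$; and counting the vertices of the resulting quotient, which must equal the number $mN+n$ of colored almost positive roots, where $N$ is the number of positive roots and $n$ the rank.

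Finally I would assemble the pieces. Using that $\rho$ commutes with $\tau$ and $\rho^{2}=\mathrm{id}$, substitution gives
\[
F=\tau^{-1}\Sigma^{m}=\tau^{-(6m+1)}\rho^{m},\qquad \tau^{-(9m+1)},\qquad \tau^{-(15m+1)}
\]
in types $E_6,E_7,E_8$. Each of these shifts the first coordinate of $\mathbb{Z}\Delta$ by a strictly positive amount, so $\overline{F}$ is fixed-point free and acts admissibly; hence the quotient is a genuine stable translation quiver and
\[
(\Gamma^m_{E_6},\tau)\cong(\mathbb{Z}E_6/\tau^{-(6m+1)}\rho^{m},\tau),\qquad (\Gamma^m_{E_7},\tau)\cong(\mathbb{Z}E_7/\tau^{-(9m+1)},\tau),
\]
and likewise for $E_8$, as claimed. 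The only genuinely delicate point is the determination of $\Sigma$ on $\mathbb{Z}E_6$, including the twist by $\rho$; the remaining arguments are bookkeeping with commuting automorphisms and the orbit-category-to-quotient dictionary.
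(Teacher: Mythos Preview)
Your proposal is correct and follows essentially the same route as the paper: invoke Happel's identification of the AR quiver of $\mathcal{D}$ with $(\mathbb{Z}\Delta,\tau)$, determine the automorphism of $\mathbb{Z}\Delta$ induced by $\Sigma$ (namely $\rho\tau^{-6}$, $\tau^{-9}$, $\tau^{-15}$ in types $E_6,E_7,E_8$), and then read off $\tau^{-1}\Sigma^m$. The only difference is that the paper simply cites Miyachi--Yekutieli for the action of $\Sigma$, whereas you derive it via the fractional Calabi--Yau relation $\Sigma^2\cong\tau^{-h}$ together with the $-w_0$ twist; both lead to the same identification.
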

\begin{proof} The claim follows from Happel's result, 
see \cite[I.5.5]{Happel2} together with the description of the
induced action of the functors $\tau$ and $\Sigma$
on $\mathbb Z Q$, first given in \cite[Chap.\ 4]{Miyachi}.
Specifically, the induced action of $\tau$ on $\mathbb Z Q$ is always an
horizontal shift to the left.
The induced action of $\Sigma$ on $\mathbb Z A_n$ can be described by
a shift of $\frac{n+1}{2}$ steps to the right, composed with a reflection
along the horizontal central line of $\mathbb Z A_n$. 
When $n$ is even, the steps are not required to be integer units.
The action of $\Sigma$ on $\mathbb{Z} E_6$
coincides with the action of $\rho\tau^{-6}$ on $\mathbb{Z} E_6$.
Moreover, $\Sigma$ acts as $\tau^{-9}$ on $\mathbb{Z} E_7$,
and as $\tau^{-15}$ on $\mathbb{Z} E_8$.
\end{proof}
The case of $(\Gamma_{F_4}^{m},\tau)$ follows from $(\Gamma_{E_6}^{m},\tau)$ via 
a standard folding argument.

\subsection{Cluster tilting theory in $m$-cluster categories}
Work of Thomas \cite{Th07} (for simply laced cases) and of
Zhu's \cite{Zhu08} (general case) describe the combinatorics of cluster tilting objects in $m$-cluster categories associated to valued quivers. We now recollect a few results from \cite{Th07,Zhu08}.

\begin{de} \leavevmode
\begin{itemize}
\item
An object $T\in\mathcal{C}^m_{\Phi}$ is called $m$-rigid
if it is the direct sum of non isomorphic indecomposable objects
$T_1,\dots, T_t$ such that $\mathrm{Ext}^i_{
\mathcal{C}^m_{\Phi}}(T_j,T_k)=0$ for all
$1\leq i \leq m,$ and $1\leq j,k\leq t$. It is called maximal $m$-rigid if it is $m$-rigid and maximal with respect to this property.
\item
An object $T\in\mathcal{C}^m_{\Phi}$ is an $m$-cluster tilting object which is maximal $m$-rigid and $X\in\mathrm{add} T$ if and only if $\mathrm{Ext}^i_{\mathcal{C}^m_{\Phi}}(T,X)=0$
for $1\leq i \leq m$.
\end{itemize}
\end{de}

To exhaustively search for $m$-cluster tilting objects in $m$-cluster categories, it is crucial to know how many objects of this type there are in a fixed $m$-cluster category. The next result gives an explicit answer for $m$-cluster categories associated to valued quivers.
 
\begin{thm}[\cite{FR,Zhu08}]\label{thm:Zhu_FR}
Let $Q$ be an orientation of a connected 
Dynkin diagram and $\Phi$ 
the corresponding root system. The number of $m$-cluster 
tilting objects in $\mathcal{C}^m_{\Phi}$ is
\[
\prod_i\frac{mh+e_i+1}{e_i+1}
\]
where $h$ is the Coxeter number of $\Phi$, and $e_1,\dots, e_n$ 
are the exponents of $\Phi$.
\end{thm}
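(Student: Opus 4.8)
The plan is to pass from the representation-theoretic count to a purely combinatorial one and then to invoke the known Fuss--Catalan numerology. First I would set up a bijection between $m$-cluster tilting objects in $\mathcal{C}^m_{\Phi}$ and the facets of Fomin--Reading's generalized cluster complex $\Delta^m(\Phi)$. The vertices of $\Delta^m(\Phi)$ are the \emph{colored almost positive roots}: $m$ colored copies of each positive root of $\Phi$ together with the negative simple roots. Using the description of the Auslander--Reiten quiver of $\mathcal{C}^m_{\Phi}$ supplied by Lemma~\ref{lemma:AR}, the indecomposable objects correspond to these colored roots. The essential point is that the $\mathrm{Ext}$-vanishing condition defining $m$-rigidity matches exactly the Fomin--Reading compatibility relation on colored roots, so that maximal $m$-rigid (equivalently, $m$-cluster tilting) objects correspond precisely to maximal compatible sets, i.e.~to facets of $\Delta^m(\Phi)$. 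This dictionary is what Zhu establishes in \cite{Zhu08}, generalizing the $m=1$ case.

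With the count reduced to the number of facets of $\Delta^m(\Phi)$, I would compute it. One route is type-by-type, using the classification of connected Dynkin diagrams. In type $A_n$ the facets are in bijection with the $(m+2)$-angulations of a convex polygon, whose number is the Fuss--Catalan number, and analogous geometric models handle types $B,C,D$; for the exceptional types $E_6,E_7,E_8,F_4,G_2$ the complex is finite and the facet count can be verified directly. A cleaner, uniform route is to exhibit a bijection between the facets of $\Delta^m(\Phi)$ and the $m$-divisible noncrossing partitions of the Weyl group $W=W(\Phi)$, whose cardinality is given by Chapoton's product formula. Writing $d_i=e_i+1$ for the degrees of the fundamental invariants of $W$, the number of such objects is $\prod_i \frac{mh+d_i}{d_i}$, which is exactly the claimed expression $\prod_i \frac{mh+e_i+1}{e_i+1}$.

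The main obstacle lies in this last counting step rather than in the dictionary. The product formula itself rests on the numerology of the exponents and degrees, and proving it without case analysis requires either the construction and verification of the bijection to $m$-divisible noncrossing partitions---itself a nontrivial task---or an invocation of the Shephard--Todd/Solomon-type results underlying Chapoton's formula. In the case-by-case approach the difficulty is concentrated in the exceptional types, where one must fall back on explicit (often computer-assisted) enumeration rather than a conceptual argument. For the purposes of this paper, since only the exceptional types $E_6, E_7, E_8$ (and $F_4$ by folding) are needed, I would in fact verify the formula directly in those cases, using the explicit Auslander--Reiten quiver of $\mathcal{C}^m_{\Phi}$ furnished by Lemma~\ref{lemma:AR}.
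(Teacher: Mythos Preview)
Your proposal is correct and matches the paper's approach: the paper does not prove this result at all but simply records that it ``follows from \cite[Prop.~8.4]{FR} as described in \cite[Cor.~5.8]{Zhu08}'', which is exactly the two-step dictionary (Zhu's bijection to facets of $\Delta^m(\Phi)$, then Fomin--Reading's facet count) that you outline. Your final suggestion of a direct verification in types $E$ and $F$ via Lemma~\ref{lemma:AR} is not pursued in the paper, which is content to quote the general formula.
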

Theorem \ref{thm:Zhu_FR}
follows from \cite[Prop. 8.4]{FR} as described in \cite[Cor.~5.8]{Zhu08}. The exponents in the next Table are extracted from \cite{FR}.
\begin{center}
\begin{tabular}{ |c|c|c| c|}
 \hline
 $\Phi$  & $h$ & Exponents & $2$-cluster tilting objects\\
  \hline
 $E_6$  & 12 & 1,4,5,7,8,11 & 16.588 \\
 $F_4$  & 12 & 1,5,7,11&  780\\
 $E_7 $ & 18 & 1,5,7,9,11,13,17 & 144.210\\
 $E_8$  & 30 & 1,7,11,13,17,19,23,29 &15.209.220 \\
 \hline
\end{tabular}
\end{center}

\subsection{Colored almost positive roots}\label{sec:roots}
Let $\Phi$ be an irreducible root system of rank $n$, associated to a valued graph, or quiver.
Let $I$ be an $n$-element indexing set.
Let $\Phi_{>0}$ be the set of positive roots and let $\{\alpha_i:i\in I\}$ be the set of simple roots in $\Phi$.
Following \cite{FR} we define the set of almost positive roots in $\Phi$ as
\[
\Phi_{\geq -1} = \Phi_{>0} \ \cup \ \{-\alpha_i:i\in I\}.
\]
For $m\in\mathbb Z_{>0}$, the set of colored almost positive roots consists of $m$-copies of the set $\Phi_{>0}$ together with one copy of the negative simple roots in $\Phi$, i.e.:
\[
\Phi^m_{\geq -1} = \underbrace{\Phi_{>0} \ \cup \ \dots \ \cup \ \Phi_{>0} }_{m-\textrm{copies}} \ \cup \ \{-\alpha_i:i\in I\}.
\] 
When $m=1$, the index in $\Phi^m_{\geq -1}$ is omitted. The compatibility
degree of $m$-colored almost positive roots is defined in \cite[Def 3.1]{FR}.

On the one side, work of Thomas \cite[\S.6]{Th07} describes a correspondence between $m$-colored almost positive roots and indecomposable objects in a geometrically defined $m$-cluster category associated to $\Phi$. The compatibility
degree of $m$-colored almost positive roots can then equivalently be computed via $\mathrm{Ext}$-functors in 
$m$-cluster categories \cite[Prop.~2]{Th07}. 
The non-simply laced case was described in work of Zhu \cite[\S.5]{Zhu08} and \cite[\S.5, Def.~5.2]{Zhu08}.
When $m=1$ and $Q$ is of finite Dynkin type, 
this property is shown in
\cite[Prop.~4.2]{BMRRT}, \cite{Zhu06}.

On the other side, 
work of Fomin--Reading \cite[\S.5]{FR} implies that 
there is a one-to-one correspondence 
between $m$-diagonals in certain polygons $\Pi$ and $m$-colored almost positive roots $\Phi_{\geq -1}^m$ of type $ABCD$.
See also work of Tzanaki  \cite{Tz06} for types $AB.$ 

In the above settings, two colored almost positive roots  $\alpha$ and $\beta$ are compatible, if the corresponding $m$-diagonals $D_{\alpha}$ and $D_{\beta}$ do not cross.
When $m=1,$ these descriptions coincide with the ones given in \cite[\S.3.5]{FZy}.

\subsection{The $m$-power of a translation quiver}
Colored almost positive roots in a root system of type $A$ can be identified with a subset of almost positive roots in a larger root system of the same type,
see for instance \cite{Th07,FR,Tz06}.
In \cite{BM} this relationship
is described in terms of an $m$-power operation. 
We now extend this approach 
to $m$-cluster categories of type $E$.

\begin{de}
Let $(\Gamma, \tau)$ be a translation quiver. The quiver $(\Gamma)^m$ 
whose vertices are the same as the ones from $\Gamma$
and whose arrows are sectional paths of length $m$ is called 
the $m$-th power of $\Gamma$. A path 
$(x=x_0\rightarrow x_1\rightarrow...\rightarrow x_{m-1}\rightarrow x_m=y)$ in $\Gamma$ is said to be
sectional if $\tau x_{i+1}\neq x_{i-1}$ for $i=1,...,m-1$ (for which $\tau x_{i+1}$ is defined).
\end{de}

\begin{lemma}\label{lemma_thm6.1}
If $(\Gamma, \tau)$ is a stable translation quiver, then
$((\Gamma)^m,\tau^m)$ for $\tau^m:=\tau\circ...\circ\tau,$
composed $m$ times, is again a stable translation quiver.
\end{lemma}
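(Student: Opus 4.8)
The plan is to verify the three defining properties of a stable translation quiver for $((\Gamma)^m,\tau^m)$: that $\tau^m$ is a bijection of the vertex set, that $(\Gamma)^m$ has neither loops nor multiple edges, and, above all, the translation (mesh) condition. The first property is immediate: the vertices of $(\Gamma)^m$ are those of $\Gamma$, and since $\tau$ is a bijection so is $\tau^m=\tau\circ\cdots\circ\tau$. Stability of $(\Gamma,\tau)$ guarantees that every power $\tau^i$ is defined on every vertex, so iterated translation makes sense throughout the argument. I first record the elementary consequence of the translation axiom that drives everything: whenever $a\to b$ is an arrow there is an arrow $\tau b\to a$, and applying this twice there is an arrow $\tau a\to\tau b$; hence $\tau$ and all its powers act on arrows.

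Following the idea of Baur--Marsh \cite{BM}, the heart of the proof is a \emph{reflection} operation $R$ on sectional paths. Given a sectional path $P:x_0\to x_1\to\cdots\to x_m$ of length $m$, I would define $R(P)$ to be the path on the vertices $z_j:=\tau^{\,m-j}x_{m-j}$, $j=0,\dots,m$, that is
\[
R(P):\ \tau^m x_m\to\tau^{m-1}x_{m-1}\to\cdots\to\tau x_1\to x_0 .
\]
Each step $\tau^{i+1}x_{i+1}\to\tau^{i}x_i$ is a genuine arrow of $\Gamma$: it is obtained from the original arrow $x_i\to x_{i+1}$ by translating it to $\tau x_{i+1}\to x_i$ and then applying $\tau^{i}$. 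Thus $R(P)$ is a length-$m$ path starting at $\tau^m x_m=\tau^m x$ (where $x=x_m$ is the terminal vertex of $P$) and ending at the initial vertex $x_0$ of $P$.

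Next I would verify that $R$ preserves sectionality and is a bijection. A direct computation shows that the sectionality condition for $R(P)$ at the internal vertex $z_j$ reads $\tau^{m-j}x_{m-j-1}\neq\tau^{m-j+1}x_{m-j+1}$, which after cancelling $\tau^{m-j}$ becomes exactly $\tau x_{i+1}\neq x_{i-1}$ with $i=m-j$, i.e. the sectionality of $P$ at $x_i$. Hence $R$ carries sectional paths to sectional paths, and applying it twice one computes $R^2(P)$ to be $\tau^m x_0\to\cdots\to\tau^m x_m$; thus $R^2=\tau^m$ acting by translating all vertices, and since $\tau^m$ is invertible, $R$ is a bijection on length-$m$ sectional paths. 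The mesh condition now follows. Fix a vertex $x$ and compare the set $A$ of initial vertices of $(\Gamma)^m$-arrows ending at $x$ with the set $B$ of terminal vertices of $(\Gamma)^m$-arrows starting at $\tau^m x$. If $P:z\rightsquigarrow x$ is sectional of length $m$, then $R(P):\tau^m x\rightsquigarrow z$ gives $z\in B$, so $A\subseteq B$; conversely, given a sectional $Q:\tau^m x\rightsquigarrow z$, the path $R(Q):\tau^m z\rightsquigarrow\tau^m x$ translated by $\tau^{-m}$ (sectionality being $\tau$-equivariant) yields a sectional $z\rightsquigarrow x$, so $z\in A$ and $B\subseteq A$. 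Therefore $A=B$, which is precisely the translation axiom for $((\Gamma)^m,\tau^m)$.

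Finally one must check that $(\Gamma)^m$ has neither loops nor multiple edges. A loop, respectively a multiple edge, would be a sectional path of length $m$ from a vertex to itself, respectively two distinct sectional paths of length $m$ with the same endpoints. I expect this to be the only genuinely delicate point: the reflection $R$ establishes the mesh condition in complete generality from the bare translation axiom, whereas excluding loops and multiple edges requires the finer structure of $\Gamma$. In the setting relevant to this paper, where $\Gamma=\mathbb{Z}E_n$ is the repetitive quiver of a tree, sectional paths between two vertices are unique and non-periodic, so both conditions hold; it is here that the ambient hypotheses, rather than the abstract translation structure, must be invoked.
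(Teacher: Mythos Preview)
The paper's own proof consists solely of a citation to Theorem~6.1 and Corollary~6.2 of Baur--Marsh~\cite{BM}; your argument is a faithful reconstruction of that reflection argument, and the verification of the translation axiom via the map $R$ (with $R^2$ acting as $\tau^m$ on sectional paths, so that $R$ is a bijection interchanging sectional paths into $x$ with sectional paths out of $\tau^m x$) is correct as written.

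Your closing caveat is also well placed and worth keeping. Under the paper's restrictive definition of a stable translation quiver (no loops, no multiple arrows), the absence of loops and multiple arrows in $(\Gamma)^m$ is \emph{not} a formal consequence of the mesh axiom alone: the reflection $R$ only matches predecessors of $x$ with successors of $\tau^m x$ as \emph{sets}, and says nothing about distinct sectional paths sharing endpoints. That this does not occur when $\Gamma=\mathbb{Z}Q$ for a tree $Q$ (uniqueness and aperiodicity of sectional paths) is exactly the extra input needed, as you say; the paper sidesteps this by appealing directly to~\cite{BM}, whose framework accommodates this issue.
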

\begin{proof}
This is shown in Theorem 6.1 in \cite{BM} and Corollary 6.2 in \cite{BM}.
\end{proof}

Let $T_{r,s,t}$ be an orientation of a finite connected
graph with $r+s+t+1$ vertices and three legs.
Assume that one vertex of $T_{r,s,t}$ has three neighbors and that
the legs of $T_{r,s,t}$ have $r$, resp.~$s$, 
resp.~$t$ vertices. A tree $T_{r,s,t}$
is symmetric if two legs have the same number of vertices.

Below, for $i\in\{6,7,8\}$, let $(\Gamma_{E_i}^{m},\tau)$ be as described in Lemma \ref{lemma:AR}.

\begin{prop}\label{prop_m_power}
The following quivers are connected components of the
$m$-power of the given stable translation quivers:
\begin{itemize}
 \setlength\itemsep{1em}
\item $
(\Gamma_{E_6}^{m},\tau^m)\subseteq(\mathbb Z T_{2m-1,2m,2m}/\tau^{-(6m+1)}\rho^m)^m$
\item $(\Gamma_{E_7}^{m},\tau^m)\subseteq(\mathbb Z T_{2m-1,2m,3m}/\tau^{-(9m+1)})^m$
\item $(\Gamma_{E_8}^{m},\tau^m)\subseteq(\mathbb Z T_{2m-1,2m,4m}/\tau^{-(15m+1)})^m$
\end{itemize}
\end{prop}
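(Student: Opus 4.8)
The plan is to read each inclusion as a comparison of two $m$-th powers of stable translation quivers and to identify the left-hand side with a single connected component of the right-hand side. By Lemma \ref{lemma_thm6.1}, the right-hand quiver $(\mathbb Z T/\langle g\rangle)^m$, with $T$ the enlarged tree and $g$ the prescribed shift, is a stable translation quiver with translation $\tau^m$; by Lemma \ref{lemma:AR} the Auslander--Reiten quiver $\Gamma_{E_i}^m$ is isomorphic to $\mathbb Z E_i/\langle g\rangle$, so that the left-hand side $(\Gamma_{E_i}^m,\tau^m)$ is the $m$-th power of $\mathbb Z E_i/\langle g\rangle$. I treat $E_7$ and $E_8$ together, where $g=\tau^{-(hm/2+1)}$ is a pure power of $\tau$, and record at the end the extra bookkeeping needed for $E_6$, where the reflection $\rho^m$ enters.

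I would first isolate the numerology that organises the search. Forming an $m$-th power does not change the vertex set, and in the stable translation quiver $(\mathbb Z T/\langle g\rangle)^m$ each vertex $x$ is joined to $\tau^m x$ through a mesh, so every connected component is $\tau^m$-stable and is a union of whole $\tau^m$-orbits. Call the columns of $\mathbb Z T$ its $\tau$-orbits, one for each vertex of $T$. The enlarged trees $T_{2m-1,2m,2m}$, $T_{2m-1,2m,3m}$, $T_{2m-1,2m,4m}$ have $6m,7m,8m$ vertices, exactly $m$ times the number $n\in\{6,7,8\}$ of vertices of $E_i$, and $g$ turns each of the $nm$ columns into one $\tau$-orbit of size $hm/2+1$. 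Since $\gcd(m,hm/2+1)=1$, the power $\tau^m$ is already transitive on each column, so the $\tau^m$-orbits coincide with the columns and every component of the $m$-th power is a union of whole columns. Comparing with the $n$ columns of $\mathbb Z E_i/\langle g\rangle$, the sought component must be a union of exactly $n$ of the $nm$ columns, the remaining $n(m-1)$ columns being distributed among the other components; the residual shift $\tau$ permutes these components and exhibits the splitting as a $\mathbb Z/m$-phenomenon.

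The combinatorial core is the description of the length-$m$ sectional paths in $\mathbb Z T$ and of which columns they tie together. I would coordinatise $\mathbb Z T$ by vertices $(p,v)$, with $p\in\mathbb Z$ and $v$ a vertex of $T$, translation $\tau(p,v)=(p-1,v)$, and the usual mesh arrows, and describe a length-$m$ sectional path as a non-backtracking walk that inside any one leg behaves as a sectional path in a type $A$ strip, and that may reflect at the end of a leg or pass through the trivalent branch vertex. Inside a single leg this is exactly the type $A$ situation underlying Lemma \ref{lemma_thm6.1}, so the one genuinely new phenomenon is how such a path is distributed between two legs as it crosses the branch vertex. Fixing a base column, I would generate the columns reachable by length-$m$ sectional paths, check that there are precisely $n$ of them, assembling into the $E_i$-shape of a centre together with arms of lengths $1$, $2$ and $t\in\{2,3,4\}$, and then build an explicit bijection from the resulting component onto $(\mathbb Z E_i)^m$ that carries length-$m$ sectional arrows to length-$m$ sectional arrows. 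It is precisely this branch-crossing bookkeeping that forces the leg lengths: the legs of lengths $2m,3m,4m$ contribute the $E_i$-arms of lengths $2,3,4$, while the shorter leg must have length $2m-1$, and not $m$, for the reflected length-$m$ paths to close up into the length-one arm rather than into a longer or shorter one.

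Finally I would descend to the quotients and match the translations. Because $\tau$ is the horizontal shift on both $\mathbb Z T$ and $\mathbb Z E_i$ and the bijection is built to respect the knitting, it intertwines $\tau^m$ with $\tau^m$; one then checks that $g=\tau^{-(hm/2+1)}$ restricts, on the chosen $n$ columns, to the corresponding shift on $\mathbb Z E_i$, so that the isomorphism descends and identifies the component with $(\Gamma_{E_i}^m,\tau^m)$. For $E_6$ the enlarged tree $T_{2m-1,2m,2m}$ is symmetric, and the additional step is to verify that the folding intertwines the horizontal reflection $\rho$ of $\mathbb Z E_6$ with the reflection exchanging the two equal legs, so that the quotient by $g=\tau^{-(6m+1)}\rho^m$ is respected; this splits into the case $m$ even, where $\rho^m=\mathrm{id}$ and the previous argument applies verbatim, and the case $m$ odd, where $\rho$ genuinely pairs columns. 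I expect the main obstacle to be exactly the sectional-path combinatorics at and across the trivalent vertex: away from the branch the problem decouples leg-by-leg into the type $A$ result already invoked, but controlling how a length-$m$ path splits across two legs, proving that the reachable columns number precisely $n$, and checking that the induced adjacencies are exactly those of $E_i$ --- which is where the asymmetric length $2m-1$ is forced --- is the delicate bookkeeping on which the whole identification rests.
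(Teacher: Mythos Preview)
Your plan rests on a misreading of the left-hand side. The pair $(\Gamma_{E_i}^m,\tau^m)$ is not the $m$-th power of $\mathbb Z E_i/\langle g\rangle$; it is the Auslander--Reiten quiver $\Gamma_{E_i}^m\cong\mathbb Z E_i/\langle g\rangle$ of Lemma~\ref{lemma:AR} itself, written with the translation it inherits as a component of the ambient $m$-th power. The content of the proposition is that a length-$m$ sectional path in $\mathbb Z T/\langle g\rangle$ between component vertices corresponds to an \emph{ordinary} arrow of $\mathbb Z E_i/\langle g\rangle$, not to a length-$m$ sectional path there. So the bijection you propose should land in $\mathbb Z E_i/\langle g\rangle$, not in $(\mathbb Z E_i)^m$; for $m>1$ these are genuinely different quivers, and ``carrying length-$m$ sectional arrows to length-$m$ sectional arrows'' would not establish the stated inclusion. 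Your column count and your analysis of how sectional paths cross the trivalent vertex are aimed at the right phenomenon and would go through once the target is corrected.

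The paper's argument is much shorter and runs in the opposite direction. Rather than analysing the $m$-th power of $\mathbb Z T/\langle g\rangle$ from the outside, it starts from the known shape $\Gamma_{E_i}^m\cong\mathbb Z E_i/\langle g\rangle$ of Lemma~\ref{lemma:AR} and \emph{reverses} the $m$-power construction: one refines $\mathbb Z E_i$ by adding intermediate meshes so that each original arrow becomes a length-$m$ sectional path, which produces $\mathbb Z T$ with $T=T_{2m-1,2m,tm}$, and the same identification $g$ carries over. Taking the $m$-th power of the refined quiver then collapses the inserted vertices and returns $\Gamma_{E_i}^m$ as the component on the original vertices, with no branch-crossing bookkeeping needed. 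Your forward approach has the merit of explaining structurally why the short leg is forced to have length $2m-1$, but once the enlarged tree has been written down the backward construction makes the inclusion essentially tautological.
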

\begin{proof}
Lemma \ref{lemma:AR} describes the shapes
of the quiver $(\Gamma_{E_i}^{m},\tau)$, for $i\in\{6,7,8\}$. 
Reversing the $m$-power procedure, by adding meshes in
the quivers $\Gamma_{E_i}^{m}$, for $i\in\{6,7,8\}$ gives
rise to the larger quivers in the claim.
\end{proof}

\subsubsection{Geometric categories
associated to tree diagrams}\label{se:geo1}

Let $T_{r,s,t}$ be a fixed tree diagram.
For $n\geq\mathrm{max}\{r+s+1;r+t+1\}$ let
$\Pi_{r,s,t}$ be a regular $(n+3)$-gon with vertices numbered 
in the clockwise order by the group $\mathbb Z/(n+3)\mathbb Z$.

For the vertices $i,j,k$ in $\Pi_{r,s,t}$ we write $i\leq j\leq k$
if the vertex $j$ is between $i$ and $k$ in the clockwise order.
A diagonal in $\Pi_{r,s,t}$
joining the vertices $i$ and $j$ is denoted by $(i,j)$
and an oriented diagonal
in $\Pi_{r,s,t}$ starting at $i$ and ending 
in $j$ is denoted by
$[i,j]$. Boundary segments are 
not considered.

Following \cite{L3}, we associate to each leg 
in $T_{r,s,t}$ a set
of diagonals in $\Pi_{r,s,t}$. 
To do so, we double the set of all oriented diagonals in $\Pi_{r,s,t}$,
and distinguish them with colors, red and blue, and
subscripts $R$, $B$.
Specifically, for every vertex $i$ in $\Pi_{r,s,t}$ we then form
$(r+1)$ pairs of
colored oriented diagonals:
\begin{align*}\itemsep0em
[i,i+2]_P=&[i+2,i]_P:=\{ [i,i+2]_R,[i+2,i]_B\}\\
[i,i+3]_P=&[i+3,i]_P:=\{ [i,i+3]_R,[i+3,i]_B\}\\
&\dots\\
[i,i+r+2]_P=&[i+r+2,i]_P:=\{ [i,i+r+2]_R,[i+r+2,i]_B\}.
\end{align*}
As before, coordinates are viewed as elements of 
$\mathbb Z/(n+3)\mathbb Z.$
Once colored oriented
diagonals are paired, they stop existing as single
diagonals in $\Pi_{r,s,t}$.
In the following, we color paired diagonals
in green.

Next, one defines a subset of the diagonals 
of $\Pi_{r,s,t}$ consisting of
$(r+1)(n+3)$ paired, $s(n+3)$ single 
red and $t(n+3)$ single blue oriented
diagonals:
\begin{align*}\itemsep0em
P_{r,s,t}:=\bigg\{&[i,i+2]_G, \dots,[i,i+r+2]_G, \\
&[i,i+r+3]_R,\dots,[i,i+r+s+2]_R, \\
&[i+r+3,i]_B, \dots,[i+r+t+2,i]_B,\hspace{0,3cm} \textrm{ $i$ vertex  of }\Pi_{r,s,t}\bigg\}.
\end{align*}
The set of red, blue and green diagonals of $\Pi_{r,s,t}$
are disjoint. Consequently, orientations can now be omitted. 
Elements in $P_{r,s,t}$ are henceforth called colored diagonals.

\subsubsection{Automorphisms on 
the set of colored diagonals}\label{rhotau}

Let $\rho, \tau_0$ and $\tau$ be automorphisms
acting on the set of colored diagonals $P_{r,s,t}$
of  $\Pi_{r,s,t}$ associated to a tree
$T_{r,s,t}$. The first two automorphisms are non-trivial only when $T_{r,s,t}$ is symmetric. While the 
automorphism $\tau:P_{r,s,t}\rightarrow P_{r,s,t}$ is always
given as the anti-clockwise rotation through 
$\frac{2\pi}{n+3}$ around the center of $\Pi$.

For $c\in\{R,B,G\}$, let $\rho:P_{r,t,t}\rightarrow P_{r,t,t}$
be the automorphism of order two
given by
$$
\rho\big([i,j]_c\big):=\begin{cases}
[j,i]_B & \textrm{if } c=R, \\
[j,i]_R  &\textrm{if } c=B,\\
[i,j]_G &\textrm{otherwise.}
\end{cases}
$$
The definition of $\tau_0:P_{r,t,t}\rightarrow P_{r,t,t}$ is designed to accommodate the
action of the shift functor on the $m$-cluster category 
of type $E_6$:
\[
\tau_0([i,j]_c):=\begin{cases}
\rho\big([i-1,j-1]_c\big)  &\textrm{ if }[i,j]_c\in  P_{r,t,t}\vert_1 \\
[i-1,j-1]_c & \textrm{ otherwise.}
\end{cases}
\]
Here $P_{r,t,t}\vert_1\subset P_{r,t,t}$ denotes the set of diagonals starting, resp.~ending, at the fixed vertex $i=1$.

\subsubsection{Minimal clockwise rotations}\label{rotations}
The definition of minimal clockwise rotation of elements
of $\Pi_{r,s,t}$ changes in the presence of $\rho$.
First, consider the case where $T_{r,s,t}$ is not symmetric, 
or when $T_{r,t,t}$ is symmetric and $\tau$
is the standard anti-clockwise rotation. Then the
minimal clockwise rotations are the standard ones.
That is, for $k,j$ non-neighboring vertices and for $c\in\{R, B, G\}$, 
the minimal clockwise rotation among diagonals in
$P_{r,s,t}$ is defined by:
$[k,j]_c\rightarrow [k,j+1]_c$ and $[k,j]_c\rightarrow[k+1,j]_c$
and if $j\in\{k+r+2,k+r+3\}$ by:
\[
\xymatrix@R=0.5pc @C=0.5pc{
&&[k,k+r+3]_R\ar[rd]&&\\
&[k,k+r+2]_G \ar[ru]\ar[rd]& &[k+1,k+r+3]_G. \\
&&[k+r+3,k]_B\ar[ur]&&
} .\] 

Second, when $T_{r,t,t}$ is symmetric and the translation is
$\tau_0$, 
the minimal clockwise rotations are defined as before
except when $[k,j]_c $,
$[k,k+r+2]_G$ are in $\tau_0(P_{r,t,t}\vert_1)$, 
then one combines the minimal clockwise rotation with
$\rho$.

\subsubsection{Quivers of single and paired colored oriented
diagonals}\label{stabtrans}
Let $\Gamma^{n+3}_{r,s,t}$ be the quiver whose vertices 
are the elements of $P_{r,s,t}$. 
An arrow between two vertices of $\Gamma^{n+3}_{r,s,t}$
is drawn whenever there is a minimal clockwise rotation linking them. No arrow is drawn otherwise.
In this way $\Gamma^{n+3}_{r,s,t}$ lies on a cylinder, unless
when $s=t$ and the translation is given by $\tau_0$. Then 
$\Gamma^{n+3}_{r,t,t}$ lies on a M\"obius strip.

\begin{lemma}\cite[Lemma 4.1]{L3} \label{lemma:stable}
The quivers $(\Gamma_{r,s,t}^{n+3},\tau)$ and
$(\Gamma_{r,t,t}^{n+3},\tau_0)$ are stable translation quivers. \qed
\end{lemma}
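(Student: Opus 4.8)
The plan is to verify the defining property of a stable translation quiver directly from the combinatorial description of $P_{r,s,t}$ and the minimal clockwise rotations. Recall that $(\Gamma,\tau)$ is a stable translation quiver when, for every vertex $x$, the translation $\tau$ is a bijection on vertices and the set of \emph{starting} points of arrows ending at $x$ equals the set of \emph{ending} points of arrows starting at $\tau(x)$. Since both $\tau$ (anti-clockwise rotation by $\frac{2\pi}{n+3}$) and $\tau_0$ are defined as bijections on $P_{r,s,t}$, the bijectivity of the translation is immediate; the substance of the proof is the mesh-compatibility of arrows, i.e.\ matching incoming and outgoing arrows across $\tau$.

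First I would fix a colored diagonal $x=[k,j]_c$ and enumerate the arrows into $x$ and out of $x$ using the minimal clockwise rotation rules of Section~\ref{rotations}. For a generic interior diagonal, the outgoing arrows are $[k,j]_c\to[k,j+1]_c$ and $[k,j]_c\to[k+1,j]_c$ (advancing one endpoint clockwise), and dually the incoming arrows come from $[k,j-1]_c$ and $[k-1,j]_c$. I would then compute $\tau(x)$ and check that the two diagonals reachable by one clockwise rotation \emph{from} $\tau(x)$ are exactly the two diagonals whose clockwise rotation \emph{reaches} $x$. Because $\tau$ rotates both endpoints by one step, this reduces to an elementary index bookkeeping: the four endpoints $k\pm 1$ and $j\pm 1$ are permuted consistently, and the mesh relation closes. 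This handles all vertices away from the special loci.

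The main obstacle, and where the argument requires real care, is the set of special vertices where the simple endpoint-advancement rule is replaced by the branching rule near the green diagonals, namely when $j\in\{k+r+2,k+r+3\}$, where the diagram
\[
\xymatrix@R=0.5pc @C=0.5pc{
&&[k,k+r+3]_R\ar[rd]&&\\
&[k,k+r+2]_G \ar[ru]\ar[rd]& &[k+1,k+r+3]_G \\
&&[k+r+3,k]_B\ar[ur]&&
}
\]
governs the rotations. At these vertices a green diagonal splits into a red and a blue diagonal, and conversely a red and a blue diagonal merge into a green one, so the local arrow multiplicities differ from the generic case. Here I would verify the mesh condition by hand for each of the three vertex types (the green $[k,k+r+2]_G$, the red $[k,k+r+3]_R$, and the blue $[k+r+3,k]_B$), confirming that the translate of the mesh around each such vertex again consists of precisely the starting points of its incoming arrows. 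I would also treat the transition vertices at $j=k+r+2$ where single red/blue diagonals border the paired green region, and the symmetric endpoints where the leg lengths $s,t$ terminate the rotation chains.

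Finally, for the case of $\tau_0$ on the symmetric tree $T_{r,t,t}$, I would repeat the verification but incorporate the modification from Section~\ref{rotations}: on the fiber $P_{r,t,t}\vert_1$ the minimal clockwise rotation is composed with $\rho$, which swaps the red and blue labels and fixes green. The geometric effect is that $\Gamma^{n+3}_{r,t,t}$ lives on a M\"obius strip rather than a cylinder, so crossing the seam at vertex $1$ flips the color, and the incoming/outgoing arrow sets must be matched after applying $\rho$. The key check is that $\rho$ is an involutive graph automorphism compatible with the rotation arrows, so that composing it with the shift preserves the mesh relation across the seam; since $\tau_0$ agrees with $\tau$ (up to $\rho$) away from the seam, the generic computation carries over and only the seam requires separate attention. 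This completes the proof, as stated in \cite[Lemma 4.1]{L3}.
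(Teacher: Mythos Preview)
Your approach is correct: a direct verification of the mesh condition, split into the generic endpoint-advancement case, the branching locus near the green diagonals, and the M\"obius seam for $\tau_0$. Note, however, that the paper itself gives no proof of this lemma at all --- it is quoted from \cite{L3} (Lemma~4.1 there) and simply closed with a \texttt{\textbackslash qed}. So there is nothing to compare against in the present paper; your sketch supplies precisely the kind of case analysis that \cite{L3} presumably carries out. To turn the sketch into a complete proof you would still need to actually list the incoming and outgoing arrow sets at each special vertex type (the green $[k,k+r+2]_G$, the adjacent red/blue, and the terminal vertices of the $s$- and $t$-legs) and check equality under translation, rather than only naming the cases; but the strategy is sound and there is no gap in the plan.
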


\subsubsection{Folded quiver $\Gamma^{n+3}_{r,t}$}
When the quiver $({\Gamma}^{n+3}_{r,t,t},\tau)$, 
resp.~ $({\Gamma}^{n+3}_{r,t,t},\tau_0)$, 
are symmetric along its horizontal central line, 
folding produces a new translation quiver which we denote by
$(\Gamma^{n+3}_{r,t},\tau)$. More precisely, consider again the graph automorphism $\rho$.
The vertices of $\Gamma^{n+3}_{r,t}$ are the
$\rho$-orbits of vertices of $\Gamma^{n+3}_{r,t,t}$.
The arrows in $\Gamma^{n+3}_{r,t}$ are always single and
coincide with minimal clockwise rotation around a common vertex of $\Pi_{r,t,t}$
linking pairs of colored diagonals.
The translation $\tau$ on $\Gamma^{n+3}_{r,t}$ is the induced 
one and given by the anti clockwise rotation through 
$\frac{2\pi}{n+3}$ around the center of $\Pi_{r,t,t}$.

\subsection{Semi-standard Young tableaux and the BFZMS twist}

Denote the set of semi-standard Young tableaux of rectangular shape with $n$ rows and with entries in $[m] = \{1,\dots,m\}$ by ${\rm SSYT}(n, [m])$. We will write a tableau as a matrix.

In \cite{CDFL}, it is shown that every cluster variable in $\mathbb{C}[{\rm Gr}(n,m)]$ corresponds to some tableau $T \in {\rm SSYT}(n, [m])$. Note that not every tableau $T \in {\rm SSYT}(n, [m])$ corresponds to some cluster variable. The tableaux corresponding to cluster variables form a proper subset of ${\rm SSYT}(n, [m])$. We denote the cluster variable corresponding to $T$ by ${\rm ch}(T)$. 
An explicit formula for ${\rm ch}(T)$ is given in \cite{CDFL}. We recall the formula ${\rm ch}(T)$.

In this paper, we are interested in computing the BFZMS twist (removing all frozen factors) of a cluster variable.
It is proved in \cite[Theorem 6]{GLS}, \cite{MS} that the Auslander--Reiten translation $\tau$ corresponds to a twist map (we call it BFZMS twist and denote it also by $\tau$) on $\mathbb{C}[{\rm Gr}(n,m)]$ defined by Berenstein, Fomin, and Zelevinsky in \cite{BFZ96, BZ} and Marsh and Scott in \cite{MS}. Therefore it suffices to use the following version of the formula for ${\rm ch}(T)$. 

Denote by $\mathbb{C}[{\rm Gr}(n,m,\sim)]$ the quotient of $\mathbb{C}[{\rm Gr}(n,m)]$ by the inhomogeneous ideal 
\begin{align}
\langle P_{i,i+1, \ldots, i+n-1}-1, \quad i \in [m-n+1] \rangle.
\end{align}

For $S,T \in {\rm SSYT}(n, [m])$, we denote by $S \cup T$ the row-increasing tableau whose $i$th row is the union of the $i$th rows of $S$ and $T$ (as multi-sets). It is shown in \cite{CDFL} that $S \cup T$ is semi-standard if $S, T$ are semi-standard.

We call $S$ a factor of $T$ and write $S \subset T$ if the $i$th row of $S$ is contained in that of $T$ (as multi-sets), for every $i \in \{1, \ldots, n\}$. A tableau $T \in {\rm SSYT}(n, [m])$ is called trivial if each entry of $T$ is one less than the entry below it. 

An equivalence relation $\sim$ on ${\rm SSYT}(n, [m])$ is defined in \cite{CDFL} as follows. For any $T \in {\rm SSYT}(n, [m])$, we  denote by $T_{\text{red}}$ the tableau with the minimal number of columns such that $T = T_{\text{red}} \cup S$ for a trivial tableau $S$. For $S, T \in {\rm SSYT}(n, [m])$, define $S \sim T$ if $S_{\text{red}} = T_{\text{red}}$. We use the same notation for a tableau $T$ and its equivalence class.

For a Pl\"{u}cker coordinate $P=P_{i_1, \ldots, i_n} \in \CC[\Gr(n,m)]$, the gap weight of $P$ is defined as $\sum_{j=2}^n(i_j-i_{j-1}-1)$, \cite{CDFL}. The gap weight of a tableau $T$ is the sum of the gap weights of $P_{T_i}$, $i \in \{1,\ldots,k\}$, where $T_i$'s are columns of $T$. A tableau $T \in {\rm SSYT}(n,[m])$ has small gap if each of its columns has gap weight exactly $1$. For every $T \in {\rm SSYT}(n,[m])$, there is a unique small gap tableau $T' \in {\rm SSYT}(n,[m])$ such that $T \sim T'$. 

Given $T \in {\rm SSYT}(n,[m])$ with gap weight $k$, let $T' \in {\rm SSYT}(n,[m])$ be the small gap tableau equivalent to $T$. Let ${\bf i}_T = i_1 \leq i_2 \dots \leq i_k$ be the entries in the first row of $T'$. Then the $a$th column of $T'$ has content $[i_a,i_a+n] \setminus \{r_a\}$ for some $r_a$. Let ${\bf j}_T = j_1 \leq j_2 \leq \dots \leq j_k$ be the elements $r_1,\dots,r_k$ written in weakly increasing order. There is a unique maximal length $w_T \in S_k$ such that $\{[i_{w_T(a)}, i_{w_T(a)}+n] \setminus \{j_a\}\}_{a \in \{1, \ldots, k\}}$ are the entries of columns of $T'$. 

Let $T$ be a small gaps tableau with $k$ columns, with ${\bf i} = {\bf i}_T, {\bf j}={\bf j}_T \in \ZZ^k$ the weakly increasing sequences just defined. For $u \in S_k$, define $P_{u;T} \in \CC[\Gr(n,m)]$ as follows. If $j_a \in [i_{u(a)}, i_{u(a)}+n]$ for all $a \in [k]$, then the tableau $\alpha(u;T)$ is the semi-standard tableau whose columns have content $[i_{u(a)}, i_{u(a)}+n] \setminus \{j_a\}$ for $a \in [k]$, and $P_{u; T} = P_{\alpha(u;T)} \in \CC[\Gr(n,m)]$ to be the corresponding standard monomial. If $j_a \notin [i_{u_a}, i_{u(a)}+n]$ for some $a$, then the tableau $\alpha(u;T)$ is undefined and $P_{u ;T} = 0$. 

Let $T \in {\rm SSYT}(n,[m])$ with gap weight $k$ and let $T' \sim T$ be the small gap tableau equivalent to $T$. Then 
\begin{align}
\ch(T) = \sum_{u \in S_k} (-1)^{\ell(uw_T)} p_{uw_0, w_Tw_0}(1) P_{u; T'}, 
\end{align}
where $p_{u,v}(q)$ is the Kazhdan--Lusztig polynomial \cite{KL}. 

There is an order ``$\ge$'' called dominance order on the set of partitions. For two partitions $\lambda = (\lambda_1,\dots,\lambda_\ell)$ and $\mu = (\mu_1,\dots,\mu_\ell)$ with $\lambda_1 \geq \lambda_2 \ge \cdots \geq \lambda_\ell \geq 0$ and $\mu_1 \geq \mu_2 \ge \cdots \geq \mu_\ell \geq 0$, $\lambda \geq \mu$ if and only if $\sum_{j \leq i}\lambda_j \geq \sum_{j \leq i}\mu_j$ for $i=1,\dots,\ell$. 

For a tableau $T$, let ${\rm sh}(T)$ denote the shape of $T$. For $i \in [m]$, denote by $T[i]$ the restriction of $T \in {\rm SSYT}(n,[m])$ to the entries in $\{1, \ldots, i\}$. There is a natural order on the set ${\rm SSYT}(n,[m])$: for $T,T' \in {\rm SSYT}(n, [m])$,
\begin{align}
T \geq T' \text{ if and only if for $i=1,\dots,m$, } {\rm sh}(T[i]) \geq {\rm sh}(T'[i]) \label{eq: order for tableaux}
\end{align}
in the dominance order on partitions.
 
For a one-column tableau $T$, denote by $P_T$ the Pl\"{u}cker coordinate with indices which are the entries of $T$. For $T \in {\rm SSYT}(n, [m])$ with columns $T_1,\dots,T_k$, let $P_T \in \mathbb{C}[{\rm Gr}(n,m)]$ denote the monomial in Pl\"{u}cker coordinates $P_{T_1} \cdots P_{T_k}$.

The monomials $\{P_T\}$, where $T \in {\rm SSYT}(n,[m])$, are a basis for $\mathbb{C}[{\rm Gr}(n, m)]$ known as the standard monomial basis \cite{Ses}. Thus any $x \in \mathbb{C}[{\rm Gr}(n, m)]$ can be written as $x = \sum_{T} c_T P_T$ for some $T \in {\rm SSYT}(n,[m])$ and $c_T \in \CC^{\times}$. It is shown in \cite{CDFL} that the map ${\rm Top}(x)$ taking the largest tableaux in $x = \sum_{T} c_T P_T$: 
\[
{\rm Top}(x) = \max\{T: T \text{ appears in } x=\sum_{T} c_T P_T\}
\]
is well defined.

\begin{example}
\begin{align*}
{\rm Top}( P_{124}P_{356} - P_{123}P_{456} ) = \left(\begin{array}{cc} 1&3\\ 2&5\\ 4&6 \end{array}\right).
\end{align*}
\end{example}

For $v_1, \ldots, v_{k-1} \in \mathbb{C}^k$, the generalized cross product $v_1 \times \cdots \times v_{k-1}$ is the unique vector in $\mathbb{C}^k$ such that 
\begin{align*}
(v_1 \times \cdots \times v_{k-1})^T v = \det(v_1, \ldots, v_{k-1}, v).
\end{align*}

Marsh and Scott defined a twist map $\tau$ on $\mathbb{C}[{\rm Gr}(n, m)]$, \cite{MS}. For a $n \times m$ matrix $p$, the twist $p'$ of $p$ is the $n \times m$ matrix whose $i$th column vector is 
\begin{align*}
(p')_i = \epsilon_i \cdot (p_{\sigma^{k-1}(i)} \times p_{\sigma^{k-2}(i)} \times \cdots \times p_{\sigma(i)}),
\end{align*}
where $\sigma: \{1, \ldots, n\} \to \{1, \ldots, n\}$ is the map given by $i \mapsto i-1 \pmod n$ and
\[
\epsilon_i = \begin{cases} (-1)^{i(k-i)}, & i \le k-1, \\ 1, & i \ge k. \end{cases}
\]

The twist map on the set of $n \times m$ matrices induces a twist map $\tau$ on $\mathbb{C}[{\rm Gr}(n, m)]$. It is shown in \cite[Proposition 8.10]{MS} that $\tau$ sends a cluster variable to a cluster variable (possibly multiply by some frozen variables). 

All mutations in a Grassmannian cluster algebra can be described using tableaux, \cite{CDFL}. Starting from the initial seed of $\CC[\Gr(n,n+\ell+1,\sim)]$, at each mutation step, when one mutates at the vertex $k$ with a cluster variable $\ch(T_k)$, one obtains a new cluster variable $\ch(T'_k)$, where 
\begin{align*}
\ch(T_k)\ch(T'_k) = \prod_{k \to i} \ch(T_i) + \prod_{i \to k} \ch(T_i), \quad T'_k = T_k^{-1} \max\{\cup_{i \to k} T_i, \cup_{k \to i} T_i \},
\end{align*}
and $\max$ is to take the largest tableau with respect to ``$\le$'' defined in (\ref{eq: order for tableaux}) and $A^{-1}B$ is the tableau obtained by deleting the elements in the $j$th row of $A$ from that of $B$ (as multi-sets), $j \in [n]$.

\section{Geometric $m$-cluster categories of type $F_4$, $E_6$, $E_7$, $E_8$} \label{se:geo2}

The geometric model we propose in this section relates
to the geometric model for $m$-cluster categories of type $A$ 
given by Thomas in \cite{Th07} and independently by Baur--Marsh in \cite{BM}.

We then describe how colored almost positive roots
associated to the $m$-cluster categories of type $E$
relate to both almost positive roots of 
cluster categories associated to larger tree diagrams and  
to almost positive roots associated to repetitive cluster categories of type $A$.
In this way, we find new links among the geometric models describing the above categories.

\subsection{The quiver of $m$-diagonals}

Consider the set $P_{1,k_1,k_2}^{n+3,m}\subseteq P_{r,s,t}$ of $m$-colored diagonals given by
\begin{align*}\itemsep0em
P_{1,k_1,k_2}^{n+3,m}:=\bigg\{&[i,i+r+2-m]_G,[i,i+r+2]_G, \\
&[i,i+r+2+m]_R,[i,i+r+2+2m]_R,\dots,[i,i+r+2+k_1m]_R, \\
&[i+r+2+m,i]_B, [i+r+2+2m,i]_B\dots,[i+r+2+k_2m,i]_B:\\
& \textrm{ $i$ vertex  of }\Pi_{r,s,t}\bigg\} \enspace.
\end{align*}
As before, coordinates are viewed 
as elements of $\mathbb Z/(n+3)\mathbb Z$.
For $m\in\mathbb Z_{>0}$, let $\Gamma^{n+3,m}_{1,k_1,k_2}$ be the quiver with vertices given by the
elements of $P_{1,k_1,k_2}^{n+3,m}$
and with arrows induced from ${\Gamma}^{n+3}_{r,s,t}$.
Let $\tau^m: \Gamma^{n+3,m}_{1,k_1,k_2}\rightarrow\Gamma^{n+3,m}_{1,k_1,k_2}$,
resp.~$\tau_0^m: \Gamma^{n+3,m}_{1,k_1,k_1}\rightarrow\Gamma^{n+3,m}_{1,k_1,k_1}$
be the translation induced from $({\Gamma}^{n+3}_{r,s,t},\tau)$, resp.~from $(\Gamma^{n+3,m}_{r,t,t},\tau_0).$

\begin{lemma}\label{lemma:isoquivers}
The following are isomorphisms of stable translation quivers:
\begin{itemize}
\setlength\itemsep{1em}
\item $(\Gamma_{1,2,2}^{6m+1,m},\tau_0^m)\cong (\mathbb Z E_6/\tau^{-(6m+1)}\rho^{m},\tau)$
\item $(\Gamma_{1,2,3}^{9m+1,m},\tau^m)\cong (\mathbb Z E_7/\tau^{-(9m+1)},\tau)$
\item $(\Gamma_{1,2,4}^{15m+1,m},\tau^m)\cong (\mathbb Z E_8/\tau^{-(15m+1)},\tau)$
\end{itemize} \qed
\end{lemma}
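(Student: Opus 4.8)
The plan is to realize each right-hand side as the Auslander--Reiten quiver of the corresponding $m$-cluster category, to exhibit it as a connected component of an $m$-power of a colored-diagonal quiver, and then to match that component with the quiver of $m$-diagonals vertex by vertex and arrow by arrow. First, by Lemma~\ref{lemma:AR} the three quivers on the right are exactly the Auslander--Reiten quivers $(\Gamma_{E_i}^m,\tau)$ of $\mathcal{C}^m_{E_i}$, with $c\in\{6,9,15\}$ and the reflection $\rho^m$ present only in the symmetric case $E_6$. By Proposition~\ref{prop_m_power}, $(\Gamma_{E_i}^m,\tau^m)$ is a connected component of the $m$-power $(\mathbb Z T_{r,s,t}/\tau^{-(cm+1)}\rho^m)^m$, where $(r,s,t)$ is $(2m-1,2m,2m)$, $(2m-1,2m,3m)$, $(2m-1,2m,4m)$ respectively. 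Applying the construction of Section~\ref{se:geo1} to the tree $T_{r,s,t}$ and a regular $(cm+1)$-gon (legitimate for an arbitrary tree by Lemma~\ref{lemma:stable} and \cite{L3}), I would identify the quotient of $\mathbb Z T_{r,s,t}$ occurring in Proposition~\ref{prop_m_power} with the colored-diagonal quiver $\Gamma_{r,s,t}^{cm+1}$, carrying translation $\tau$, resp.\ $\tau_0$ for $E_6$. By Lemma~\ref{lemma_thm6.1} its $m$-power is again a stable translation quiver, so $(\Gamma_{E_i}^m,\tau^m)$ is a connected component of $(\Gamma_{r,s,t}^{cm+1})^m$. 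Since $\Gamma_{1,k_1,k_2}^{cm+1,m}$ is, by definition, the subquiver of this $m$-power supported on $P_{1,k_1,k_2}^{cm+1,m}$, the statement reduces to showing that $P_{1,k_1,k_2}^{cm+1,m}$ is precisely the vertex set of that component and that the arrows agree.

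For the arrows, a minimal clockwise rotation in $\Gamma_{r,s,t}^{cm+1}$ advances exactly one endpoint of a colored diagonal by one step, and the sectional condition $\tau x_{i+1}\neq x_{i-1}$ forces the same endpoint to be advanced throughout a sectional path: once a path takes the head step $[k,j]_c\to[k,j+1]_c$, the identity $\tau([k+1,j+1]_c)=[k,j]_c$ forbids the subsequent tail step, and symmetrically. Hence a sectional path of length $m$ is exactly a jump $[k,j]_c\to[k,j+m]_c$ or $[k,j]_c\to[k+m,j]_c$, and these jumps are precisely the gaps between consecutive members of $P_{1,k_1,k_2}^{cm+1,m}$, whose green, red and blue diagonals are listed with heads, resp.\ tails, spaced by $m$. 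So the arrows the $m$-power induces on $P_{1,k_1,k_2}^{cm+1,m}$ coincide with the minimal clockwise rotations defining $\Gamma_{1,k_1,k_2}^{cm+1,m}$.

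For the vertices, I would check that $P_{1,k_1,k_2}^{cm+1,m}$ is closed under these jumps and spans a single connected component, and that under the geometric identification this is the component carrying $\Gamma_{E_i}^m$; a dimension count serves as a consistency check, since $P_{1,k_1,k_2}^{cm+1,m}$ has $2$ green, $k_1$ red and $k_2$ blue diagonals per vertex of the $(cm+1)$-gon, hence $(2+k_1+k_2)(cm+1)=n(cm+1)$ elements, and $nc=|\Phi_{>0}|$ for each of $E_6,E_7,E_8$ gives $n(cm+1)=m\,|\Phi_{>0}|+n$, the number of indecomposables of $\mathcal{C}^m_{E_i}$.

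The hardest part is the combinatorial bookkeeping in these last two steps, and it is most delicate at the branch vertex of $T_{r,s,t}$, where the special minimal rotations around a paired diagonal $[k,k+r+2]_G$ from Section~\ref{rotations} intervene: one must verify that a length-$m$ sectional path crossing between the green diagonals and the single red and blue ones again lands in $P_{1,k_1,k_2}^{cm+1,m}$ rather than on an intermediate diagonal. For $E_6$ there is the additional difficulty that the translation is $\tau_0^m$, so the $\rho$-twist built into $\tau_0$ on the fiber $P_{r,t,t}\vert_1$ must be reconciled with the reflection $\rho^m$ in the quotient coming from $\Sigma^m=\rho^m\tau^{-6m}$; here the parity of $m$ decides whether $\Gamma_{1,2,2}^{6m+1,m}$ lies on a cylinder or on a M\"obius strip, and one checks that combining the minimal clockwise rotation with $\rho$, as in the definition of $\tau_0$, realizes exactly the induced action of the shift on the component.
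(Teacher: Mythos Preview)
Your route can be made to work, but it is considerably more circuitous than what the paper intends. The paper gives no proof at all: the \qed after the statement signals that the authors regard the lemma as immediate from the definition of $P_{1,k_1,k_2}^{cm+1,m}$ and its induced arrows. The direct argument is a bare shape check: for each vertex $i$ of the $(cm+1)$-gon the set $P_{1,k_1,k_2}^{cm+1,m}$ contributes exactly $2+k_1+k_2$ colored diagonals (two green, $k_1$ red, $k_2$ blue), and the minimal clockwise rotations among them reproduce one slice of $\mathbb{Z}T_{1,k_1,k_2}=\mathbb{Z}E_i$, with the longest green diagonal sitting at the branch vertex (compare Figures~\ref{fig: AR quiver E6} and \ref{fig: AR quiver E7}). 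The translation rotates the polygon, giving period $cm+1$, with the $\rho^m$-twist in the symmetric $E_6$ case arranged by $\tau_0$. That is the entire content.

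Your detour through the larger tree $T_{2m-1,2m,\ldots}$ and Proposition~\ref{prop_m_power} costs you in two places. First, the identification you need in your step (c), namely $\Gamma_{r,s,t}^{cm+1}\cong\mathbb{Z}T_{r,s,t}/\tau^{-(cm+1)}\rho^m$ for the \emph{non-Dynkin} trees $T_{2m-1,2m,2m}$, $T_{2m-1,2m,3m}$, $T_{2m-1,2m,4m}$, is not established anywhere in the paper or in \cite{L3}; proving it is exactly the same slice-and-period verification as the direct argument for the lemma, only for a larger tree, so you have not actually bypassed the work. Second, pinning down the correct component of the $m$-power and matching it with $P_{1,k_1,k_2}^{cm+1,m}$---the part you rightly flag as hardest---forces you to track sectional paths across the branch vertex and through the $\rho$-twist, an analysis the direct argument never needs. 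Finally, observe that in the paper's logical flow Lemma~\ref{lemma:isoquivers} is an \emph{input} to Theorem~\ref{thm:mpower} (combined there with Proposition~\ref{prop_m_power}), not a consequence of the $m$-power picture; your ordering effectively reverses this, which, while not circular, indicates that the machinery you invoke is downstream of the statement you are trying to prove.
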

Denote
the mesh category associated to the translation quiver $(\Gamma,\tau)$
by $\mathcal{C}(\Gamma,\tau)$.

\begin{thm}\label{thm:geom_equiv}
The following are equivalences of additive categories:
\begin{align*}
&\mathcal{C}(\Gamma_{1,2,2}^{6m+1,m},\tau_0^m)\rightarrow \mathcal{C}^{m}_{E_6} \\
&\mathcal{C}(\Gamma_{1,2,3}^{9m+1,m},\tau^m)\rightarrow \mathcal{C}^{m}_{E_7}\\
&\mathcal{C}(\Gamma_{1,2,4}^{15m+1,m},\tau^m)\rightarrow \mathcal{C}^{m}_{E_8}.
\end{align*}
\end{thm}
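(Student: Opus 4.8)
The plan is to deduce each of the three equivalences by identifying both sides with the mesh category of a single common stable translation quiver, and then invoking the fact that the $m$-cluster categories in question are \emph{standard}, i.e.\ equivalent to the mesh category of their own Auslander--Reiten quiver. Throughout I only aim for equivalences of additive ($k$-linear) categories, as stated, since the mesh category carries no a priori triangulated structure.

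First I would compose the isomorphisms of stable translation quivers of Lemma \ref{lemma:isoquivers} with those of Lemma \ref{lemma:AR}. For each $i\in\{6,7,8\}$ this produces an isomorphism of stable translation quivers between the geometric quiver $(\Gamma^{n+3,m}_{1,k_1,k_2},\tau^m)$, respectively $(\Gamma^{6m+1,m}_{1,2,2},\tau_0^m)$ in the $E_6$ case, and the Auslander--Reiten quiver $(\Gamma^m_{E_i},\tau)$ of $\mathcal{C}^m_{E_i}$. An isomorphism of stable translation quivers induces an isomorphism of the associated additive path categories carrying the mesh ideal of one onto that of the other, hence an equivalence of the corresponding mesh categories. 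This already yields
\[
\mathcal{C}(\Gamma^{n+3,m}_{1,k_1,k_2},\tau^m)\ \simeq\ \mathcal{C}(\Gamma^m_{E_i},\tau),
\]
so the content of the theorem reduces to identifying the right-hand mesh category with $\mathcal{C}^m_{E_i}$ itself.

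Second, I would establish standardness. The bounded derived category $\mathcal{D}=\mathcal{D}^b(kQ)$ of a Dynkin quiver $Q$ is representation-finite and standard by Happel's theorem \cite[I.5.5, I.5.6]{Happel2}: the canonical functor from the mesh category of $(\mathbb Z Q,\tau)$ to $\mathcal{D}$ is an equivalence. Since $\mathcal{C}^m_{E_i}=\mathcal{D}/\langle\tau^{-1}\Sigma^m\rangle$ is the orbit category under a fixed-point-free, suitably discrete automorphism (whose orbit category is triangulated by \cite[Theorem 1]{Kell08}), and since passing to the mesh category commutes with passing to the orbit quiver for such an action, the canonical equivalence descends to one between the mesh category of the orbit quiver and $\mathcal{C}^m_{E_i}$. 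By Lemma \ref{lemma:AR} that orbit quiver is precisely $(\Gamma^m_{E_i},\tau)$, whence $\mathcal{C}(\Gamma^m_{E_i},\tau)\simeq\mathcal{C}^m_{E_i}$. Composing with the equivalence of the previous paragraph gives the three asserted functors.

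The main obstacle I anticipate lies in the $E_6$ case. There the shift $\Sigma$ acts on $\mathbb Z E_6$ as $\rho\tau^{-6}$ rather than as a pure power of $\tau$, so the defining automorphism $\tau^{-(6m+1)}\rho^{m}$ genuinely mixes the reflection $\rho$ with the translation. One must verify that this automorphism is still admissible, so that the orbit remains a stable translation quiver whose mesh category matches $\mathcal{C}^m_{E_6}$, and that the geometric translation $\tau_0^m$ on $\Gamma^{6m+1,m}_{1,2,2}$ corresponds under the isomorphism exactly to the $\rho^m$-twisted induced action of $\tau$. Concretely, this is where the twisted translation $\tau_0$ and the M\"obius-strip folding of Section \ref{stabtrans} enter, and it is the one place where compatibility must be checked by hand. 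The cases $E_7,E_8$ are comparatively routine, since $\Sigma$ acts there as the pure power $\tau^{-9}$, resp.\ $\tau^{-15}$, and the corresponding $F_4$ statement follows from $E_6$ by the folding argument recorded after Lemma \ref{lemma:AR}.
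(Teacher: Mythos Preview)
Your proposal is correct and follows essentially the same route as the paper: reduce to an isomorphism of stable translation quivers by combining Lemma \ref{lemma:isoquivers} with Lemma \ref{lemma:AR}, and then pass to mesh categories. The paper's own proof is terser---it simply invokes Lemma \ref{lemma:stable} and Lemma \ref{lemma:AR} and declares that the isomorphism of stable translation quivers is all that is needed---whereas you have spelled out the standardness step (that $\mathcal{C}^m_{E_i}$ is equivalent to the mesh category of its AR quiver, via Happel's theorem descended to the orbit category) that the paper leaves implicit.
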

\begin{proof}
Given Lemma \ref{lemma:stable} 
all we need to show is that there is an isomorphism of stable translation quivers
between $({\Gamma}^{n+3,m}_{r,s,t}, \tau_l^m)$ and
the Auslander--Reiten quiver of $\mathcal{C}^m_{E_u}$, for
the appropriate values of $l,n,r,s,t$ and $u\in\{6,7,8\}$. 
For the shape of the Auslander--Reiten quiver of $\mathcal{C}^m_{E_u}$,
see Lemma \ref{lemma:AR}.
\end{proof}
\begin{remark}
When $m=1$, Theorem \ref{thm:geom_equiv} becomes Theorem 4.3 in \cite{L3}.
\end{remark}
Two slices from the quivers $\mathcal{C}(\Gamma_{1,2,2}^{7},\tau_0)$, $\mathcal{C}(\Gamma_{1,2,3}^{10},\tau)$ and $\mathcal{C}(\Gamma_{1,2,4}^{16},\tau)$ are illustrated in Figure \ref{fig: AR quiver E6} and Figure \ref{fig: AR quiver E7}. The remaining slices are obtained from these by applying the translation map. 

\begin{figure}
$$
\scalemath{0.8}{
\xymatrix@-6mm@C-0.2cm{
& & [1,6]_R \ar[rdd] & &    [2,7]_R    \\
\\
&  [1,5]_R   \ar[rdd] \ar[ruu] & &  [2,6]_R   \ar[ruu]\ar[rdd] \\
 \\
&  \ar[r]  [2,4]_G    & \ar[r]   [2,5]_G \ar[ruu]\ar[rdd] &   [3,5]_G \ar[r]  & [3,6]_G \\
 \\
& [5,1]_B    \ar[ruu] \ar[rdd]&&   [6,2]_B \ar[rdd] \ar[ruu]  \\
 \\
&&  [6,1]_B \ar[ruu]&&  [7,2]_B \\
} }
$$
\caption{Two slices of the Auslander--Reiten 
quiver of $\mathcal{C}_{E_6}$}
\label{fig: AR quiver E6}
\end{figure}
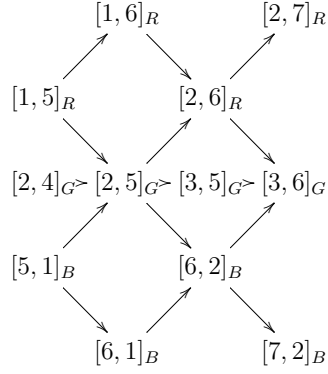

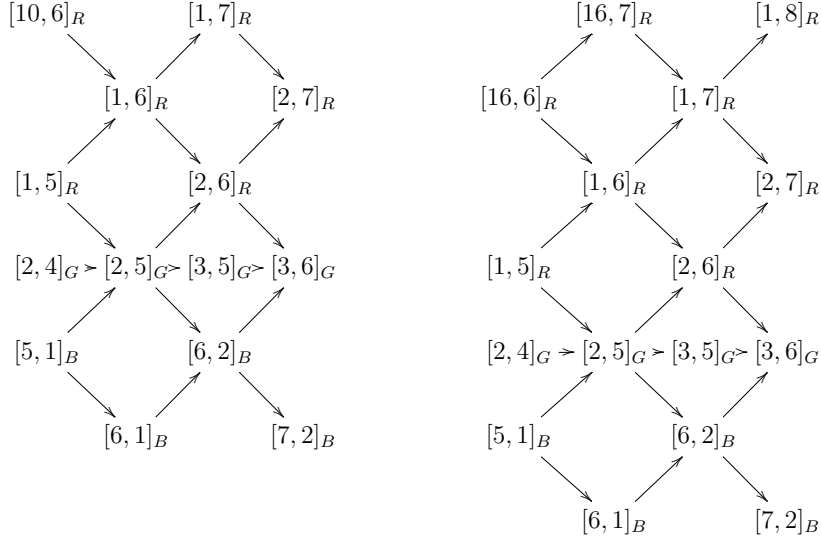
\begin{figure}
$$
\scalemath{0.8}{
\xymatrix@-6mm@C-0.2cm{
& [10,6]_{R} \ar[rdd] & & [1,7]_R \ar[rdd] & &       \\
\\
& & [1,6]_R  \ar[ruu] \ar[rdd] & &    [2,7]_R  & &  
\\
\\
&  [1,5]_R   \ar[rdd] \ar[ruu] & &  [2,6]_R    \ar[rdd]\ar[ruu] & &    
 \\
 \\
&  \ar[r]  [2,4]_G    & \ar[r]   [2,5]_G \ar[ruu]\ar[rdd] &  \ar[r]  [3,5]_G    &[3,6]_G &     &
 \\
 \\
& [5,1]_B    \ar[ruu] \ar[rdd]&&   [6,2]_B  \ar[ruu] \ar[rdd]&&    \\
 \\
&&  [6,1]_B \ar[ruu]&&  [7,2]_B  &&   \\
} }
\hspace{1cm}
\scalemath{0.8}{
\xymatrix@-6mm@C-0.2cm{
& & [16,7]_R \ar[rdd] & &    [1,8]_R       \\
\\
& [16,6]_{R} \ar[ruu] \ar[rdd] & & [1,7]_R \ar[ruu] \ar[rdd] & &      \\
\\
& & [1,6]_R  \ar[ruu] \ar[rdd] & &    [2,7]_R  \\
\\
&  [1,5]_R   \ar[rdd] \ar[ruu] & &  [2,6]_R    \ar[rdd]\ar[ruu] & &        \\
 \\
&  \ar[r]  [2,4]_G    & \ar[r]   [2,5]_G \ar[ruu]\ar[rdd] &  \ar[r]  [3,5]_G    & [3,6]_G &   \\
 \\
& [5,1]_B    \ar[ruu] \ar[rdd]&&   [6,2]_B  \ar[ruu] \ar[rdd] \\
 \\
&&  [6,1]_B \ar[ruu]&&  [7,2]_B     \\
} }
$$
\caption{Two slices of the Auslander--Reiten 
quiver of $\mathcal{C}_{E_7}$ (on the left) and of $\mathcal{C}_{E_8}$ (on the right).}
\label{fig: AR quiver E7}\label{fig: AR quiver E8 part 1}
\end{figure}

\subsection{Colored almost positive roots, and colored $m$-diagonals}\label{se:roots}

In Prop.~\ref{prop:mcatroots} we explain how the non-crossing property of $m$-diagonals in polygons, resp.~of compatibility of colored almost positive roots in type $A$, translates to geometric $m$-cluster categories of type $E$. 
From this result, it will follow that
maximal sets of non-crossing $m$-colored diagonals 
describe only a subset of all possible 
maximal compatible sets of colored 
almost positive roots in type $E$. To describe the remaining
sets is much more difficult. 
In Section \ref{sec:smallex} and Section \ref{sec:ct} further descriptions of these sets will be provided.

\begin{prop}\label{prop:mcatroots}
For any pair of $m$-colored almost positive real Schur roots $\alpha, \beta$ of type $\Phi\in \{E_6,E_7,E_8\}$ the compatibility degree $(\alpha\vert\vert \beta)_{\mathcal{ C}^m_{\Phi}}$ agrees with the number of $m$-curves of $D_{\alpha}$ intersecting  with $D_{\beta}$.
\end{prop}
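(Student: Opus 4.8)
The plan is to transport the compatibility degree, which is an intrinsically categorical (Ext-theoretic) quantity, onto the geometric side via the equivalences of Theorem~\ref{thm:geom_equiv}, and then to reduce the resulting combinatorial count to the known type $A$ situation. First I would recall that, by work of Thomas \cite[Prop.~2]{Th07} (and of Zhu \cite{Zhu08} in the non--simply-laced case), the compatibility degree $(\alpha \,\vert\vert\, \beta)_{\mathcal{C}^m_\Phi}$ of two $m$-colored almost positive real Schur roots is computed by the functors $\mathrm{Ext}^i_{\mathcal{C}^m_\Phi}$ for $1 \le i \le m$, that is, it equals $\sum_{i=1}^m \dim \mathrm{Ext}^i_{\mathcal{C}^m_\Phi}(X_\alpha, X_\beta)$, where $X_\alpha, X_\beta$ denote the rigid indecomposable objects corresponding to $\alpha, \beta$ under the bijection between $\Phi^m_{\geq -1}$ and the indecomposables of $\mathcal{C}^m_\Phi$.

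Next I would apply Theorem~\ref{thm:geom_equiv}, which identifies $\mathcal{C}^m_{E_u}$ with the mesh category $\mathcal{C}(\Gamma^{n+3,m}_{1,k_1,k_2}, \tau_l^m)$ of colored $m$-diagonals. Under this equivalence $X_\alpha$ is sent to the colored $m$-diagonal $D_\alpha$, and each $\mathrm{Ext}^i$-space is computed inside the mesh category purely combinatorially: using the Serre functor $\tau\Sigma$ and the Auslander--Reiten formula one has $\mathrm{Ext}^i_{\mathcal{C}^m_\Phi}(X_\alpha, X_\beta) \cong \mathrm{Hom}(X_\alpha, \Sigma^i X_\beta)$, and Hom-spaces in a mesh category are governed by paths in the translation quiver modulo the mesh relations. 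This reduces the claim to matching, for each $i$, the combinatorics of mesh-paths from $D_\alpha$ to $\Sigma^i D_\beta$ with the geometric intersections of the constituent curves of $D_\alpha$ and $D_\beta$.

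For this matching I would exploit the $m$-power description of Proposition~\ref{prop_m_power}, which embeds $(\Gamma^{m}_{E_u},\tau^m)$ as a connected component of the $m$-th power of a tree-diagram quiver $\mathbb{Z}T_{r,s,t}/\tau^{-c}$ (twisted by $\rho^m$ in the $E_6$ case). The arrows in the $m$-power are exactly the sectional paths of length $m$, which is precisely the mechanism by which a single $m$-diagonal acquires its $m$ constituent curves; each such sectional path corresponds to one of the $m$-curves, so counting crossings of the $m$-curves of $D_\alpha$ with $D_\beta$ matches counting the relevant mesh-paths. On the underlying uncolored type $A$ polygon this is exactly the statement established by Thomas \cite[Prop.~2]{Th07} and Baur--Marsh \cite{BM}, that the Ext-dimensions record the number of crossings of $m$-diagonals; I would invoke this as the base case and propagate it through the embedding.

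Finally, I would account for the extra structure specific to type $E$: the red/blue orientation and green pairing of diagonals, together with the $\rho$-twist governing the $E_6$ model via $\tau_0$. The generic crossings transfer verbatim from the type $A$ count, so the main obstacle is the \emph{local analysis at the trivalent vertex} of $T_{r,s,t}$, where paired (green) diagonals appear and where the minimal clockwise rotations of Section~\ref{rotations} are modified by $\rho$. Here I would verify directly, from the explicit rotation rules, that a green diagonal $D_\alpha$ contributes a crossing with $D_\beta$ for each of its two constituent oriented curves that crosses $D_\beta$, and that the folding induced by $\rho$ in the $E_6$ case is compatible with the symmetry of the geometric intersection number, as dictated by the $(m+1)$-Calabi--Yau duality $\mathrm{Ext}^i(X_\alpha,X_\beta)\cong \mathrm{Ext}^{m+1-i}(X_\beta,X_\alpha)^*$. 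This branch-point bookkeeping, rather than the generic transfer, is where I expect the real work to lie.
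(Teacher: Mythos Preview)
Your opening moves agree with the paper: identify the compatibility degree with $\dim\mathrm{Ext}_{\mathcal{C}^m_\Phi}(X_\alpha,X_\beta)$ via \cite[Prop.~2]{Th07}, and transport along the equivalence of Theorem~\ref{thm:geom_equiv} to the mesh category of colored $m$-diagonals. Up to here you are doing exactly what the paper does.

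The divergence (and the gap) is your proposed reduction to type $A$ through the $m$-power embedding of Proposition~\ref{prop_m_power}. That proposition is a statement about stable translation \emph{quivers}: $(\Gamma^m_{E_u},\tau^m)$ sits as a connected component of $(\mathbb{Z}T_{r,s,t}/\tau^{-c})^m$. It says nothing about morphism spaces, and the ambient mesh category is attached to a tree $T_{r,s,t}$ with $r=2m-1$, $s=2m$, $t\in\{2m,3m,4m\}$, which for $m>1$ is not Dynkin and for which no ``$\mathrm{Ext}=$ crossing number'' theorem is available to invoke. Likewise, the fact that colored diagonals live in an $(n{+}3)$-gon does not mean their $\mathrm{Ext}$-groups are computed in a type $A$ category: the trivalent structure and the $\rho$-twist make the mesh relations genuinely of type $E$, so the results of \cite{Th07,BM} for type $A$ polygons are not a ``base case'' one can propagate. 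In short, the $m$-power step buys you a picture of the AR-quiver but not the $\mathrm{Ext}$-dimensions you need.

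The paper's route is more direct and avoids this detour: once the AR-quiver is identified with the quiver of $m$-diagonals (Theorem~\ref{thm:geom_equiv}), one computes the \emph{support} of $\mathrm{Ext}_{\mathcal{C}^m_\Phi}(-,-)$ on that quiver using the starting and ending functions of \cite[\S 8]{BMRRT}. This yields, for a fixed $D_\alpha$, an explicit region (described by rotations of $D_\alpha$ through curves) in which the indecomposables $D_\beta$ with nonzero $\mathrm{Ext}$ sit, together with the multiplicity at each vertex; the crossing count then drops out of this geometric description. For $m=1$ and $E_6$ this is \cite[Prop.~5.3]{L3}, and the argument here is its direct generalisation to $m>1$ and to $E_7,E_8$. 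So the missing ingredient in your plan is precisely this starting/ending-function analysis on the $E$-type quiver, in place of the attempted type $A$ reduction.
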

\begin{proof}
(Sketch). For type $E_6$ and when $m=1$ the claim is shown in \cite[Prop.5.3]{L3}. For $m$-cluster categories every such root corresponds to an indecomposable object in $\mathcal{ C}^m_{\Phi}$. The compatibility degree of $\alpha, \beta$ then agrees with the dimension of 
$\mathrm{Ext}_{\mathcal{ C}^m_{\Phi}}(I_\alpha,I_\beta)$ as described in \cite[Prop. 2]{Th07}. Here
$I_\alpha,I_\beta \in\mathcal{ C}^m_{\Phi}$ are the
indecomposable objects parametrized by $\alpha, \beta$. The dimension of these groups can then be found by computing the support of $\mathrm{Ext}_{\mathcal{ C}^m_{\Phi}}(-,-)$ on the AR-quiver of $\mathcal{C}_{\Phi}^m$. 
This is done via starting, resp.~ending functions, as described in \cite[\S.8.]{BMRRT}.
By Thm.~\ref{thm:geom_equiv} the AR-quiver of 
$\mathcal{ C}_{\Phi}^m$ is isomorphic to the quiver of $m$-diagonals.
This implies that there is a geometric description of the support of the $\mathrm{Ext}$-functor. 
This amounts in specifying rotations for the $m$-diagonal $D_{\alpha}$ 
through curves and generalizes the approach of \cite[\S.5]{L3} to the setting of $m$-diagonals.  
\end{proof}

For a regular $(n+3)$-gon $\Pi,$ $n\geq\mathrm{max}\{r+s+1;r+t+1\},$ let $({\Gamma}^{n+3}_{r,s,t}, \tau)$
be the stable translation quiver of diagonals
associated to colored diagonals in $\Pi$ defined above.

\begin{thm}\label{thm:mpower}
The following quivers are connected components of the
 $m$-power of the given stable translation quivers:
\begin{itemize}
  \setlength\itemsep{1em}
\item $
(\Gamma_{E_6}^{m},\tau_0^m)\subseteq(\Gamma_{2m-1,2m,2m}^{(6m+1)},\tau_0)^m$
\item $(\Gamma_{E_7}^{m},\tau^m)\subseteq(\Gamma_{2m-1,2m,3m}^{(9m+1)},\tau)^m$
\item $(\Gamma_{E_8}^{m},\tau^m)\subseteq(\Gamma_{2m-1,2m,4m}^{(15m+1)},\tau)^m$
\end{itemize}
\end{thm}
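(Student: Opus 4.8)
The plan is to deduce Theorem \ref{thm:mpower} from Proposition \ref{prop_m_power} by rewriting the repetitive-tree quotients $\mathbb Z T_{r,s,t}/(\cdots)$ appearing there in terms of the geometric colored-diagonal quivers $\Gamma_{r,s,t}^{n+3}$. The bridge between the two descriptions is the level-one geometric model of \cite{L3}: for a tree $T_{r,s,t}$ and an admissible regular $(n+3)$-gon, the stable translation quiver $(\Gamma_{r,s,t}^{n+3},\tau_l)$ of colored diagonals (Lemma \ref{lemma:stable}, with $\tau_l\in\{\tau,\tau_0\}$) is isomorphic, as a stable translation quiver, to a quotient of the repetitive quiver $\mathbb Z T_{r,s,t}$. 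Once these base isomorphisms are in place for the three relevant sets of parameters, the theorem follows by applying the $m$-power operation to both sides and invoking Proposition \ref{prop_m_power}.

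First I would record the three base identifications. The construction of \cite{L3} attaches to any tree $T_{r,s,t}$ and admissible polygon the quiver $(\Gamma_{r,s,t}^{n+3},\tau_l)$ of Lemma \ref{lemma:stable} and identifies it with the quotient of $\mathbb Z T_{r,s,t}$ by the full anti-clockwise rotation $\tau^{-(n+3)}$, twisted by the reflection in the symmetric case governed by $\tau_0$. Specialising to the three enlarged trees gives isomorphisms of stable translation quivers $(\Gamma_{2m-1,2m,2m}^{6m+1},\tau_0)\cong(\mathbb Z T_{2m-1,2m,2m}/\tau^{-(6m+1)}\rho^m,\tau)$, $(\Gamma_{2m-1,2m,3m}^{9m+1},\tau)\cong(\mathbb Z T_{2m-1,2m,3m}/\tau^{-(9m+1)},\tau)$, and $(\Gamma_{2m-1,2m,4m}^{15m+1},\tau)\cong(\mathbb Z T_{2m-1,2m,4m}/\tau^{-(15m+1)},\tau)$, the reflection factor $\rho^m$ in the first matching the reflection part of $\Sigma^m=(\rho\tau^{-6})^m$ recorded in Lemma \ref{lemma:AR}.

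Next I would use that the $m$-power construction of \cite{BM} is functorial with respect to isomorphisms of stable translation quivers: an isomorphism $\varphi\colon(\Gamma,\tau_l)\to(\Gamma',\tau')$ is a quiver isomorphism intertwining the translations, so it carries sectional paths of length $m$ to sectional paths of length $m$ and conjugates $\tau_l^m$ to $(\tau')^m$, hence induces an isomorphism $(\Gamma,\tau_l)^m\cong(\Gamma',\tau')^m$ (both being stable translation quivers by Lemma \ref{lemma_thm6.1}). Applying this to the three base isomorphisms yields $(\Gamma_{2m-1,2m,2m}^{6m+1},\tau_0)^m\cong(\mathbb Z T_{2m-1,2m,2m}/\tau^{-(6m+1)}\rho^m)^m$ and its two non-symmetric analogues. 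Proposition \ref{prop_m_power} exhibits $(\Gamma_{E_u}^m,\tau^m)$ as a connected component of the right-hand sides, so transporting this connected component through the isomorphism places it inside $(\Gamma_{2m-1,2m,2m}^{6m+1},\tau_0)^m$, resp.~$(\Gamma_{2m-1,2m,3m}^{9m+1},\tau)^m$ and $(\Gamma_{2m-1,2m,4m}^{15m+1},\tau)^m$. The translation on the $E_6$ component reads $\tau_0^m$ rather than $\tau^m$ because, under the chain of identifications in Lemmas \ref{lemma:AR} and \ref{lemma:isoquivers}, the Auslander--Reiten translation on $\Gamma_{E_6}^m$ corresponds on the geometric side to $\tau_0^m$; this is precisely the bookkeeping already settled for Lemma \ref{lemma:isoquivers}.

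The main obstacle I anticipate is the first step in the symmetric $E_6$ case: verifying the base identification with the correct reflection power $\rho^m$. One must check that the colored-diagonal quiver on the $(6m+1)$-gon, equipped with the non-standard translation $\tau_0$ and the associated minimal clockwise rotations of Section \ref{rotations}, glues up along the central line into $\mathbb Z T_{2m-1,2m,2m}/\tau^{-(6m+1)}\rho^m$ and not merely into the quotient twisted by $\rho$; tracking how the M\"obius-strip folding of Section \ref{rhotau} interacts with the $m$-fold rotation is where the reflection power must be pinned down. The non-symmetric $E_7,E_8$ cases are comparatively routine, since there $\tau_l=\tau$ is the ordinary rotation and no reflection intervenes, so the base identification is a direct specialisation of \cite{L3} and the functoriality argument applies verbatim.
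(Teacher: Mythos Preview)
Your proposal is correct and takes essentially the same approach as the paper, which simply combines Proposition~\ref{prop_m_power} with Lemma~\ref{lemma:isoquivers}. You unpack the second ingredient into the base identification $(\Gamma_{r,s,t}^{n+3},\tau_l)\cong\mathbb{Z}T_{r,s,t}/(\cdots)$ from \cite{L3} together with functoriality of the $m$-power, which is exactly what underlies Lemma~\ref{lemma:isoquivers}; the $\rho^m$ bookkeeping you flag as the main obstacle is likewise the only nontrivial point in the paper's terse argument.
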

\begin{proof}
The claim follows by combining Prop.~\ref{prop_m_power} with Lemma \ref{lemma:isoquivers}. Notice
that the components need not to be unique.
\end{proof}
Two observations are in order. First, Theorem \ref{thm:mpower} implies that colored almost positive roots in type $E_6,E_7$ and $E_8$ relate 
to almost positive roots in type 
\[
T_{2m-1,2m,2m}\ ,\  T_{2m-1,2m,3m} \ ,\ T_{2m-1,2m,4m}\ .
\]
Second, colored almost positive roots in type $E_6,E_7$ and $E_8$ also relate to almost positive roots in larger 
roots system of type $A$,
in the following sense.
On the one hand, colored almost positive roots in
type $E_6,E_7$ and $E_8$ correspond to $m$-colored diagonals in a certain polygon $\Pi$, via Thm.~\ref{thm:geom_equiv}.
Such $m$-colored diagonals are obtained
by judiciously combining
two sets of oriented $m$-diagonals in $\Pi$. 
By definition oriented $m$-diagonals are a subset of all oriented diagonals in $\Pi$.
On the other hand, it is shown in \cite[\S 2.5]{L3}
that oriented diagonals in any polygon $\Pi$, together with minimal rotations among them, define a 2-repetitive cluster category of type $A$.
Generally, a $p$-repetitive cluster category of type $\Phi$ is the triangulated orbit category:
\[
\mathcal{C}_{p\Phi}=\mathcal{D}/\langle (\tau^{-1}\Sigma)^{p} \rangle,
\]
first defined in \cite{Zhu11}. 
Since indecomposable objects in $\mathcal{C}_{p\Phi}$ 
correspond to $p$-copies of almost positive roots of type $\Phi$ and the claim follows.

\begin{example}
Let $\Phi$ be a root system of type $E_6$ and let 
$\Psi$ be a root system of type $A_{10}$.
The set of two-colored almost positive roots of type $E_6$ by definition is 
\[ \Phi^2_{\geq -1}=\{-\alpha_1,-\alpha_2,\dots,-\alpha_6\}\cup\Phi_0\cup\Phi_0, \]
where $\Phi_0$ is the set of positive roots in $\Phi$.
By the above argument, the set  $\Phi^2_{\geq -1}$ is contained in four copies (two for the red oriented diagonals, two for the blue oriented diagonals) of the set 
\[
\Psi_{\geq -1}=\{-\alpha_1,-\alpha_2,\dots,-\alpha_{10}\}\cup \{\alpha_i+\cdots+\alpha_j \, \mid \,1\leq i \leq j \leq 10\}
\]
of almost positive roots of type $A_{10}:$
\[
\big\{\Psi_{\geq -1} \cup \Psi_{\geq -1}\big\}
\cup\big\{\Psi_{\geq -1} \cup \Psi_{\geq -1}\big\}.
\]
The roots in $\big\{\Psi_{\geq -1} \cup \Psi_{\geq -1}\big\}$
are in one-to-one correspondence with the indecomposable objects in a $2$-repetitive cluster category of type $A_{10}.$
\end{example}

\section{Examples of small $m$-cluster categories}\label{sec:smallex}

\subsection{The 2-cluster category of type $E_6$}
Let $m=2$ and let $\Pi^{13}$ be a regular $13$-gon 
with vertices numbered as above by the group $\mathbb Z/13\mathbb Z$.
First, for $i\in\mathbb Z/13\mathbb Z$, let $\Pi^{12}_i$
be a regular $12$-gon inside $\Pi^{13}$ with vertices numbered by $i,i+2,i+2+1,\dots, i+12$. 
A diagonal $(i,j)$ of $\Pi^{13}$ 
is called an $2$-diagonal, if $(i,j)$ is an $2$-diagonal inside 
$\Pi^{12}_i\subset \Pi^{13}.$ This convention is similar to the one
adopted in the $D_n$ case in \cite{BMd}.
Second, we double the set of all $2$-diagonals of $\Pi$
and distinguish the sets using colors, red and blue, and
subscripts $R$, $B$.
E.g.\ $(i,j)_R=(j,i)_R$ denotes a red $2$-diagonal of $\Pi^{13}$
linking the vertex $i$ to $j$.
Third, for every vertex $i$ of $\Pi^{13}$ we form
the following green
colored $2$-diagonals:
\begin{align*}\itemsep0em
(i,i+4)_G=&(i+4,i)_G:=\{ (i,i+4)_R,(i+4,i)_B\}\\
(i,i+6)_G=&(i+6,i)_G:=\{ (i,i+6)_R,(i+6,i)_B\}
\end{align*}

Let $\Gamma^{13}_{1,2,2}$ be the quiver whose vertices 
are the $2$-diagonals of $\Pi^{13}$ and an arrow between two 
vertices is drawn whenever there is a minimal clockwise rotation linking them. No arrow is drawn otherwise.

The following two theorems are obtained by computations.

\begin{theorem}
A 2-cluster tilting object in type $F_4$ consists of two blue-red arrows in one of the graph in Figure \ref{fig:Pairs of blue-red arrows in F4 2} up to rotation and two green arrows in one of the graph in Figure \ref{fig:Pairs of green arrows in F4 2} up to rotation such that the blue-red arrows do not cross green arrows. The mutation of an arrow is replacing it with another arrow such that the resulting graph satisfies the above conditions.
\end{theorem}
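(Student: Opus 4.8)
The plan is to reduce the classification to the already-established geometric model for type $E_6$ and then descend to $F_4$ by folding. Recall that $\mathcal{C}^2_{F_4}$ is obtained from $\mathcal{C}^2_{E_6}$ by the standard folding argument following Lemma \ref{lemma:AR}, so that the Auslander--Reiten quiver of $\mathcal{C}^2_{F_4}$ is the folded quiver $\Gamma^{13}_{1,2}$ whose vertices are the $\rho$-orbits of colored $2$-diagonals of $\Pi^{13}$. Concretely these orbits are the green $2$-diagonals (each fixed by $\rho$) and the blue--red pairs $\{(i,j)_R,(j,i)_B\}$. By Theorem \ref{thm:geom_equiv} together with Proposition \ref{prop:mcatroots}, a $2$-cluster tilting object of $\mathcal{C}^2_{E_6}$ is the same datum as a maximal set of pairwise non-crossing colored $2$-diagonals, and mutation is the exchange of a single summand. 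First I would show that $2$-cluster tilting objects of $F_4$ correspond bijectively to the $\rho$-invariant maximal non-crossing sets in $\Pi^{13}$; this is exactly the content of the folding correspondence and lets me work entirely inside the $13$-gon.

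Next I would pin down the shape of any such symmetric configuration. A $\rho$-invariant maximal non-crossing set consists of $g$ green diagonals together with $p$ blue--red pairs, and as an object of $\mathcal{C}^2_{E_6}$ it has $g+2p=6$ indecomposable summands since the rank of $E_6$ is $6$ (cf. Theorem \ref{thm:Zhu_FR}). Its image under folding is a $2$-cluster tilting object of $F_4$, hence has $g+p=4$ summands because $F_4$ has rank $4$. Solving the two equations forces $g=p=2$, so every $2$-cluster tilting object of $F_4$ is built from exactly two green arrows and two blue--red arrows. The non-crossing condition upstairs then becomes, downstairs, the requirement that the two green arrows be mutually non-crossing, that the two blue--red arrows be mutually non-crossing, and --- the only surviving cross-color constraint --- that no blue--red arrow cross a green arrow.

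It then remains to enumerate, up to the rotation of $\Pi^{13}$, the admissible pairs of green arrows and the admissible pairs of blue--red arrows. Using the explicit green $2$-diagonals $(i,i+4)_G,(i,i+6)_G$ and the minimal clockwise rotation structure of $\Gamma^{13}_{1,2,2}$, I would list all non-crossing pairs of green diagonals and check that they reduce, modulo rotation, to the graphs of Figure \ref{fig:Pairs of green arrows in F4 2}; I would do the same for the $\rho$-symmetric non-crossing pairs of blue--red diagonals to recover Figure \ref{fig:Pairs of blue-red arrows in F4 2}. This finite check --- which can be carried out over the $780$ objects predicted by Theorem \ref{thm:Zhu_FR} and cross-validated against that count --- is the computational heart of the statement and the step I expect to be the main obstacle, since it requires tracking crossings of doubled oriented diagonals on the $13$-gon rather than invoking an abstract argument.

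Finally, the mutation rule is the geometric shadow of the general exchange property in $m$-cluster categories: any almost-complete $2$-rigid object admits exactly $m+1=3$ completions, and replacing one arrow by another so that the green and blue--red figure-types together with the three non-crossing conditions are restored realizes precisely this exchange. I would close by verifying that each such local replacement keeps the configuration inside the list produced above, which confirms that the combinatorial mutation matches mutation in $\mathcal{C}^2_{F_4}$.
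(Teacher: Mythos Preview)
Your approach is conceptually more ambitious than the paper's, which simply states that the theorem is ``obtained by computations'' and offers no further argument: the $780$ objects predicted by Theorem~\ref{thm:Zhu_FR} are enumerated directly, and the figure lists are read off from this enumeration.

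There is, however, a genuine gap in your reduction. You invoke Theorem~\ref{thm:geom_equiv} and Proposition~\ref{prop:mcatroots} to identify $2$-cluster tilting objects of $\mathcal{C}^2_{E_6}$ with maximal non-crossing sets of colored $2$-diagonals, but the paper explicitly warns (just before Proposition~\ref{prop:mcatroots}) that ``maximal sets of non-crossing $m$-colored diagonals describe only a \emph{subset} of all possible maximal compatible sets of colored almost positive roots in type $E$''. Proposition~\ref{prop:mcatroots} concerns real Schur roots and does not give a crossing criterion detecting all $\mathrm{Ext}$-vanishing; indeed the figure lists in the theorem (and in the companion $E_6$ Theorem~\ref{thm:2}) encode compatibilities among same-color pairs that are \emph{not} equivalent to non-crossing in $\Pi^{13}$. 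So your claim that ``the non-crossing condition upstairs then becomes, downstairs, the requirement that the two green arrows be mutually non-crossing, that the two blue--red arrows be mutually non-crossing, \ldots'' misreads the content of the figures: only the blue-red/green cross-color condition is pure non-crossing; the same-color conditions are the output of the computation, not a consequence of Proposition~\ref{prop:mcatroots}.

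A secondary point: the equation $g+2p=6$ presupposes that every $2$-cluster tilting object of $\mathcal{C}^2_{F_4}$ unfolds to a $\rho$-invariant $2$-cluster tilting object of $\mathcal{C}^2_{E_6}$. This is plausible but is not stated or proved in the paper; the folding remark after Lemma~\ref{lemma:AR} is about Auslander--Reiten quivers, not about lifting tilting objects. Without this, the clean $g=p=2$ count is not justified, and in any case the same conclusion falls out immediately from the direct enumeration the paper actually performs.
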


\begin{figure}
\includegraphics[width=0.7\textwidth]{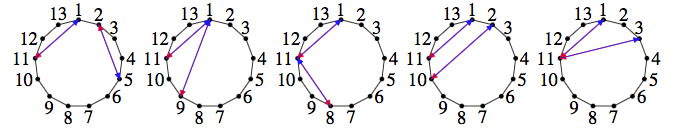}
\caption{All possible pairs of blue-red arrows appearing in a $2$-cluster tilting object in $\mathcal{C}_{F_4}^2$ up to rotation.}
\label{fig:Pairs of blue-red arrows in F4 2}
\end{figure}

\begin{figure}
\includegraphics[width=0.7\textwidth]{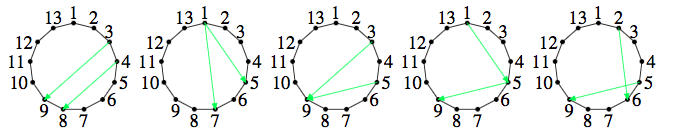}
\caption{All possible pairs of green arrows appearing in a $2$-cluster tilting object in $\mathcal{C}_{F_4}^2$ or $\mathcal{C}_{E_6}^2$ up to rotation.}
\label{fig:Pairs of green arrows in F4 2}
\end{figure}

\begin{theorem}
A $2$-cluster tilting object in type $E_6$ consists of $6$ arrows with colors red, blue, and green in a $13$-gon which satisfy the following properties.
\begin{enumerate}
\item The red arrows are of the form $[i,i+8]$ or $[i,i+10]$. The blue arrows are of the form $[i,i+3]$ or $[i,i+5]$. The green arrows are of the form $[i,i+4]$ or $[i,i+6]$.
\item If there are two or more red (resp.~blue, resp.~green) arrows, then any pair of red arrows are in one of the graphs in Figure \ref{fig:Pairs of red arrows in E6^2} (resp.~in the dual set of blue arrows with opposite orientation (not drawn), resp.~Figure \ref{fig:Pairs of green arrows in F4 2}) up to rotation.
\item The blue and red arrows do not cross green arrows.
\item Any pair of blue and red arrows is in one of the graphs in Figure \ref{fig:Pairs of blue and red arrows in E6^2} up to rotation.
\end{enumerate}
The mutation of an arrow is replacing it with another arrow such that the resulting graph satisfies the above conditions.
\end{theorem}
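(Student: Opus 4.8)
The plan is to transport the classification into the geometric model of colored $2$-diagonals set up at the beginning of this section, to read the four conditions off the compatibility degree, and to close the argument by a count against Theorem~\ref{thm:Zhu_FR}. First I would use Theorem~\ref{thm:geom_equiv} with $m=2$, which makes the mesh category of the quiver $\Gamma^{13}_{1,2,2}$ equivalent to $\mathcal{C}^2_{E_6}$; under this equivalence the indecomposable objects of $\mathcal{C}^2_{E_6}$ correspond bijectively to the colored $2$-diagonals of the $13$-gon $\Pi^{13}$ introduced above (the green diagonals $(i,i+4)_G,(i,i+6)_G$ together with the single red and single blue $2$-diagonals). Reading off these admissible shapes is exactly the content of item~(1), so (1) is merely the parametrization of the indecomposables and needs no separate argument. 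Since the rank of $E_6$ is $6$, and an $m$-cluster tilting object in a Dynkin $m$-cluster category has precisely $n$ indecomposable summands by the cluster tilting theory recollected from \cite{Th07,Zhu08}, every $2$-cluster tilting object is carried to a configuration of exactly $6$ colored arrows.

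Next I would identify $2$-rigidity with geometric compatibility. By Proposition~\ref{prop:mcatroots}, the compatibility degree $(\alpha\vert\vert\beta)_{\mathcal{C}^2_{E_6}}$ of two $2$-colored almost positive roots equals the number of intersections of the associated $2$-curves $D_\alpha,D_\beta$; hence the defining vanishing $\mathrm{Ext}^i_{\mathcal{C}^2_{E_6}}(T_j,T_k)=0$ for $i=1,2$ is equivalent to pairwise non-crossing of the corresponding colored $2$-diagonals. The core of the proof is then the explicit determination, for each of the finitely many pairs of admissible colored $2$-diagonals taken up to rotation of $\Pi^{13}$, of when this intersection number vanishes. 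Carrying this out color class by color class produces precisely the crossing rules of items (2), (3) and (4): the same-color rules (Figure~\ref{fig:Pairs of red arrows in E6^2} for red pairs, the dual picture for blue pairs, Figure~\ref{fig:Pairs of green arrows in F4 2} for green pairs), the rule in (3) that no blue or red diagonal crosses a green one, and the mixed blue--red rule of (4) recorded in Figure~\ref{fig:Pairs of blue and red arrows in E6^2}.

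This step is the main obstacle: for $2$-diagonals compatibility is strictly finer than naive non-crossing of chords in the polygon---as already flagged after Proposition~\ref{prop:mcatroots}, maximal non-crossing families account for only part of the maximal compatible families---so the intersection numbers must be obtained from the support of the $\mathrm{Ext}$-functor on the Auslander--Reiten quiver via starting and ending functions, rather than read directly from the picture. It is exactly here that the computer enumeration referred to before the statement enters, and where the $m=2$ case genuinely diverges from the purely pictorial compatibility available in type $A$.

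Finally I would pass from ``pairwise $2$-rigid'' to ``maximal $2$-rigid'', i.e.\ to $2$-cluster tilting, by counting. The previous step shows that a set of six admissible colored arrows is $2$-rigid if and only if it satisfies (1)--(4) pairwise; consequently every $2$-cluster tilting object yields such a configuration, and this assignment is injective because the equivalence of Theorem~\ref{thm:geom_equiv} is a bijection on indecomposables. By Theorem~\ref{thm:Zhu_FR} there are $16\,588$ such objects. Enumerating all six-element configurations of admissible colored arrows obeying (1)--(4) and checking that their number is again $16\,588$ then forces the injection to be a bijection, which simultaneously shows that each such configuration is maximal and hence a genuine $2$-cluster tilting object, completing both directions. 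The mutation statement follows by transporting through the same equivalence the fact that an almost complete $m$-cluster tilting object admits exactly $m+1=3$ completions: replacing a single arrow so that (1)--(4) are preserved is the geometric shadow of choosing such a completion, and the exchange is well defined since for each configuration and each chosen summand exactly one substitution keeps the remaining five arrows compatible.
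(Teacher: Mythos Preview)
Your proposal is essentially a detailed unpacking of what the paper actually does: the paper's own proof consists of the single sentence ``The following two theorems are obtained by computations,'' and your roadmap---transport to colored $2$-diagonals via Theorem~\ref{thm:geom_equiv}, read off pairwise compatibility from the $\mathrm{Ext}$-support on the Auslander--Reiten quiver (Proposition~\ref{prop:mcatroots}), tabulate the resulting rules color class by color class, and close by the count against Theorem~\ref{thm:Zhu_FR}---is exactly the computation being alluded to. So the overall approach is the same.

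There is one genuine slip in your final paragraph on mutation. You correctly invoke that an almost complete $2$-cluster tilting object has $m+1=3$ completions, but then conclude that ``exactly one substitution keeps the remaining five arrows compatible.'' These two statements contradict each other: with three completions in total, removing one summand leaves \emph{two} other valid replacements, not one. This is visible in the paper's own examples (Figure~\ref{fig:mutation of 1 9 in 2-cluster category E6}), where mutating a single arrow in a $2$-cluster produces two new clusters on the right. So the mutation description in the theorem is not a uniquely determined exchange as in the $m=1$ case; it is the statement that the set of valid replacements is exactly the set of arrows restoring conditions (1)--(4), and there are two such (besides the original) at each step.
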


\begin{figure}
\includegraphics[width=0.7\textwidth]{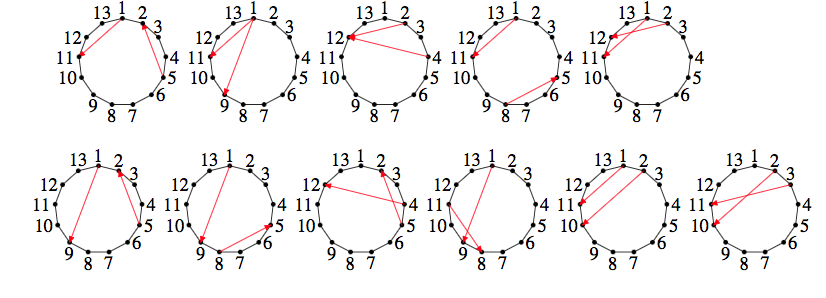}
\caption{All possible pairs of red arrows appearing in a $2$-cluster tilting object in $\mathcal{C}_{E_6}^2$ up to rotation. Dually, one finds a lists of all possible pairs of blue arrows with opposite orientation (not drawn) appearing in a $2$-cluster tilting object in $\mathcal{C}_{E_6}^2$ up to rotation.}
\label{fig:Pairs of red arrows in E6^2}
\end{figure}

\begin{figure}
\includegraphics[width=0.6\textwidth]{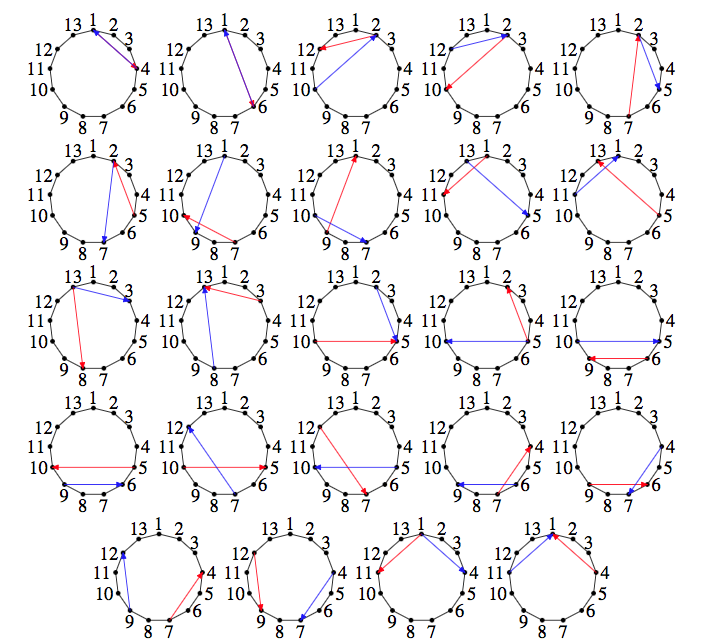}
\caption{All possible pairs of one blue and one red arrows appearing in a $2$-cluster tilting object in $\mathcal{C}_{E_6}^2$ up to rotation.}
\label{fig:Pairs of blue and red arrows in E6^2}
\end{figure}

\begin{figure}
\includegraphics[width=0.7\textwidth]{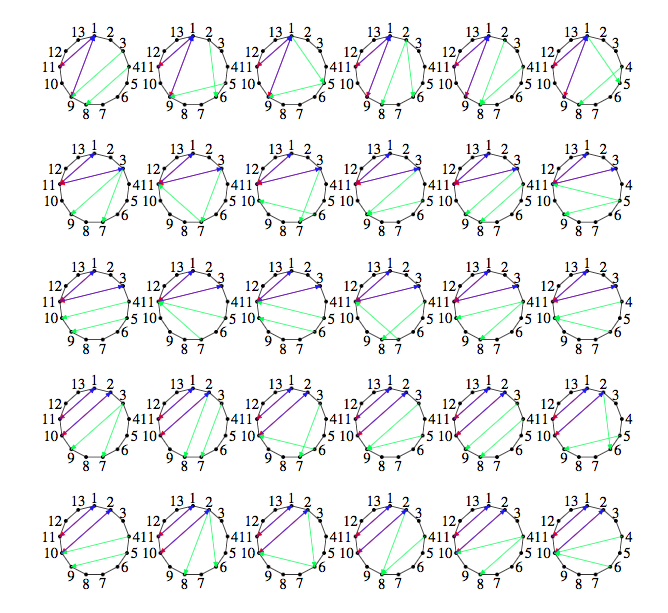}
\caption{Examples of $2$-cluster tilting objects in $\mathcal{C}_{F_4}^2$.}
\label{fig:examples of 2-clusters in F4}
\end{figure}

\begin{example}
In Figure \ref{fig:mutation of 5 11 in 2-cluster category F4}, the mutation of the arrow $[5,11]_P$ in the left cluster gives the clusters on the right.
Similarly, in Figure \ref{fig:mutation of 1 9 in 2-cluster category E6}, the mutation of the arrow $[1,9]_R$ in the left cluster gives the clusters on the right.
\end{example}
\begin{figure}
\includegraphics[width=0.2\textwidth]{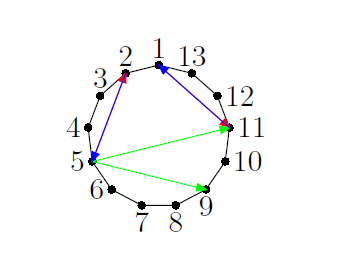}
\includegraphics[width=0.3\textwidth]{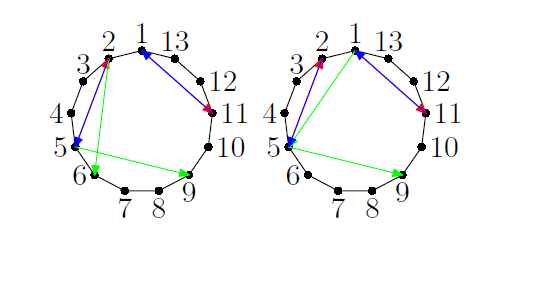}
\caption{An example of mutations in $\mathcal{C}_{F_4}^2$. }
\label{fig:mutation of 5 11 in 2-cluster category F4}
\end{figure}
\begin{figure}
\includegraphics[width=0.2\textwidth]{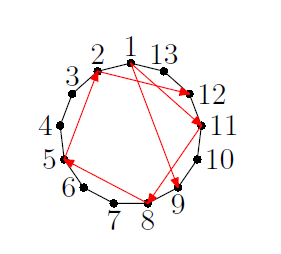}
\includegraphics[width=0.3\textwidth]{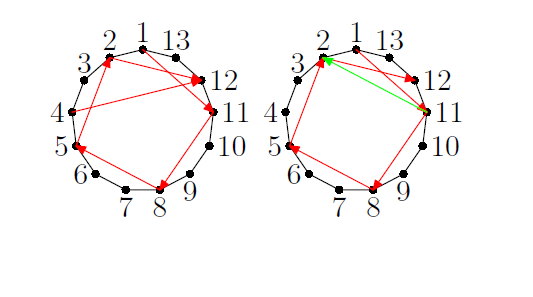}
\caption{An example of mutations in $\mathcal{C}_{E_6}^2$.}
\label{fig:mutation of 1 9 in 2-cluster category E6}
\end{figure}

\section{Cluster tilting objects in $\mathcal{C}_{F_4}$, $\mathcal{C}_{E_6}$, $\mathcal{C}_{E_7}$, $\mathcal{C}_{E_8}$}\label{sec:ct}
In this section we provide new descriptions of all cluster 
tilting objects in $\mathcal{C}_{F_4}$, $\mathcal{C}_{E_6}$ 
extend the results obtained in \cite{L3}.
These descriptions are also new for symmetric-clusters 
and clusters in $\mathbb C[{\rm Gr}(3,7)]$, 
via \cite[Prop.4.1]{BMRRT} together with \cite[Thm.5]{Sco}.

\subsection{Cluster tilting objects in $\mathcal{C}_{F_4}$ and $\mathcal{C}_{E_6}$}
Theorem \ref{thm:1} and Theorem \ref{thm:2}
are obtained by direct computations.
\begin{thm}\label{thm:1}
A cluster tilting object in type $F_4$ consists of two blue-red arrows, as in Figure \ref{fig: cluster tilting blue-red arrows} up to rotation, and two green arrows, as in Figure \ref{fig: cluster tilting green arrows} up to rotation. The arrows always combine, in such a way that the blue-red arrows do not cross green edges. However, blue-red arrows and green arrows can overlap.

The mutation of a blue-red arrow is to replace it with a blue-red arrow which satisfies the above conditions. The mutation of a green arrow is to replace it with a green arrow which satisfies the above conditions. 
\end{thm}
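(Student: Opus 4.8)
The plan is to transport the statement into the colored-diagonal model and then reduce it to a finite verification. First I would use the folding argument recorded after Lemma~\ref{lemma:AR}, together with the construction of the folded quiver $\Gamma^{7}_{1,2}$, to identify $\mathcal{C}_{F_4}$ with the mesh category of $\Gamma^{7}_{1,2}$. Under this identification the indecomposable objects are the $\rho$-orbits of colored diagonals of the heptagon $\Pi^{7}$: the $\rho$-fixed green diagonals $(i,i+2),(i,i+3)$ (the green arrows) and the two-element orbits $\{[i,j]_R,[j,i]_B\}$ (the blue-red arrows); equivalently, a cluster tilting object of $\mathcal{C}_{F_4}$ is a $\rho$-invariant cluster tilting object of $\mathcal{C}_{E_6}$. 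By the $m=1$ case of Proposition~\ref{prop:mcatroots}, namely \cite[Prop.~5.3]{L3}, two indecomposables are compatible exactly when the associated diagonals do not cross, so cluster tilting objects are precisely the maximal $\rho$-invariant sets of pairwise non-crossing colored diagonals in $\Pi^{7}$.

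Next I would fix the number of arrows of each type by a counting argument. A cluster tilting object of $\mathcal{C}_{E_6}$ has exactly six summands, and a $\rho$-invariant one splits into $g$ green diagonals (fixed by $\rho$) and $p$ blue-red pairs (since $\rho$ exchanges red and blue, these form two-element orbits), so that $g+2p=6$. On the folded side a cluster tilting object of $\mathcal{C}_{F_4}$ has exactly $\mathrm{rank}(F_4)=4$ summands, one per $\rho$-orbit, so $g+p=4$. Subtracting gives $p=g=2$, so every cluster tilting object consists of exactly two green and two blue-red arrows, as asserted.

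The remaining content --- determining which non-crossing configurations of two green and two blue-red arrows actually occur --- is the finite computation the theorem refers to, and it is the main obstacle. I would organize it by relative position in $\Pi^{7}$. Each of the $14$ diagonals of $\Pi^{7}$ carries exactly one green and one blue-red arrow, distinguished only by the orientation of their red/blue components, which is precisely why a green and a blue-red arrow sharing a diagonal overlap rather than cross; keeping track of these orientations is the delicate part of the bookkeeping. I would first list the non-crossing pairs of green diagonals, then the admissible pairs of blue-red arrows (checking the red and blue components, which are interchanged by $\rho$), and finally impose that each blue-red arrow crosses no green arrow, while allowing coincidence of underlying diagonals. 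Running through the finitely many rotation classes produces exactly the pictures of Figure~\ref{fig: cluster tilting blue-red arrows} and Figure~\ref{fig: cluster tilting green arrows} together with the stated compatibility. As a check, the resulting list should reproduce the $F_4$ cluster number $\prod_i\frac{h+e_i+1}{e_i+1}=105$ from Theorem~\ref{thm:Zhu_FR}; since $7$ is prime and a four-element configuration cannot be a union of full size-$7$ rotation orbits, the group $\mathbb{Z}/7$ acts freely, so this amounts to $15$ rotation classes.

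Finally, for the mutation statement I would invoke that in a Dynkin cluster category every summand of a cluster tilting object has a unique exchange partner. Because this exchange is $\rho$-equivariant, mutating a $\rho$-fixed (green) summand returns a green summand and mutating a blue-red orbit returns a blue-red orbit; geometrically it is the flip replacing the chosen diagonal by the unique diagonal of the same type completing the remaining arrows to a cluster tilting object, which is exactly the operation described in the statement.
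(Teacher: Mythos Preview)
Your overall strategy --- pass to the diagonal model, pin down the $2+2$ split by a counting argument, then enumerate --- is reasonable, and the numerical check $105=7\cdot 15$ is correct. But there is a genuine gap at the step where you invoke Proposition~\ref{prop:mcatroots} (equivalently \cite[Prop.~5.3]{L3}) to conclude that ``two indecomposables are compatible exactly when the associated diagonals do not cross.'' That is not what the proposition says: it equates the compatibility degree with the number of \emph{curves} of $D_{\alpha}$ meeting $D_{\beta}$, where the curves are produced by the rotation procedure of \cite[\S5]{L3}, not with naive crossings of line segments in the heptagon. The paper states explicitly, in the paragraph introducing Proposition~\ref{prop:mcatroots}, that maximal non-crossing sets of colored diagonals describe \emph{only a subset} of the cluster tilting objects in type $E$. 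And the very statement you are proving reflects this: compatibility of two blue-red arrows is governed by the finite list in Figure~\ref{fig: cluster tilting blue-red arrows}, not by a non-crossing condition; only the blue-red/green interaction is phrased as ``do not cross.'' So your characterisation ``cluster tilting objects are precisely the maximal $\rho$-invariant sets of pairwise non-crossing colored diagonals'' is false, and the enumeration built on it would start from the wrong compatibility relation.

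A secondary point: the identification ``cluster tilting object of $\mathcal{C}_{F_4}$ $=$ $\rho$-invariant cluster tilting object of $\mathcal{C}_{E_6}$,'' on which your equations $g+2p=6$ and $g+p=4$ rest, is plausible but is not established in the paper; the folding remark after Lemma~\ref{lemma:AR} concerns the shape of the AR-quiver, not Ext-vanishing or tilting. You would need to argue that $\mathrm{Ext}^1_{\mathcal{C}_{F_4}}$ between orbit-objects is detected by $\mathrm{Ext}^1_{\mathcal{C}_{E_6}}$ between their constituents before that counting becomes legitimate. The paper's own proof sidesteps all of this: it simply records that Theorems~\ref{thm:1} and~\ref{thm:2} are ``obtained by direct computations,'' i.e.\ an exhaustive calculation of the Ext-supports on the AR-quiver and a listing of all maximal rigid objects, with the figures summarising the outcome.
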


\begin{figure}
\includegraphics[width=0.5\textwidth]{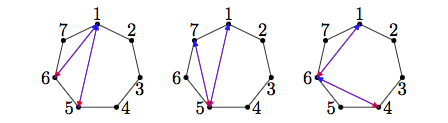}
\caption{All possible pairs of blue-red arrows in a cluster 
tiling object in $\mathcal{C}_{F_4}$ up to rotation.}
\label{fig: cluster tilting blue-red arrows}
\end{figure}

\begin{figure}
\includegraphics[width=0.5\textwidth]{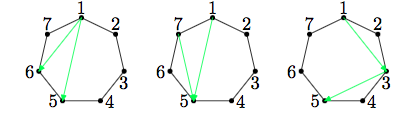}
\caption{All possible pairs of green arrows in a cluster tiling object in $\mathcal{C}_{F_4}$ or $\mathcal{C}_{E_6}$ up to rotation.}
\label{fig: cluster tilting green arrows}
\end{figure}

Figure \ref{fig:all cluster tilting object in F4 up to rotation} are all cluster tilting objects (up to rotation) in type $F_4$. 

\begin{figure}
\includegraphics[width=0.8\textwidth]{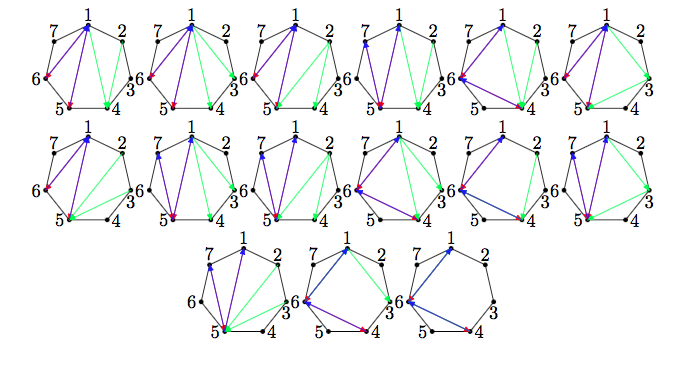}
\caption{All cluster tilting objects in $\mathcal{C}_{F_4}$ up to rotation. All clusters have four diagonals, but some are overlapped.}
\label{fig:all cluster tilting object in F4 up to rotation}
\end{figure}

\begin{remark}
In Figure \ref{fig:all cluster tilting object in F4 up to rotation}, the diagonals $[1,6]$ and $[4,6]$ in the last cluster have both a green arrow and a blue-red arrow. The diagonal $[1,6]$ in the second cluster in the third line has both a green arrow and a blue-red arrow.
\end{remark}

\begin{thm}\label{thm:2}
A cluster tilting object in type $E_6$ consists of $6$ arrows with colors red, blue, and green in a heptagon (the vertices are labelled clockwise) which satisfy the following properties.

\begin{enumerate}
\item The red edges are of the form $[i,i+4]_R$ or $[i,i+5]_R$. The blue edges are of the form $[i,i+2]_B$ or $[i,i+3]_B$. The green edges are of the form $[i,i+2]_P$ or $[i,i+3]_P$. 

\item If there are two or more red (resp.~blue, green) arrows, then any pair of the red arrows is in one of the graph in Figure \ref{fig: E6 cluster tilting red arrows} (resp.~in the
dual set of blue arrows with opposite orientation,  resp.~Figure \ref{fig: cluster tilting green arrows}) up to rotation.

\item The blue and red arrows do not cross green arrows. A blue arrow can overlap with a green arrow and a red arrow can be the opposite arrow of a green arrow. 

\item Any pair of blue and red arrows is in one of the graphs in Figure \ref{fig: E6 cluster tilting blue and red arrows} up to rotation.
\end{enumerate}

A mutation of an arrow in a cluster is replacing the diagonal with another arrow such that the resulting graph satisfies all the above conditions. 
\end{thm}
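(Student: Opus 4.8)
The plan is to transport the statement into the combinatorics of colored diagonals via the categorical equivalence of Theorem~\ref{thm:geom_equiv}. Specializing to $m=1$, that theorem identifies $\mathcal{C}_{E_6}$ with the mesh category $\mathcal{C}(\Gamma_{1,2,2}^{7},\tau_0)$, whose indecomposable objects are exactly the colored diagonals of the heptagon recorded in the set $P_{1,2,2}$ of Section~\ref{se:geo1}. Reading off this set, up to relabelling of the oriented endpoints, returns precisely the three families appearing in condition~(1): the red diagonals $[i,i+4]_R,[i,i+5]_R$, the blue diagonals $[i,i+2]_B,[i,i+3]_B$, and the green paired diagonals $[i,i+2]_P,[i,i+3]_P$. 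Thus condition~(1) is not an assertion to be proved but a description of the indexing set under the equivalence, and I would open the proof by recording this dictionary, noting in particular that $P_{1,2,2}$ has $42$ elements, matching the number of indecomposables in $\mathcal{C}_{E_6}$.

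Next I would reduce rigidity to a purely geometric non-crossing condition. Since $\mathcal{C}_{E_6}$ is $2$-Calabi--Yau---the $m=1$ instance of the $(m+1)$-Calabi--Yau property recalled in Section~\ref{se:rep}---a cluster tilting object is the same as a maximal $\mathrm{Ext}^1$-orthogonal, hence maximal rigid, object, and the number of its indecomposable summands equals the rank $6$ of $E_6$. By Proposition~\ref{prop:mcatroots} the compatibility degree of two colored almost positive roots, equivalently $\dim\mathrm{Ext}^1$ between the indecomposables they parametrize, coincides with the number of crossings of the associated diagonals. Consequently a family of six colored diagonals underlies a cluster tilting object if and only if the diagonals are pairwise non-crossing and the family is maximal with this property. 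This step converts the representation-theoretic statement into a combinatorial one; everything that remains is an explicit enumeration of pairwise non-crossing patterns.

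The core of the argument is then a finite case analysis, one pair of colors at a time, of when two diagonals from the three families can coexist without crossing. For two red diagonals I would list the admissible relative positions up to the rotation $\tau$; these are exactly the configurations of Figure~\ref{fig: E6 cluster tilting red arrows}. The blue--blue case follows formally by applying the order-two automorphism $\rho$ of Section~\ref{rhotau}, which interchanges red and blue and reverses orientation, yielding the dual list asserted in condition~(2). The green--green case reduces to the pairs of Figure~\ref{fig: cluster tilting green arrows} and the mixed red--blue case to Figure~\ref{fig: E6 cluster tilting blue and red arrows}, establishing condition~(4); condition~(3) records the crossing rules between a green diagonal and a single red or blue one.

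The main obstacle, and the part that demands genuine care rather than routine checking, is the green case. A green diagonal $[i,j]_G$ is by construction the superposition $\{[i,j]_R,[j,i]_B\}$ of a red-oriented and a blue-oriented diagonal, so its crossings with a single red or blue diagonal must be analysed componentwise, with the orientation conventions of the minimal clockwise rotations of Section~\ref{rotations} tracked scrupulously. This is exactly what produces the asymmetric allowances in condition~(3): a blue single diagonal may share its underlying segment with the blue component of a green one without crossing it (the stated overlap), whereas a red single diagonal is admissible only as the orientation reversal of the red component (the stated opposite arrow). Once the six tables of admissible pairs are in place, I would close by verifying maximality and completeness---that no seventh diagonal can be adjoined to a configuration satisfying~(1)--(4), and that the number of such configurations equals the count $833$ predicted for $E_6$, $m=1$, by Theorem~\ref{thm:Zhu_FR}. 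Matching this count certifies that conditions~(1)--(4) cut out neither too few nor too many objects, and the description of mutation as arrow-replacement preserving~(1)--(4) is then forced by the standard one-summand exchange for cluster tilting objects.
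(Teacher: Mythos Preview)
Your reduction hinges on reading Proposition~\ref{prop:mcatroots} as asserting that $\dim\mathrm{Ext}^1_{\mathcal{C}_{E_6}}(X,Y)$ equals the number of geometric crossings between the colored diagonals $D_X$ and $D_Y$. The paper says otherwise. Immediately before Proposition~\ref{prop:mcatroots}, Section~\ref{se:roots} states that ``maximal sets of non-crossing $m$-colored diagonals describe only a \emph{subset} of all possible maximal compatible sets of colored almost positive roots in type~$E$,'' and that ``to describe the remaining sets is much more difficult.'' Proposition~\ref{prop:mcatroots} itself speaks of the number of ``$m$-curves of $D_\alpha$ intersecting with $D_\beta$,'' a notion defined via rotations and starting/ending functions rather than literal intersection of segments, and it does not collapse to the naive crossing test you invoke. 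In type~$E$, pairwise non-crossing of colored diagonals is \emph{not} equivalent to $\mathrm{Ext}^1$-orthogonality, so your claimed equivalence ``six colored diagonals form a cluster tilting object iff they are pairwise non-crossing and maximal with this property'' fails.

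This is already visible in the shape of the theorem: condition~(3) is a genuine non-crossing clause, but conditions~(2) and~(4) are not---they are explicit finite lists, and the paper states just before Theorem~\ref{thm:1} that both theorems ``are obtained by direct computations.'' Those figures record exactly the pairs $(D,D')$ of the given colors with $\mathrm{Ext}^1(D,D')=0$, found by computing Hom-hammocks on the Auslander--Reiten quiver of $\mathcal{C}_{E_6}$ (equivalently by exhaustive mutation), not by enumerating non-crossing configurations. Your case analysis would therefore produce the wrong lists for red--red and blue--red pairs, and the resulting count of sextuples would not match~$833$. The paper's argument is a brute-force verification: compute $\mathrm{Ext}^1$ for every pair among the $42$ indecomposables, tabulate the compatible pairs color by color (up to rotation and, for blue, the $\rho$-duality), and check that the maximal compatible sextuples number~$833$.
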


\begin{figure}
\includegraphics[width=0.7\textwidth]{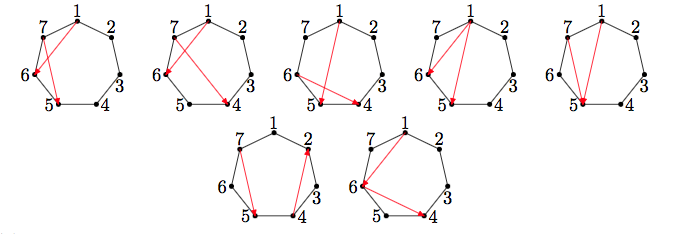}
\caption{All possible pairs of red arrows in a cluster tiling object in $\mathcal{C}_{E_6}$ up to rotation.
Dually, one finds a lists of all possible pairs of blue arrows with opposite orientation (not drawn) appearing in a cluster tilting object in $\mathcal{C}_{E_6}$ up to rotation.}
\label{fig: E6 cluster tilting red arrows}
\end{figure}

\begin{figure}
\includegraphics[width=0.8\textwidth]{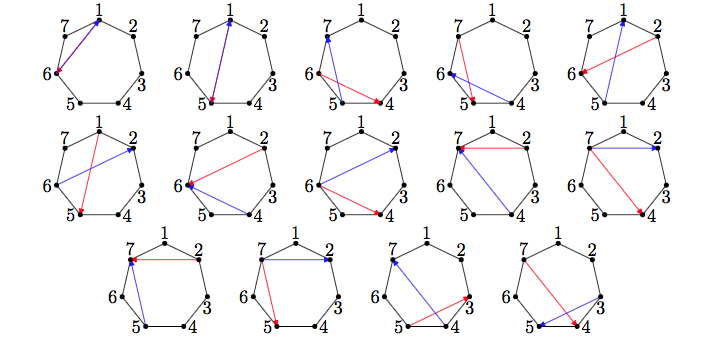}
\caption{All possible pairs of one blue and one red arrows in a cluster tiling object in $\mathcal{C}_{E_6}$ up to rotation.}
\label{fig: E6 cluster tilting blue and red arrows}
\end{figure}

In Figure \ref{fig:examples of clusters E6} thirty examples of cluster tilting objects in $\mathcal{C}_{E_6}$, are given. 
More cluster tilting objects can be 
deduced from them via $\frac{\pi}{7}$-rotations. 

\begin{figure}
\includegraphics[width=0.7\textwidth]{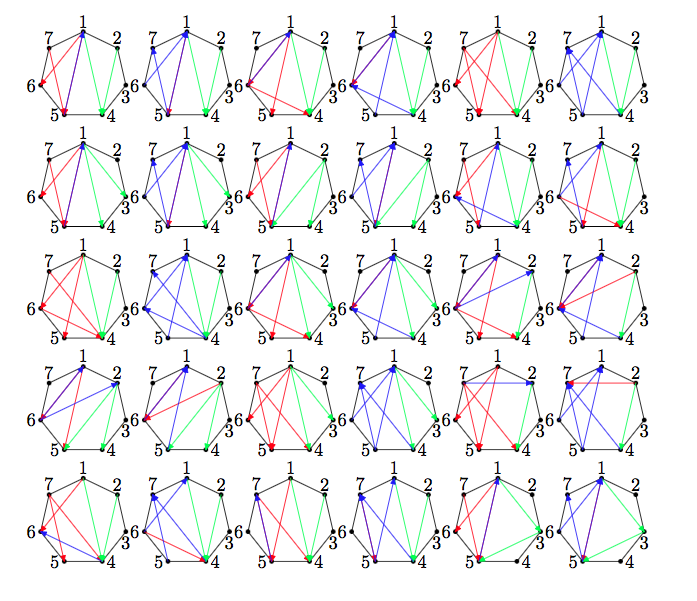}
\caption{Examples of clusters tilting objects in $\mathcal{C}_{E_6}$.}
\label{fig:examples of clusters E6}
\end{figure}

\subsection{Cluster tilting objects in $\mathcal{C}_{E_7}$ and $\mathcal{C}_{E_8}$}

The cases of types $E_7$, $E_8$ are complicated and only 
some examples of cluster tilting objects and their mutations will be provided.

\begin{example}
In Figure \ref{fig:mutation of 9 5 in cluster category E7}, the mutation of the arrow $[9,5]_R$ in the left cluster gives the cluster on the right hand side. In Figure \ref{fig:mutation of 3 5 in cluster category E7}, the mutation of the arrow $[3,5]_R$ in the left cluster gives the cluster on the right hand side.

Similarly, in Figure \ref{fig:mutation of 9 13 in cluster category E8}, 
the mutation of the arrow $[9,13]_R$ in the left cluster gives the cluster on the right hand side. In Figure \ref{fig:mutation of 10 12 in cluster category E8}, 
the mutation of the arrow $[10,12]_R$ in the left cluster gives the cluster on the right hand side.
\end{example}

\begin{figure}
\begin{minipage}{.45\textwidth}
\includegraphics[width=1.1\textwidth]{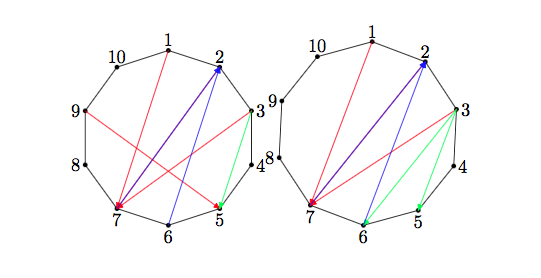}
\caption{A mutation in $\mathcal{C}_{E_7}$. }
\label{fig:mutation of 9 5 in cluster category E7}
\end{minipage}
\begin{minipage}{.45\textwidth}
\includegraphics[width=1.1\textwidth]{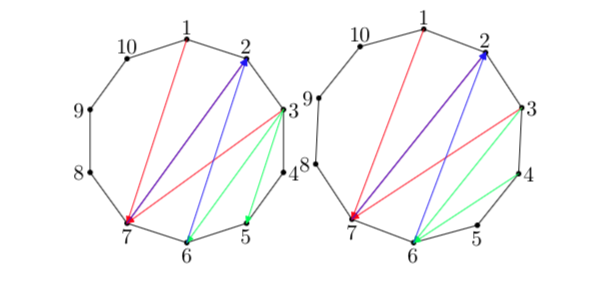}
\caption{A mutation in $\mathcal{C}_{E_7}$. }
\label{fig:mutation of 3 5 in cluster category E7}
\end{minipage}
\end{figure}

\begin{figure}
\begin{minipage}{.45\textwidth}
\includegraphics[width=1.1\textwidth]{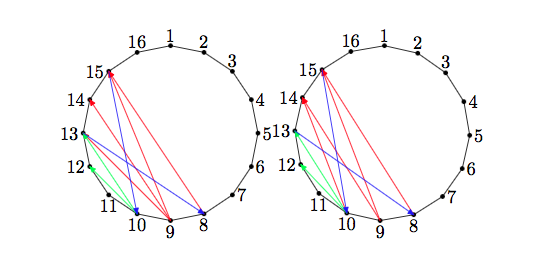}
\caption{A mutation in $\mathcal{C}_{E_8}$. }
\label{fig:mutation of 9 13 in cluster category E8}
\end{minipage}
\begin{minipage}{.45\textwidth}
\includegraphics[width=1.1\textwidth]{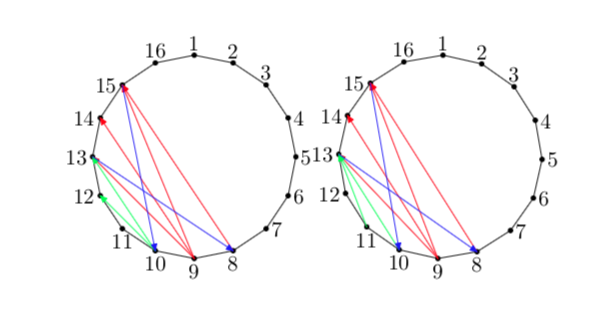}
\caption{A mutation in $\mathcal{C}_{E_8}$. }
\label{fig:mutation of 10 12 in cluster category E8}
\end{minipage}
\end{figure}

\section{Semi-standard Young tableaux and colored diagonals} \label{sec:tableaux and colored oriented diagonals}

In this section we link semi-standard Young tableaux to indecomposable objects of cluster categories of type $E_6,E_7$ and $E_8$. To do so, we exploit the fact that indecomposable objects in these categories 
are in 1-1 correspondence with almost positive roots in the root system of the same type
\cite[Prop.~4.1]{BMRRT} and hence with cluster variables in cluster algebras of the same type \cite{FZ}.  We then use the fact that every cluster variable in $\mathbb{C}[{\rm Gr}(n,m)]$ corresponds to some tableau $T \in {\rm SSYT}(n, [m])$, as described in 
\cite{CDFL}.

\subsection{Tableaux and colored diagonals in $\mathcal{C}_{E_6}$}

\begin{de}
For $T \in {\rm SSYT}(n, [m])$ which corresponds to a cluster variable in 
the ring $\mathbb C[\mathrm{Gr}(n,m)],$ we define $\tau(T)$ to be the tableau in $ {\rm SSYT}(n, [m])$  such that $\tau({\rm ch}(T))$ is equal to ${\rm ch}(\tau(T))$ (possibly multiply by some frozen variables). 
\end{de}

There are $28$ rank $1$ cluster variables and $14$ rank $2$ cluster variables in $\mathcal{C}_{E_6}$. 

In \cite{Sco}, Scott studied the correspondence between almost positive roots and cluster variables in type $E_6$, $E_8$ explicitly. The cluster 
\[
{\bf x} = \left( \left(\begin{array}{c} 3\\ 4\\ 6 \end{array}\right), \left(\begin{array}{c} 2\\ 4\\ 6 \end{array}\right), \left(\begin{array}{cc} 1&2\\ 4&5\\ 6&7 \end{array}\right), \left(\begin{array}{cc} 1&2\\ 3&5\\ 6&7 \end{array}\right), \left(\begin{array}{c} 1\\ 4\\ 7 \end{array}\right),  \left(\begin{array}{c} 2\\ 3\\ 7 \end{array}\right) \right)
\]
corresponds to the negative simple roots $(-\alpha_6, -\alpha_5, -\alpha_4, -\alpha_3, -\alpha_2,  -\alpha_1)$. We identify the cluster $([1,6]_R, [1,5]_R, [2,5]_G, [5,1]_B, [2,4]_G,  [6,1]_B)$ with ${\bf x}$. 
In this way, the following two maps agree:
\[
\tau \left(\begin{array}{c} 3\\ 4\\ 6 \end{array}\right) = \left(\begin{array}{c} 2\\ 4\\ 5 \end{array}\right)
\longleftrightarrow \tau([1,6]_R) = \rho([7,5]_R) =[5,7]_B.
 \]
In this way, one can 
associate to every colored diagonal a tableaux, by repeatedly applying the $\tau$-twist.
Below, we describe this correspondence explicitly,  see also
Figures \ref{fig: AR quiver E6} and \ref{fig: AR quiver E6 tableau}. 
We denote by $T_D$ the tableau corresponding to a diagonal $D$. 
We also write $D=T_D$.

For red diagonals, we have 
\begin{align*}
& [1,6]_R = \left(\begin{array}{c} 3\\ 4\\ 6 \end{array}\right), [2,7]_R = \left(\begin{array}{c} 1\\ 5\\ 7 \end{array}\right), [3,1]_R = \left(\begin{array}{c} 2\\ 6\\ 7 \end{array}\right), [4,2]_R = \left(\begin{array}{c} 1\\ 3\\ 4 \end{array}\right), [5,3]_R = \left(\begin{array}{c} 2\\ 3\\ 5 \end{array}\right), \\
& \scalemath{0.97}{ [6,4]_R = \left(\begin{array}{c} 4\\ 6\\ 7 \end{array}\right), [7,5]_R = \left(\begin{array}{c} 1\\ 5\\ 6 \end{array}\right), [1,5]_R = \left(\begin{array}{c} 2\\ 4\\ 6 \end{array}\right), [2,6]_R = \left(\begin{array}{cc} 1 & 3\\ 4 & 5\\ 6& 7 \end{array}\right),[3,7]_R = \left(\begin{array}{c} 2\\ 5\\ 7 \end{array}\right), } \\
& [4,1]_R = \left(\begin{array}{cc} 1 & 3\\ 2 & 6\\ 4& 7 \end{array}\right),[5,2]_R = \left(\begin{array}{c} 1\\ 3\\ 5 \end{array}\right), [6,3]_R = \left(\begin{array}{cc} 2 & 4\\ 3 & 6\\ 5& 7 \end{array}\right),[7,4]_R = \left(\begin{array}{c} 1\\ 4\\ 6 \end{array}\right).
\end{align*}

For blue diagonals, we have
\begin{align*}
& [6,1]_B = \left(\begin{array}{cc} 2 \\ 3 \\ 7 \end{array}\right), [7,2]_B = \left(\begin{array}{c} 1\\ 2\\ 4 \end{array}\right), [1,3]_B = \left(\begin{array}{c} 3\\ 5\\ 6 \end{array}\right), [2,4]_B = \left(\begin{array}{c} 4\\ 5\\ 7 \end{array}\right), [3,5]_B = \left(\begin{array}{c} 1\\ 2\\ 6 \end{array}\right), \\
& \scalemath{0.93}{ [4,6]_B = \left(\begin{array}{c} 1\\ 3\\ 7 \end{array}\right), [5,7]_B = \left(\begin{array}{c} 2\\ 4\\ 5 \end{array}\right), [5,1]_B = \left(\begin{array}{cc} 1&2\\ 3&5\\ 6&7 \end{array}\right), [6,2]_B=\left(\begin{array}{c} 2\\ 4\\ 7 \end{array}\right), [7,3]_B = \left(\begin{array}{cc} 1 & 3\\ 2 & 5\\ 4& 6 \end{array}\right), } \\
& [1,4]_B = \left(\begin{array}{c} 3\\ 5\\ 7 \end{array}\right), [2,5]_B = \left(\begin{array}{cc} 1 & 4\\ 2 & 6\\ 5& 7 \end{array}\right),[3,6]_B = \left(\begin{array}{c} 1\\ 3\\ 6 \end{array}\right), [4,7]_B = \left(\begin{array}{cc} 1 & 2\\ 3 & 4\\ 5& 7 \end{array}\right).
\end{align*}

For a sequence of numbers $i_1 < \ldots < i_m$, we say that $i_j$ ($j \in [m]$) is the predecessor of $i_{j+1}$ and $i_{j+1}$ is the successor of $i_j$, where we use the convention that $i_{m+1}=i_1$. Denote by $s(T)$ the sequence (from small to large) consisting of entries of a tableau $T$. 

For green diagonals, we have the following. For $i \in \{1, \ldots, 7\}$, $[i,i+2]_G$ corresponds to the one-column tableau with entries $\{a,b,c\}$, where $a$ is the common element in both $[i+2,i]_R$ and $[i,i+2]_B$, $b$ is the successor of $a$ in $s(T_{[i+2,i]_R})$, and $c$ is the predecessor of $a$ in $s(T_{[i,i+2]_B})$. For example, $[2,4]_G = \left(\begin{array}{c} 1\\ 4\\ 7 \end{array}\right)$.  For $i \in [7]$, exactly one of $[i+3,i]_R$ and $[i,i+3]_B$ is a two column tableaux. Denote the tableau by $\left(\begin{array}{cc} i_1 & j_1\\ i_2 & j_2\\ i_3& j_3 \end{array}\right)$. Then $[i,i+3]_G =  \left(\begin{array}{cc} i_1 & i_2\\ j_1 & i_3\\ j_2& j_3 \end{array}\right)$.

A set of tableaux are called compatible if they are in the same cluster. The geometric model of cluster categories can be used to obtain results about the compatibility of tableaux. For example, in the first graph in Figure \ref{fig:examples of clusters E6}, the diagonals $[1,6]_R$, $[1,5]_R$, $[7,5]_R$, $[5,1]_B$, $[1,4]_G$, $[2,4]_G$ form a cluster. Therefore the corresponding tableaux
\begin{align*}
\left(\begin{array}{c} 3\\ 4\\ 6 \end{array}\right), \left(\begin{array}{c} 2\\ 4\\ 6 \end{array}\right),
\left(\begin{array}{c} 1\\ 5\\ 6 \end{array}\right),
\left(\begin{array}{cc} 1&2\\ 3&5\\ 6&7 \end{array}\right),
\left(\begin{array}{cc} 1&2\\ 3&4\\ 6&7 \end{array}\right),
\left(\begin{array}{c} 1\\ 4\\ 7 \end{array}\right),
\end{align*}
are compatible.

\subsection{Tableaux and colored diagonals in $\mathcal{C}_{E_7}$}

In order to have a correspondence between oriented diagonals and tableaux in the case of $E_7$, we froze $P_{167}$ in the initial quiver in Figure \ref{fig:initial cluster for E7}. Note that the Pl\"{u}cker coordinate $P_{i_1, \ldots, i_k}$ corresponds to the one column tableau with entries $i_1, \ldots, i_k$. 

\begin{figure}
\resizebox{0.25\textwidth}{!}{%
\begin{tikzpicture}[scale=0.5]
     \node at (0,0) (v00) {\fbox{$P_{123}$}};
     \node at (0,-4) (v10) {$P_{124}$};
     \node at (0,-8) (v20) {$P_{125}$};
     \node at (0,-12) (v30) {$P_{126}$};
     \node at (0,-16) (v40) {$P_{127}$};
     \node at (0,-20) (v50) {\fbox{$P_{128}$}};

     \node at (4,-4) (v11) {$P_{134}$};
     \node at (4,-8) (v21) {$P_{145}$};
     \node at (4,-12) (v31) {$P_{156}$};
     \node at (4,-16) (v41) {\fbox{$P_{167}$}};
     \node at (4,-20) (v51) {\fbox{$P_{178}$}};
     
     \node at (8,-4) (v12) {\fbox{$P_{234}$}};
     \node at (8,-8) (v22) {\fbox{$P_{345}$}};
     \node at (8,-12) (v32) {\fbox{$P_{456}$}};
     \node at (8,-16) (v42) {\fbox{$P_{567}$}};
     \node at (8,-20) (v52) {\fbox{$P_{678}$}};
     
     \draw[->] (v10)--(v00);
     \draw[->] (v20)--(v10);
     \draw[->] (v30)--(v20);
     \draw[->] (v40)--(v30);
     \draw[->] (v50)--(v40);

     \draw[->] (v21)--(v11);
     \draw[->] (v31)--(v21);
     \draw[->] (v41)--(v31);
     \draw[->] (v51)--(v41);
     
     \draw[->] (v22)--(v12);
     \draw[->] (v32)--(v22);
     \draw[->] (v42)--(v32);
     \draw[->] (v52)--(v42);

     \draw[->] (v11)--(v10);
     \draw[->] (v12)--(v11);

     \draw[->] (v21)--(v20);
     \draw[->] (v22)--(v21);

     \draw[->] (v31)--(v30);
     \draw[->] (v32)--(v31);

     \draw[->] (v41)--(v40);
     \draw[->] (v42)--(v41);
    
     \draw[->] (v10)--(v21);
     \draw[->] (v21)--(v32);

     \draw[->] (v20)--(v31);
     \draw[->] (v31)--(v42);

     \draw[->] (v30)--(v41);
     \draw[->] (v41)--(v52);
     
     \draw[->] (v40)--(v51);
     
     \draw[->] (v11)--(v22);
\end{tikzpicture} 
}
            \caption{The initial cluster for $\mathcal{C}_{E_7}$.}
            \label{fig:initial cluster for E7}
\end{figure}
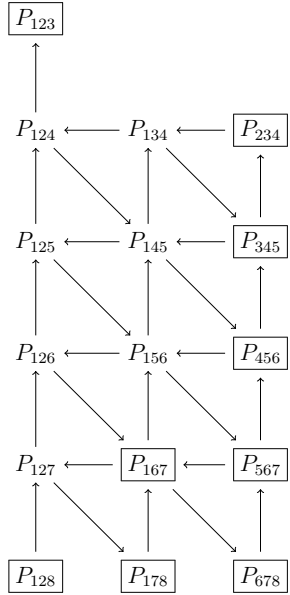

There are $33$ rank $1$ cluster variables, $29$ rank $2$ cluster variables, and $8$ rank $3$ cluster variables in $\mathcal{C}_{E_7}$. They can be obtained from the initial quiver in Figure \ref{fig:initial cluster for E7} by mutations.

We identify the cluster $([10,6]_R, [1,6]_R, [1,5]_R, [2,5]_G,  [5,1]_B, [2,4]_G, [6,1]_B)$ with the cluster
\begin{align*}
{\bf x} = \left( \left(\begin{array}{c} 1\\ 5\\ 7 \end{array}\right), \left(\begin{array}{cc} 1&2\\ 3&5\\ 7&8 \end{array}\right), \left(\begin{array}{ccc} 1&2&3\\ 3&4&5\\ 6&7&8 \end{array}\right), \left(\begin{array}{ccc} 1&2&3\\ 4&4&5\\ 6&7&8 \end{array}\right), \left(\begin{array}{cc} 1&2\\ 4&5\\ 6&8 \end{array}\right), \left(\begin{array}{c} 2\\ 4\\ 7 \end{array}\right), \left(\begin{array}{c} 2\\ 5\\ 6 \end{array}\right) \right).
\end{align*}

The correspondence between tableaux and diagonals 
can be deduced by combining the Auslander--Reiten
quiver of $\mathcal{C}_{E_7}$
with the stable translations graph given in Figure \ref{fig: AR quiver E7 tableau part 1} and Figure \ref{fig: AR quiver E7 tableau part 2}. The first two slices from the
Auslander--Reiten quiver of $\mathcal{C}_{E_7}$
are illustrated on the left in Figure \ \ref{fig: AR quiver E7}.

\subsection{Tableaux and colored diagonals in $\mathcal{C}_{E_8}$}

There are $48$ rank $1$ cluster variables, $56$ rank $2$ cluster variables, and $24$ rank $3$ cluster variables in $\mathcal{C}_{E_8}$. 

In \cite{Sco}, Scott verified that in type $E_8$, the cluster 
\[
\scalemath{0.9}{ {\bf x} = \left(\left(\begin{array}{c} 4\\ 5\\ 7 \end{array}\right), \left(\begin{array}{c} 3\\ 5\\ 7 \end{array}\right), \left(\begin{array}{cc} 1&3\\ 5&6\\ 7&8 \end{array}\right), \left(\begin{array}{ccc} 1&1&3\\ 2&5&6\\ 4&7&8 \end{array}\right), \left(\begin{array}{ccc} 1&2&3\\ 2&5&6\\ 4&7&8 \end{array}\right), \left(\begin{array}{cc} 2&3\\ 4&6\\ 7&8 \end{array}\right), \left(\begin{array}{c} 2\\ 5\\ 8 \end{array}\right),  \left(\begin{array}{c} 3\\ 4\\ 8 \end{array}\right) \right) }
\]
correspond to the negative simple roots $(-\alpha_8, - \alpha_7, -\alpha_6, -\alpha_5, -\alpha_4, -\alpha_3, -\alpha_2, -\alpha_1)$. We identify the cluster $([16,7]_R, [16,6]_R, [1,6]_R, [1,5]_R, [2,5]_G,  [5,1]_B, [2,4]_G, [6,1]_B)$ with ${\bf x}$. 

The correspondence between tableaux and diagonals can be deduced by combining the Auslander--Reiten
quiver of $\mathcal{C}_{E_8}$, with initial slices given as in Figure \ref{fig: AR quiver E8 part 1},
with the stable translations graph given in Figure \ref{fig: AR quiver E8 part 1 tableaux}, Figure \ref{fig: AR quiver E8 part 2 tableaux} and Figure \ref{fig: AR quiver E8 part 3 tableaux}.

\begin{proposition}
In the Auslander--Reiten quivers of $\mathcal{C}_{E_6}$, $\mathcal{C}_{E_7}$, $\mathcal{C}_{E_8}$, for
every mesh of the form:
\[
\xymatrix@-6mm@C-0.2cm{
& & T_1 \ar[rdd]\\
 \\
&  S_1  \ar[r]     \ar[rdd]\ar[ruu]&  \ar[r] \vdots   & S_2  \\
 \\
& & T_k \ar[ruu]  \\
} 
\]
where $k \in \{2,3\}$, we have that $S_1 \cup S_2 = T_1 \cup \cdots \cup T_k$ up to permutation of entries in $S_1 \cup S_2$. 
\end{proposition}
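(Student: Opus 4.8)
The plan is to read the displayed diagram as the Auslander--Reiten mesh at the vertex $S_2$, so that $S_1=\tau S_2$ and $T_1,\dots,T_k$ is the middle term of the almost split triangle $S_1\to\bigoplus_i T_i\to S_2\to\Sigma S_1$ in $\mathcal{C}_{E_u}$, $u\in\{6,7,8\}$. Under the dictionary of Section \ref{sec:tableaux and colored oriented diagonals} each vertex $X$ of the Auslander--Reiten quiver carries a tableau $T_X\in\SSYT(3,[m])$ (with $m=7$ for $E_6$ and $m=8$ for $E_7,E_8$), and the asserted identity $S_1\cup S_2=T_1\cup\dots\cup T_k$ is a statement about the \emph{row-wise} multisets of these tableaux. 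Accordingly I would record, for a tableau $T$ with columns $C_1,\dots,C_\ell$, the three row contents $\mathrm{wt}_i(T)=\sum_{a=1}^{\ell}\mathbf e_{c_{i,a}}$, where $c_{i,a}$ is the $i$-th smallest entry of $C_a$ and $\mathbf e_c\in\ZZ^m$ is the corresponding coordinate vector; the claim is precisely that each of $\mathrm{wt}_1,\mathrm{wt}_2,\mathrm{wt}_3$ is an additive function on the (finite) stable translation quiver, i.e.\ $\mathrm{wt}_i(S_1)+\mathrm{wt}_i(S_2)=\sum_j\mathrm{wt}_i(T_j)$.

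The conceptual core is the total content $\mathrm{wt}=\mathrm{wt}_1+\mathrm{wt}_2+\mathrm{wt}_3$, which is nothing but the weight of $\ch(T)$ for the torus grading on $\CC[\Gr(3,m)]$ in which a Pl\"ucker coordinate $P_{j_1j_2j_3}$ has weight $\mathbf e_{j_1}+\mathbf e_{j_2}+\mathbf e_{j_3}$. Here I would use that the almost split triangle has $\dim\mathrm{Ext}^1_{\mathcal{C}_{E_u}}(S_2,S_1)=1$, so that the cluster multiplication formula produces an exchange relation
\[
\ch(S_1)\,\ch(S_2)=\prod_i\ch(T_i)+\ch(B')
\]
up to frozen factors, where $B'$ is the middle of the companion triangle $S_2\to B'\to S_1$. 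Both sides are homogeneous for the torus grading, so comparing weights yields $\mathrm{wt}(S_1)+\mathrm{wt}(S_2)=\sum_i\mathrm{wt}(T_i)$; this already forces the total multisets of entries to agree, and explains the identity at the coarse level.

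Upgrading from the total content to the row-by-row statement is the step I expect to be the main obstacle, because the torus grading only remembers each column as an unordered set and is blind to the sorted (row) positions --- no linear torus action can separate the rows. To cross this gap I would combine two inputs. First, $\mathrm{Top}$ is multiplicative, so $\mathrm{Top}(\ch(S_1)\ch(S_2))=S_1\cup S_2$ while $\mathrm{Top}(\prod_i\ch(T_i))=\bigcup_i T_i$; applied to the exchange relation this gives $S_1\cup S_2=\max\{\bigcup_i T_i,\ \bigcup_j B'_j\}$, and it remains to show that the Auslander--Reiten triangle middle $\bigcup_i T_i$ is the dominant term for the order \eqref{eq: order for tableaux}. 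Second, I would pin this down geometrically: by Theorem \ref{thm:geom_equiv} the mesh is a minimal clockwise rotation about a common vertex of the polygon, and the reading rules together with the explicit tables (for $E_6$) and the stable--translation figures (for $E_7,E_8$) let one track the effect of this local rotation on each row. Since the Auslander--Reiten quivers are finite --- $42$, $70$ and $128$ vertices for $E_6$, $E_7$, $E_8$ --- this reduces to a bounded verification that each row content is already matched by $\bigcup_i T_i$, completing the proof.
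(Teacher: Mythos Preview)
The paper's proof is one sentence: it says the identity can be read off from the explicit tableau labellings of the Auslander--Reiten quivers in Figures~\ref{fig: AR quiver E6 tableau}--\ref{fig: AR quiver E8 part 3 tableaux}. No conceptual argument is offered; the proposition is simply verified mesh by mesh.

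Your route is more ambitious but does not avoid that check. Two concrete points.

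First, the torus-weight step has a gap. You write the exchange relation ``up to frozen factors'' and then immediately conclude $\mathrm{wt}(S_1)+\mathrm{wt}(S_2)=\sum_i\mathrm{wt}(T_i)$. But the frozen monomial multiplying $\prod_i\ch(T_i)$ in the genuine exchange relation on $\CC[\Gr(3,m)]$ contributes nontrivially to the torus weight, so homogeneity only yields $\mathrm{wt}(S_1)+\mathrm{wt}(S_2)=\sum_i\mathrm{wt}(T_i)+\mathrm{wt}(F_+)$. That $F_+$ is trivial for every almost-split mesh in these three categories is itself a statement about the specific labellings, not a consequence of the general framework; establishing it is essentially the total-content half of the proposition.

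Second, your upgrade via $\mathrm{Top}$ leaves exactly the dominance question open, and you then propose to settle it by the ``bounded verification'' over the $42$, $70$, $128$ indecomposables. That bounded verification \emph{is} the paper's proof. The exchange-relation and $\mathrm{Top}$ discussion does not shorten it: once you are checking each mesh anyway, the row multisets can be compared directly without passing through cluster multiplication.

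So the proposal is not wrong, but the conceptual layer is decorative here; after it one is still left with the direct inspection that the paper simply performs.
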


\begin{proof}
The proposition follows from the explicit descriptions of the 
Auslander--Reiten quivers of $\mathcal{C}_{E_6}$, $\mathcal{C}_{E_7}$, $\mathcal{C}_{E_8}$.
\end{proof}

\begin{figure}[h]
$$\scalemath{0.5}{
\xymatrix@-6mm@C-0.2cm{
& & \text{$\left(\begin{array}{c} 3\\ 4\\ 6 \end{array}\right)$}   \ar[rdd] & &    \text{$\left(\begin{array}{c} 1\\ 5\\ 7 \end{array}\right)$}    \ar[rdd] & &   \text{$\left(\begin{array}{c} 2\\ 6\\ 7 \end{array}\right)$}   \ar[rdd] &&  \text{$\left(\begin{array}{c} 1\\ 3\\ 4 \end{array}\right)$}     \ar[rdd]&&  \text{$\left(\begin{array}{c} 2\\ 3\\ 5 \end{array}\right)$}   \ar[rdd] &&  \text{$\left(\begin{array}{c} 4\\ 6\\ 7 \end{array}\right)$}   \ar[rdd] &&  \text{$\left(\begin{array}{c} 1\\ 5\\ 6 \end{array}\right)$}   \ar[rdd] &&  \text{$\left(\begin{array}{c} 2\\ 3\\ 7 \end{array}\right)$}   \\
\\
&  \text{$\left(\begin{array}{c} 2\\ 4\\ 6 \end{array}\right)$}   \ar[rdd] \ar[ruu] & &  \text{$\left(\begin{array}{cc} 1&3\\ 4&5\\ 6&7 \end{array}\right)$} \ar[rdd]\ar[ruu] & &    \text{$\left(\begin{array}{c} 2\\ 5\\ 7 \end{array}\right)$} \ar[rdd] \ar[ruu] & &   \text{$\left(\begin{array}{cc} 1&3\\ 2&6\\ 4&7 \end{array}\right)$}   \ar[ruu]\ar[rdd]&&  \text{$\left(\begin{array}{c} 1\\ 3\\ 5\end{array}\right)$} \ar[ruu]\ar[rdd]&&   \text{$\left(\begin{array}{cc} 2&4\\ 3&6\\ 5&7 \end{array}\right)$} \ar[rdd] \ar[ruu]  &&  \text{$\left(\begin{array}{c} 1\\ 4\\ 6\end{array}\right)$} \ar[ruu]\ar[rdd] &&  \text{$\left(\begin{array}{cc} 1&2\\ 3&5\\ 6&7\end{array}\right)$} \ar[ruu]\ar[rdd]   \\
 \\
&  \ar[r]  \text{$\left(\begin{array}{c} 1\\ 4\\ 7 \end{array}\right)$}    & \ar[r]  \text{$\left(\begin{array}{cc} 1&2\\ 4&5\\ 6&7 \end{array}\right)$} \ar[ruu]\ar[rdd] &  \ar[r]   \text{$\left(\begin{array}{c} 2\\ 5\\ 6 \end{array}\right)$}   &\ar[r] \ar[ruu] \ar[rdd] \text{$\left(\begin{array}{cc} 2&3\\ 4&5\\ 6&7 \end{array}\right)$} &  \ar[r]   \text{$\left(\begin{array}{c} 3\\ 4\\ 7 \end{array}\right)$}  &\ar[r]    \text{$\left(\begin{array}{cc} 1&3\\ 2&5\\ 4&7 \end{array}\right)$}  \ar[ruu] \ar[rdd]&  \ar[r]  \text{$\left(\begin{array}{c} 1\\ 2\\ 5 \end{array}\right)$}   & \ar[r]  \text{$\left(\begin{array}{cc} 1&3\\ 2&6\\ 5&7 \end{array}\right)$}   \ar[rdd]\ar[ruu]&  \ar[r]  \text{$\left(\begin{array}{c} 3\\ 6\\ 7 \end{array}\right)$} & \ar[r]   \text{$\left(\begin{array}{cc} 1&4\\ 3&6\\ 5&7 \end{array}\right)$}  \ar[rdd]\ar[ruu]& \text{$\left(\begin{array}{c} 1\\ 4\\ 5 \end{array}\right)$} \ar[r] & \ar[r]   \text{$\left(\begin{array}{cc} 1&2\\ 3&4\\ 5&6 \end{array}\right)$}  \ar[rdd]\ar[ruu]& \text{$\left(\begin{array}{c} 2\\ 3\\ 6 \end{array}\right)$} \ar[r] & \ar[r]   \text{$\left(\begin{array}{cc} 1&2\\ 3&4\\ 6&7 \end{array}\right)$}  \ar[rdd]\ar[ruu]& \text{$\left(\begin{array}{c} 1\\ 4\\ 7 \end{array}\right)$} \ar[r] &  \text{$\left(\begin{array}{cc} 1&2\\ 4&5\\ 6&7 \end{array}\right)$}  \\
 \\
& \text{$\left(\begin{array}{cc} 1&2\\ 3&5\\ 6&7 \end{array}\right)$}   \ar[ruu] \ar[rdd]&&   \text{$\left(\begin{array}{c} 2\\ 4\\ 7 \end{array}\right)$}  \ar[ruu] \ar[rdd]&&  \text{$\left(\begin{array}{cc} 1&3\\ 2&5\\ 4&6 \end{array}\right)$}    \ar[ruu] \ar[rdd]&&   \text{$\left(\begin{array}{c} 3\\ 5\\ 7 \end{array}\right)$}  \ar[ruu] \ar[rdd]&&   \text{$\left(\begin{array}{cc} 1&4\\ 2&6\\ 5&7 \end{array}\right)$} \ar[ruu] \ar[rdd] && \text{$\left(\begin{array}{c} 1\\ 3\\ 6 \end{array}\right)$} \ar[ruu] \ar[rdd] &&   \text{$\left(\begin{array}{cc} 1&2\\ 3&4\\ 5&7 \end{array}\right)$} \ar[ruu] \ar[rdd] && \text{$\left(\begin{array}{c} 2\\ 4\\ 6 \end{array}\right)$} \ar[ruu] \ar[rdd]  \\
 \\
&&  \text{$\left(\begin{array}{c} 2\\ 3\\ 7 \end{array}\right)$} \ar[ruu]&&  \text{$\left(\begin{array}{c} 1\\ 2\\ 4 \end{array}\right)$}   \ar[ruu]&&   \text{$\left(\begin{array}{c} 3\\ 5\\ 6 \end{array}\right)$}  \ar[ruu]&&  \text{$\left(\begin{array}{c} 4\\ 5\\ 7 \end{array}\right)$}   \ar[ruu]&&  \text{$\left(\begin{array}{c} 1\\ 2\\ 6 \end{array}\right)$} \ar[ruu] &&  \text{$\left(\begin{array}{c} 1\\ 3\\ 7 \end{array}\right)$}   \ar[ruu]&&  \text{$\left(\begin{array}{c} 2\\ 4\\ 5 \end{array}\right)$} \ar[ruu] &&  \text{$\left(\begin{array}{c} 3\\ 4\\ 6 \end{array}\right)$}  \\
} }
$$
\caption{Auslander--Reiten quiver of $\mathcal{C}_{E_6}$ with vertices labelled by tableaux.}
\label{fig: AR quiver E6 tableau}
\end{figure}
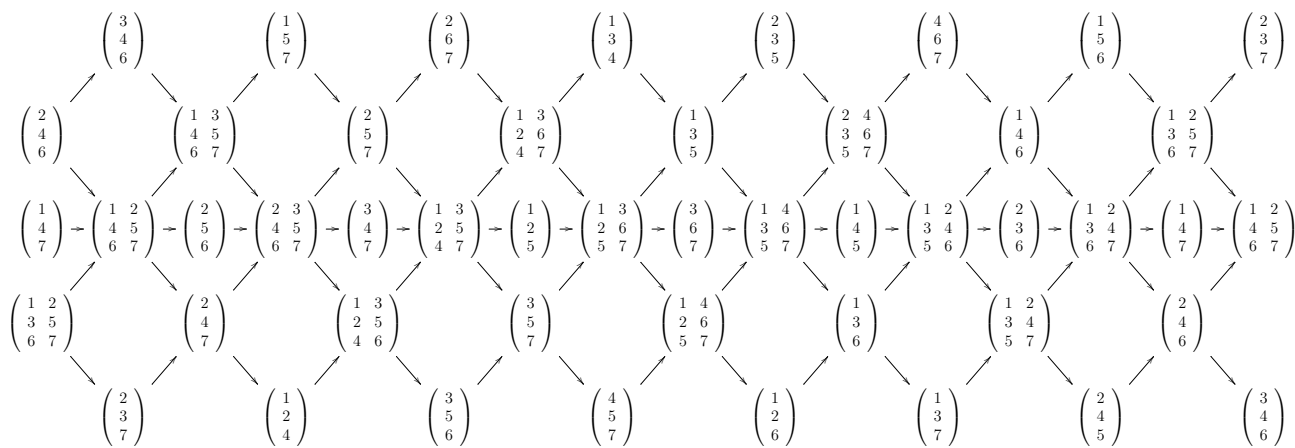

\begin{figure}
$$\scalemath{0.6}{
\xymatrix@-6mm@C-0.2cm{
& \text{$\left(\begin{array}{c} 1\\ 5\\ 7 \end{array}\right)$} \ar[rdd] & & \text{$\left(\begin{array}{cc} 1&2\\ 3&6\\ 7&8 \end{array}\right)$} \ar[rdd] & &    \text{$\left(\begin{array}{c} 1\\ 2\\ 4 \end{array}\right)$}   \ar[rdd] & &   \text{$\left(\begin{array}{c} 3\\ 5\\ 6 \end{array}\right)$}  \ar[rdd] &&  \text{$\left(\begin{array}{c} 4\\ 5\\ 7 \end{array}\right)$}   \ar[rdd]&&  \text{$\left(\begin{array}{c} 1\\ 6\\ 8 \end{array}\right)$} \\
\\
& & \text{$\left(\begin{array}{cc} 1&2\\ 3&5\\ 7&8 \end{array}\right)$}  \ar[ruu] \ar[rdd] & &    \text{$\left(\begin{array}{cc} 1&2\\ 4&6\\ 7&8 \end{array}\right)$}  \ar[ruu]   \ar[rdd] & &   \text{$\left(\begin{array}{cc} 1&3\\ 2&5\\ 4&6 \end{array}\right)$}  \ar[ruu]  \ar[rdd] &&  \text{$\left(\begin{array}{c} 3\\ 5\\ 7 \end{array}\right)$}   \ar[ruu]  \ar[rdd]&&  \text{$\left(\begin{array}{cc} 1&4\\ 5&6\\ 7&8 \end{array}\right)$}  \ar[ruu]   \ar[rdd] \\
\\
&  \text{$\left(\begin{array}{ccc} 1&2&3\\ 3&4&5\\ 6&7&8 \end{array}\right)$}   \ar[rdd] \ar[ruu] & &  \text{$\left(\begin{array}{cc} 1&2\\ 4&5\\ 7&8 \end{array}\right)$} \ar[rdd]\ar[ruu] & &    \text{$\left(\begin{array}{ccc} 1&2&3\\ 4&5&6\\ 6&7&8 \end{array}\right)$} \ar[rdd] \ar[ruu] & &   \text{$\left(\begin{array}{cc} 1&3\\ 2&5\\ 4&7 \end{array}\right)$}   \ar[ruu]\ar[rdd]&&  \text{$\left(\begin{array}{cc} 1&3\\ 5&6\\ 7&8 \end{array}\right)$} \ar[ruu]\ar[rdd]&&   \text{$\left(\begin{array}{cc} 1&4\\ 2&6\\ 5&7 \end{array}\right)$}    \\
 \\
&  \ar[r]  \text{$\left(\begin{array}{c} 2\\ 4\\ 7 \end{array}\right)$}    & \ar[r]  \text{$\left(\begin{array}{ccc} 1&2&3\\ 4&4&5\\ 6&7&8 \end{array}\right)$} \ar[ruu]\ar[rdd] &  \ar[r]   \text{$\left(\begin{array}{cc} 1&3\\ 4&5\\ 6&8 \end{array}\right)$}   &\ar[r] \ar[ruu] \ar[rdd] \text{$\left(\begin{array}{ccc} 1&2&3\\ 4&5&5\\ 6&7&8 \end{array}\right)$} &  \ar[r]   \text{$\left(\begin{array}{c} 2\\ 5\\ 7 \end{array}\right)$}  &\ar[r]    \text{$\left(\begin{array}{ccc} 1&2&3\\ 4&5&6\\ 7&7&8 \end{array}\right)$}  \ar[ruu] \ar[rdd]&  \ar[r]  \text{$\left(\begin{array}{cc} 1&3\\ 4&6\\ 7&8 \end{array}\right)$}   & \ar[r]  \text{$\left(\begin{array}{ccc} 1&1&3\\ 2&5&6\\ 4&7&8 \end{array}\right)$}   \ar[rdd]\ar[ruu]&  \ar[r]  \text{$\left(\begin{array}{c} 1\\ 2\\ 5 \end{array}\right)$} & \ar[r]   \text{$\left(\begin{array}{cc} 1&3\\ 2&6\\ 5&7 \end{array}\right)$}  \ar[rdd]\ar[ruu]& \text{$\left(\begin{array}{c} 3\\ 6\\ 7 \end{array}\right)$}  \\
 \\
& \text{$\left(\begin{array}{cc} 1&2\\ 4&5\\ 6&8 \end{array}\right)$}   \ar[ruu] \ar[rdd]&&   \text{$\left(\begin{array}{cc} 2&3\\ 4&5\\ 6&7 \end{array}\right)$}  \ar[ruu] \ar[rdd]&&  \text{$\left(\begin{array}{cc} 1&3\\ 4&5\\ 7&8 \end{array}\right)$}    \ar[ruu] \ar[rdd]&&   \text{$\left(\begin{array}{cc} 1&2\\ 5&6\\ 7&8 \end{array}\right)$}  \ar[ruu] \ar[rdd]&&   \text{$\left(\begin{array}{cc} 1&3\\ 2&6\\ 4&7 \end{array}\right)$} \ar[ruu] \ar[rdd] && \text{$\left(\begin{array}{c} 1\\ 3\\ 5 \end{array}\right)$}  \\
 \\
&&  \text{$\left(\begin{array}{c} 2\\ 5\\ 6 \end{array}\right)$} \ar[ruu]&&  \text{$\left(\begin{array}{c} 3\\ 4\\ 7 \end{array}\right)$}   \ar[ruu]&&   \text{$\left(\begin{array}{c} 1\\ 5\\ 8 \end{array}\right)$}  \ar[ruu]&&  \text{$\left(\begin{array}{c} 2\\ 6\\ 7 \end{array}\right)$}   \ar[ruu]&&  \text{$\left(\begin{array}{c} 1\\ 3\\ 4 \end{array}\right)$} \ar[ruu] \\
} }
$$
\caption{Auslander--Reiten quiver of $\mathcal{C}_{E_7}$ with vertices labelled by tableaux, Part 1.}
\label{fig: AR quiver E7 tableau part 1}
\end{figure}

\begin{figure}
$$\scalemath{0.6}{
\xymatrix@-6mm@C-0.2cm{
& \text{$\left(\begin{array}{c} 1\\ 6\\ 8 \end{array}\right)$} \ar[rdd] & & \text{$\left(\begin{array}{c} 1\\ 2\\ 7 \end{array}\right)$} \ar[rdd] & &    \text{$\left(\begin{array}{c} 1\\ 3\\ 8 \end{array}\right)$}   \ar[rdd] & &   \text{$\left(\begin{array}{c} 2\\ 4\\ 5 \end{array}\right)$}  \ar[rdd] &&  \text{$\left(\begin{array}{c} 3\\ 4\\ 6 \end{array}\right)$}   \ar[rdd]&&  \text{$\left(\begin{array}{c} 1\\ 5\\ 7 \end{array}\right)$} \\
\\
& & \text{$\left(\begin{array}{c} 1\\ 2\\ 6 \end{array}\right)$} \ar[ruu] \ar[rdd] & &    \text{$\left(\begin{array}{c} 1\\ 3\\ 7 \end{array}\right)$}  \ar[ruu]   \ar[rdd] & &  \text{$\left(\begin{array}{cc} 1&2\\ 3&4\\ 5&8 \end{array}\right)$}  \ar[ruu]  \ar[rdd] &&  \text{$\left(\begin{array}{c} 2\\ 4\\ 6 \end{array}\right)$}   \ar[ruu]  \ar[rdd]&&  \text{$\left(\begin{array}{cc} 1&3\\ 4&5\\ 6&7 \end{array}\right)$}  \ar[ruu]   \ar[rdd] \\
\\
&  \text{$\left(\begin{array}{cc} 1&4\\ 2&6\\ 5&7 \end{array}\right)$}   \ar[rdd] \ar[ruu] & &  \text{$\left(\begin{array}{c} 1\\ 3\\ 6 \end{array}\right)$} \ar[rdd]\ar[ruu] & &    \text{$\left(\begin{array}{cc} 1&2\\ 3&4\\ 5&7 \end{array}\right)$} \ar[rdd] \ar[ruu] & &   \text{$\left(\begin{array}{cc} 1&2\\ 3&4\\ 6&8 \end{array}\right)$}   \ar[ruu]\ar[rdd]&&  \text{$\left(\begin{array}{cc} 1&2\\ 4&5\\ 6&7 \end{array}\right)$} \ar[ruu]\ar[rdd]&&   \text{$\left(\begin{array}{ccc} 1&2&3\\ 3&4&5\\ 6&7&8 \end{array}\right)$}   \\
 \\
&  \ar[r] \text{$\left(\begin{array}{c} 3\\ 6\\ 7 \end{array}\right)$}    & \ar[r]  \text{$\left(\begin{array}{cc} 1&4\\ 3&6\\ 5&7 \end{array}\right)$} \ar[ruu]\ar[rdd] &  \ar[r]  \text{$\left(\begin{array}{c} 1\\ 4\\ 5 \end{array}\right)$}    &\ar[r] \ar[ruu] \ar[rdd] \text{$\left(\begin{array}{cc} 1&2\\ 3&4\\ 5&6 \end{array}\right)$} &  \ar[r]   \text{$\left(\begin{array}{c} 2\\ 3\\ 6 \end{array}\right)$}   &\ar[r]    \text{$\left(\begin{array}{cc} 1&2\\ 3&4\\ 6&7 \end{array}\right)$}   \ar[ruu] \ar[rdd]&  \ar[r]  \text{$\left(\begin{array}{c} 1\\ 4\\ 7 \end{array}\right)$}    & \ar[r]  \text{$\left(\begin{array}{ccc} 1&1&2\\ 3&4&5\\ 6&7&8 \end{array}\right)$}  \ar[rdd]\ar[ruu]&  \ar[r]  \text{$\left(\begin{array}{cc} 1&2\\ 3&5\\ 6&8 \end{array}\right)$} & \ar[r]   \text{$\left(\begin{array}{ccc} 1&2&2\\ 3&4&5\\ 6&7&8 \end{array}\right)$}  \ar[rdd]\ar[ruu]&  \text{$\left(\begin{array}{c} 2\\ 4\\ 7 \end{array}\right)$} \\
 \\
& \text{$\left(\begin{array}{c} 1\\ 3\\ 5 \end{array}\right)$}    \ar[ruu] \ar[rdd]&&  \text{$\left(\begin{array}{cc} 2&4\\ 3&6\\ 5&7 \end{array}\right)$}  \ar[ruu] \ar[rdd]&&  \text{$\left(\begin{array}{c} 1\\ 4\\ 6 \end{array}\right)$}    \ar[ruu] \ar[rdd]&&   \text{$\left(\begin{array}{cc} 1&2\\ 3&5\\ 6&7 \end{array}\right)$} \ar[ruu] \ar[rdd]&&   \text{$\left(\begin{array}{cc} 1&2\\ 3&4\\ 7&8 \end{array}\right)$} \ar[ruu] \ar[rdd] && \text{$\left(\begin{array}{cc} 1&2\\ 4&5\\ 6&8 \end{array}\right)$} \\
 \\
&&  \text{$\left(\begin{array}{c} 2\\ 3\\ 5 \end{array}\right)$} \ar[ruu]&&  \text{$\left(\begin{array}{c} 4\\ 6\\ 7 \end{array}\right)$}   \ar[ruu]&&   \text{$\left(\begin{array}{c} 1\\ 5\\ 6 \end{array}\right)$}  \ar[ruu]&&  \text{$\left(\begin{array}{c} 2\\ 3\\ 7 \end{array}\right)$}   \ar[ruu]&&  \text{$\left(\begin{array}{c} 1\\ 4\\ 8 \end{array}\right)$} \ar[ruu] \\
} }
$$
\caption{Auslander--Reiten quiver of $\mathcal{C}_{E_7}$ with vertices labelled by tableaux, Part 2.}
\label{fig: AR quiver E7 tableau part 2}
\end{figure}

\begin{figure}
$$\scalemath{0.48}{
\xymatrix@-6mm@C-0.2cm{
& & \text{$\left(\begin{array}{c} 4\\ 5\\ 7 \end{array}\right)$} \ar[rdd] & &    \text{$\left(\begin{array}{c} 1\\ 6\\ 8 \end{array}\right)$}    \ar[rdd] & &  \text{$\left(\begin{array}{c} 2\\ 7\\ 8 \end{array}\right)$} \ar[rdd] &&   \text{$\left(\begin{array}{c} 1\\ 3\\ 4 \end{array}\right)$}  \ar[rdd]&&  \text{$\left(\begin{array}{c} 2\\ 3\\ 5 \end{array}\right)$}   \ar[rdd]&& \text{$\left(\begin{array}{c} 4\\ 6\\ 7 \end{array}\right)$} \ar[rdd] \\
\\
& \text{$\left(\begin{array}{c} 3\\ 5\\ 7 \end{array}\right)$} \ar[ruu] \ar[rdd] & & \text{$\left(\begin{array}{cc} 1&4\\ 5&6\\ 7&8 \end{array}\right)$} \ar[ruu] \ar[rdd] & &    \text{$\left(\begin{array}{c} 2\\ 6\\ 8 \end{array}\right)$}   \ar[ruu]\ar[rdd] & &   \text{$\left(\begin{array}{cc} 1&3\\ 2&7\\ 4&8 \end{array}\right)$}  \ar[ruu] \ar[rdd] &&  \text{$\left(\begin{array}{c} 1\\ 3\\ 5 \end{array}\right)$} \ar[ruu]  \ar[rdd]&&  \text{$\left(\begin{array}{cc} 2&4\\ 3&6\\ 5&7 \end{array}\right)$} \ar[ruu]  \ar[rdd]&& \text{$\left(\begin{array}{c} 4\\ 6\\ 8 \end{array}\right)$}    \\
\\
& &  \text{$\left(\begin{array}{cc} 1&3\\ 5&6\\ 7&8 \end{array}\right)$}  \ar[ruu] \ar[rdd] &&   \text{$\left(\begin{array}{cc} 2&4\\ 5&6\\ 7&8 \end{array}\right)$}    \ar[ruu]   \ar[rdd] &&  \text{$\left(\begin{array}{cc} 1&3\\ 2&6\\ 4&8 \end{array}\right)$}   \ar[ruu]  \ar[rdd] &&   \text{$\left(\begin{array}{cc} 1&3\\ 2&7\\ 5&8 \end{array}\right)$}   \ar[ruu]  \ar[rdd]&&  \text{$\left(\begin{array}{cc} 1&4\\ 3&6\\ 5&7 \end{array}\right)$}  \ar[ruu]   \ar[rdd]&& \text{$\left(\begin{array}{cc} 2&4\\ 3&6\\ 5&8 \end{array}\right)$} \ar[ruu]  \ar[rdd] \\
\\
&  \text{$\left(\begin{array}{ccc} 1&1&3\\ 2&5&6\\ 4&7&8 \end{array}\right)$}   \ar[rdd] \ar[ruu] & &  \text{$\left(\begin{array}{cc} 2&3\\ 5&6\\ 7&8 \end{array}\right)$} \ar[rdd]\ar[ruu] & &    \text{$\left(\begin{array}{ccc} 1&3&4\\ 2&5&6\\ 4&7&8 \end{array}\right)$} \ar[rdd] \ar[ruu] & &   \text{$\left(\begin{array}{cc} 1&3\\ 2&6\\ 5&8 \end{array}\right)$}   \ar[ruu]\ar[rdd]&&  \text{$\left(\begin{array}{ccc} 1&3&4\\ 2&6&7\\ 5&7&8 \end{array}\right)$} \ar[ruu]\ar[rdd]&&  \text{$\left(\begin{array}{cc} 1&4\\ 3&6\\ 5&8 \end{array}\right)$} \ar[ruu]\ar[rdd]&&  \text{$\left(\begin{array}{ccc} 1&2&4\\ 2&3&7\\ 5&6&8 \end{array}\right)$}  \\
 \\
& \ar[r]   \text{$\left(\begin{array}{c} 2\\ 5\\ 8 \end{array}\right)$}  &  \ar[ruu]\ar[rdd] \ar[r]  \text{$\left(\begin{array}{ccc} 1&2&3\\ 2&5&6\\ 4&7&8 \end{array}\right)$}    &\ar[r]  \text{$\left(\begin{array}{cc} 1&3\\ 2&6\\ 4&7 \end{array}\right)$} &  \text{$\left(\begin{array}{ccc} 1&3&3\\ 2&5&6\\ 4&7&8 \end{array}\right)$} \ar[r]   \ar[ruu] \ar[rdd]   & \ar[r]  \text{$\left(\begin{array}{c} 3\\ 5\\ 8 \end{array}\right)$}     &  \text{$\left(\begin{array}{ccc} 1&3&4\\ 2&5&6\\ 5&7&8 \end{array}\right)$} \ar[r]  \ar[ruu] \ar[rdd]   & \ar[r]   \text{$\left(\begin{array}{cc} 1&4\\ 2&6\\ 5&7 \end{array}\right)$} & \text{$\left(\begin{array}{ccc} 1&3&4\\ 2&6&6\\ 5&7&8 \end{array}\right)$}  \ar[r]  \ar[rdd]\ar[ruu] & \ar[r]   \text{$\left(\begin{array}{c} 3\\ 6\\ 8 \end{array}\right)$}  &  \ar[r]  \ar[rdd]\ar[ruu] \text{$\left(\begin{array}{ccc} 1&3&4\\ 2&6&7\\ 5&8&8 \end{array}\right)$}   &\ar[r]   \text{$\left(\begin{array}{cc} 1&4\\ 2&7\\ 5&8 \end{array}\right)$} & \text{$\left(\begin{array}{ccc} 1&1&4\\ 2&3&7\\ 5&6&8 \end{array}\right)$} \ar[r]\ar[rdd]\ar[ruu]  & \text{$\left(\begin{array}{c} 1\\ 3\\ 6 \end{array}\right)$}      \\
 \\
& \text{$\left(\begin{array}{cc} 2&3\\ 4&6\\ 7&8 \end{array}\right)$}   \ar[ruu] \ar[rdd]&&   \text{$\left(\begin{array}{cc} 1&3\\ 2&5\\ 4&8 \end{array}\right)$}   \ar[ruu] \ar[rdd]&&  \text{$\left(\begin{array}{cc} 1&3\\ 2&6\\ 5&7 \end{array}\right)$}     \ar[ruu] \ar[rdd]&&   \text{$\left(\begin{array}{cc} 3&4\\ 5&6\\ 7&8 \end{array}\right)$}   \ar[ruu] \ar[rdd]&&   \text{$\left(\begin{array}{cc} 1&4\\ 2&6\\ 5&8 \end{array}\right)$}  \ar[ruu] \ar[rdd]&&  \text{$\left(\begin{array}{cc} 1&3\\ 2&7\\ 6&8 \end{array}\right)$}    \ar[ruu] \ar[rdd]&&   \text{$\left(\begin{array}{cc} 1&4\\ 3&7\\ 5&8 \end{array}\right)$}  \\
 \\
&&  \text{$\left(\begin{array}{c} 3\\ 4\\ 8 \end{array}\right)$} \ar[ruu]&&  \text{$\left(\begin{array}{c} 1\\ 2\\ 5 \end{array}\right)$}   \ar[ruu]&&  \text{$\left(\begin{array}{c} 3\\ 6\\ 7 \end{array}\right)$} \ar[ruu]&&  \text{$\left(\begin{array}{c} 4\\ 5\\ 8 \end{array}\right)$}  \ar[ruu]&&  \text{$\left(\begin{array}{c} 1\\ 2\\ 6 \end{array}\right)$}   \ar[ruu]&&  \text{$\left(\begin{array}{c} 3\\ 7\\ 8 \end{array}\right)$} \ar[ruu] \\
} }
$$
\caption{Auslander--Reiten quiver of $\mathcal{C}_{E_8}$ with vertices labelled by tableaux, Part 1.}
\label{fig: AR quiver E8 part 1 tableaux}
\end{figure}

\begin{figure}
$$\scalemath{0.55}{
\xymatrix@-6mm@C-0.2cm{
&&  \text{$\left(\begin{array}{c} 5\\ 6\\ 8 \end{array}\right)$} \ar[rdd] && \text{$\left(\begin{array}{c} 1\\ 2\\ 7 \end{array}\right)$} \ar[rdd]  && \text{$\left(\begin{array}{c} 1\\ 3\\ 8 \end{array}\right)$} \ar[rdd] & & \text{$\left(\begin{array}{c} 2\\ 4\\ 5 \end{array}\right)$}  \ar[rdd] & &   \text{$\left(\begin{array}{c} 3\\ 4\\ 6 \end{array}\right)$}  \ar[rdd]  \\
\\
& \text{$\left(\begin{array}{c} 4\\ 6\\ 8 \end{array}\right)$}  \ar[ruu]  \ar[rdd]&&  \text{$\left(\begin{array}{cc} 1&5\\ 2&7\\ 6&8 \end{array}\right)$} \ar[ruu] \ar[rdd] && \text{$\left(\begin{array}{c} 1\\ 3\\ 7 \end{array}\right)$} \ar[ruu] \ar[rdd] && \text{$\left(\begin{array}{cc} 1&2\\ 3&4\\ 5&8 \end{array}\right)$} \ar[ruu] \ar[rdd] & &    \text{$\left(\begin{array}{c} 2\\ 4\\ 6 \end{array}\right)$}   \ar[ruu]\ar[rdd] &&  \text{$\left(\begin{array}{cc} 3&5\\ 4&7\\ 6&8 \end{array}\right)$}  \\
\\
&&  \text{$\left(\begin{array}{cc} 1&4\\ 2&7\\ 6&8 \end{array}\right)$} \ar[ruu]  \ar[rdd] && \text{$\left(\begin{array}{cc} 1&5\\ 3&7\\ 6&8 \end{array}\right)$} \ar[ruu] \ar[rdd] & & \text{$\left(\begin{array}{cc} 1&2\\ 3&4\\ 5&7 \end{array}\right)$}  \ar[ruu] \ar[rdd] & &   \text{$\left(\begin{array}{cc} 1&2\\ 3&4\\ 6&8 \end{array}\right)$} \ar[ruu]   \ar[rdd] & &   \text{$\left(\begin{array}{cc} 2&5\\ 4&7\\ 6&8 \end{array}\right)$}  \ar[ruu]  \ar[rdd]  \\
\\
&  \text{$\left(\begin{array}{ccc} 1&2&4\\ 2&3&7\\ 5&6&8 \end{array}\right)$} \ar[ruu] \ar[rdd] && \text{$\left(\begin{array}{cc} 1&4\\ 3&7\\ 6&8 \end{array}\right)$} \ar[rdd] \ar[ruu] && \text{$\left(\begin{array}{ccc} 1&2&5\\ 3&4&7\\ 5&6&8 \end{array}\right)$} \ar[ruu] \ar[rdd] &&  \text{$\left(\begin{array}{cc} 1&2\\ 3&4\\ 6&7 \end{array}\right)$} \ar[rdd]\ar[ruu] & &    \text{$\left(\begin{array}{ccc} 1&2&5\\ 3&4&7\\ 6&8&8 \end{array}\right)$} \ar[rdd] \ar[ruu] &&  \text{$\left(\begin{array}{cc} 1&2\\ 4&5\\ 6&7 \end{array}\right)$} \\
 \\
& \text{$\left(\begin{array}{c} 1\\ 3\\ 6 \end{array}\right)$} \ar[r]   & \text{$\left(\begin{array}{ccc} 1&2&4\\ 3&3&7\\ 5&6&8 \end{array}\right)$} \ar[ruu] \ar[rdd] \ar[r] & \text{$\left(\begin{array}{cc} 2&4\\ 3&7\\ 5&8 \end{array}\right)$}  \ar[r] & \text{$\left(\begin{array}{ccc} 1&2&4\\ 3&4&7\\ 5&6&8 \end{array}\right)$}  \ar[r] \ar[ruu] \ar[rdd]  &  \text{$\left(\begin{array}{c} 1\\ 4\\ 6 \end{array}\right)$} \ar[r]  & \text{$\left(\begin{array}{ccc} 1&2&5\\ 3&4&7\\ 6&6&8 \end{array}\right)$}  \ar[r]    \ar[ruu]\ar[rdd] &  \ar[r]    \text{$\left(\begin{array}{cc} 2&5\\ 3&7\\ 6&8 \end{array}\right)$}  & \text{$\left(\begin{array}{ccc} 1&2&5\\ 3&4&7\\ 6&7&8 \end{array}\right)$}  \ar[r] \ar[ruu] \ar[rdd]  &  \ar[r]   \text{$\left(\begin{array}{c} 1\\ 4\\ 7 \end{array}\right)$}     & \text{$\left(\begin{array}{ccc} 1&1&2\\ 3&4&5\\ 6&7&8 \end{array}\right)$}  \ar[r]    \ar[ruu] \ar[rdd] & \text{$\left(\begin{array}{cc} 1&2\\ 3&5\\ 6&8 \end{array}\right)$}  \\
 \\
&   \text{$\left(\begin{array}{cc} 1&4\\ 3&7\\ 5&8 \end{array}\right)$}  \ar[rdd]  \ar[ruu] && \text{$\left(\begin{array}{cc} 1&2\\ 3&4\\ 5&6 \end{array}\right)$}  \ar[ruu] \ar[rdd] && \text{$\left(\begin{array}{cc} 2&4\\ 3&7\\ 6&8 \end{array}\right)$} \ar[ruu] \ar[rdd]  &&   \text{$\left(\begin{array}{cc} 1&5\\ 4&7\\ 6&8 \end{array}\right)$}   \ar[ruu] \ar[rdd]&&  \text{$\left(\begin{array}{cc} 1&2\\ 3&5\\ 6&7 \end{array}\right)$}    \ar[ruu] \ar[rdd]&&  \text{$\left(\begin{array}{cc} 1&2\\ 3&4\\ 7&8 \end{array}\right)$}  \\
 \\
&&  \text{$\left(\begin{array}{c} 1\\ 4\\ 5 \end{array}\right)$} \ar[ruu] && \text{$\left(\begin{array}{c} 2\\ 3\\ 6 \end{array}\right)$} \ar[ruu] &&  \text{$\left(\begin{array}{c} 4\\ 7\\ 8 \end{array}\right)$} \ar[ruu]&&  \text{$\left(\begin{array}{c} 1\\ 5\\ 6 \end{array}\right)$}   \ar[ruu]&&   \text{$\left(\begin{array}{c} 2\\ 3\\ 7 \end{array}\right)$}  \ar[ruu] \\
} }
$$
\caption{Auslander--Reiten quiver of $\mathcal{C}_{E_8}$ with vertices labelled by tableaux, Part 2.}
\label{fig: AR quiver E8 part 2 tableaux}
\end{figure}

\begin{figure}
$$\scalemath{0.55}{
\xymatrix@-6mm@C-0.2cm{
&&  \text{$\left(\begin{array}{c} 5\\ 7\\ 8 \end{array}\right)$}    \ar[rdd]&&  \text{$\left(\begin{array}{c} 1\\ 6\\ 7 \end{array}\right)$} \ar[rdd]&& \text{$\left(\begin{array}{c} 2\\ 3\\ 8 \end{array}\right)$}  \ar[rdd]&&  \text{$\left(\begin{array}{c} 1\\ 2\\ 4 \end{array}\right)$} \ar[rdd] && \text{$\left(\begin{array}{c} 3\\ 5\\ 6 \end{array}\right)$} \ar[rdd] &&  \\
\\
&   \text{$\left(\begin{array}{cc} 3&5\\ 4&7\\ 6&8 \end{array}\right)$}  \ar[ruu] \ar[rdd] &&  \text{$\left(\begin{array}{c} 1\\ 5\\ 7 \end{array}\right)$}  \ar[ruu]  \ar[rdd]&&  \text{$\left(\begin{array}{cc} 1&2\\ 3&6\\ 7&8 \end{array}\right)$}  \ar[ruu]  \ar[rdd]&& \text{$\left(\begin{array}{c} 2\\ 4\\ 8 \end{array}\right)$}  \ar[ruu]  \ar[rdd]&&  \text{$\left(\begin{array}{cc} 1&3\\ 2&5\\ 4&6 \end{array}\right)$} \ar[ruu] \ar[rdd] && \text{$\left(\begin{array}{c} 3\\ 5\\ 7 \end{array}\right)$}  \\
\\
&&  \text{$\left(\begin{array}{cc} 1&3\\ 4&5\\ 6&7 \end{array}\right)$}   \ar[ruu]  \ar[rdd]&&  \text{$\left(\begin{array}{cc} 1&2\\ 3&5\\ 7&8 \end{array}\right)$}  \ar[ruu]   \ar[rdd]&& \text{$\left(\begin{array}{cc} 1&2\\ 4&6\\ 7&8 \end{array}\right)$}  \ar[ruu]   \ar[rdd]&&  \text{$\left(\begin{array}{cc} 2&3\\ 4&5\\ 6&8 \end{array}\right)$} \ar[ruu]  \ar[rdd] && \text{$\left(\begin{array}{cc} 1&3\\ 2&5\\ 4&7 \end{array}\right)$} \ar[ruu] \ar[rdd]  \\
\\
&   \text{$\left(\begin{array}{cc} 1&2\\ 4&5\\ 6&7 \end{array}\right)$} \ar[ruu]\ar[rdd]&&  \text{$\left(\begin{array}{ccc} 1&2&3\\ 3&4&5\\ 6&7&8 \end{array}\right)$} \ar[ruu]\ar[rdd]&&   \text{$\left(\begin{array}{cc} 1&2\\ 4&5\\ 7&8 \end{array}\right)$}  \ar[ruu]\ar[rdd]&&  \text{$\left(\begin{array}{ccc} 1&2&3\\ 4&5&6\\ 6&7&8 \end{array}\right)$} \ar[ruu] \ar[rdd] && \text{$\left(\begin{array}{cc} 2&3\\ 4&5\\ 7&8 \end{array}\right)$} \ar[rdd] \ar[ruu] && \text{$\left(\begin{array}{ccc} 1&1&3\\ 2&5&6\\ 4&7&8 \end{array}\right)$}   \\
 \\
&  \ar[r]  \text{$\left(\begin{array}{cc} 1&2\\ 3&5\\ 6&8 \end{array}\right)$}   &  \text{$\left(\begin{array}{ccc} 1&2&2\\ 3&4&5\\ 6&7&8 \end{array}\right)$}  \ar[r]     \ar[rdd]\ar[ruu]&  \ar[r] \text{$\left(\begin{array}{c} 2\\ 4\\ 7 \end{array}\right)$}   & \text{$\left(\begin{array}{ccc} 1&2&3\\ 4&4&5\\ 6&7&8 \end{array}\right)$}  \ar[r]     \ar[rdd]\ar[ruu]&  \ar[r]   \text{$\left(\begin{array}{cc} 1&3\\ 4&5\\6&8 \end{array}\right)$}   & \text{$\left(\begin{array}{ccc} 1&2&3\\ 4&5&5\\ 6&7&8 \end{array}\right)$}  \ar[r]   \ar[rdd]\ar[ruu]& \text{$\left(\begin{array}{c} 2\\ 5\\ 7 \end{array}\right)$} \ar[r]  & \text{$\left(\begin{array}{ccc} 1&2&3\\ 4&5&6\\ 7&7&8 \end{array}\right)$}  \ar[r]  \ar[ruu] \ar[rdd] & \text{$\left(\begin{array}{cc} 1&3\\ 4&6\\ 7&8 \end{array}\right)$} \ar[r] & \text{$\left(\begin{array}{ccc} 1&2&3\\ 4&5&6\\ 7&8&8 \end{array}\right)$}  \ar[ruu] \ar[rdd] \ar[r] & \text{$\left(\begin{array}{c} 2\\ 5\\ 8 \end{array}\right)$}  \\
 \\
&   \text{$\left(\begin{array}{cc} 1&2\\ 3&4\\ 7&8 \end{array}\right)$}  \ar[ruu] \ar[rdd]&&  \text{$\left(\begin{array}{cc} 1&2\\ 4&5\\ 6&8 \end{array}\right)$}  \ar[ruu] \ar[rdd]&&  \text{$\left(\begin{array}{cc} 2&3\\ 4&5\\ 6&7 \end{array}\right)$}    \ar[ruu] \ar[rdd]&&   \text{$\left(\begin{array}{cc} 1&3\\ 4&5\\ 7&8 \end{array}\right)$}  \ar[rdd]  \ar[ruu] && \text{$\left(\begin{array}{cc} 1&2\\ 5&6\\ 7&8 \end{array}\right)$}  \ar[ruu] \ar[rdd] && \text{$\left(\begin{array}{cc} 2&3\\ 4&6\\ 7&8 \end{array}\right)$}  \\
 \\
&&  \text{$\left(\begin{array}{c} 1\\ 4\\ 8 \end{array}\right)$}   \ar[ruu]&& \text{$\left(\begin{array}{c} 2\\ 5\\ 6 \end{array}\right)$} \ar[ruu]&&   \text{$\left(\begin{array}{c} 3\\ 4\\ 7 \end{array}\right)$} \ar[ruu]&&  \text{$\left(\begin{array}{c} 1\\ 5\\ 8 \end{array}\right)$} \ar[ruu] && \text{$\left(\begin{array}{c} 2\\ 6\\ 7 \end{array}\right)$} \ar[ruu] \\
} }
$$
\caption{Auslander--Reiten quiver of $\mathcal{C}_{E_8}$ with vertices labelled by tableaux, Part 3.}
\label{fig: AR quiver E8 part 3 tableaux}
\end{figure}

\clearpage

\end{document}